\theoremstyle{plain}
\newtheorem{theorem}{Theorem}
\newtheorem{proposition}{Proposition}
\newtheorem{lemma}[theorem]{Lemma}
\theoremstyle{definition}
\newtheorem{definition}{Definition}
\theoremstyle{remark}
\newtheorem{remark}{Remark}
\newcommand{\Z}{\mathbb{Z}}
\newcommand{\Q}{\mathbb{Q}}
\newcommand{\N}{\mathbb{N}}
\newcommand{\F}{\mathbb{F}}
\newcommand{\OO}{\mathcal{O}}
\newcommand{\Gal}{\mathrm{Gal}}
\newcommand{\Cl}{\mathrm{Cl}}
\newcommand{\p}{\mathfrak{P}}
\newcommand{\G}{\mathscr{G}}
 \DeclareFontFamily{U}{wncy}{}
    \DeclareFontShape{U}{wncy}{m}{n}{<->wncyr10}{}
    \DeclareSymbolFont{mcy}{U}{wncy}{m}{n}
    \DeclareMathSymbol{\Sh}{\mathord}{mcy}{"58}
\DeclareMathOperator{\Ima}{Im}
\DeclareMathOperator{\Nrm}{Nm}
\DeclareMathOperator{\Hom}{Hom}
\DeclareMathOperator{\Iw}{Iw}
\DeclareMathOperator{\CoInd}{CoInd}
\begin{document}
\title{Iwasawa $\lambda$ invariant and Massey product}
\author{Peikai Qi}
\date{\today}
\address{Michigan State University, East Lansing, Michigan, USA}
\email{qipeikai@msu.edu}
\thanks{Thanks to my advisor Preston Wake for guiding me in learning Iwasawa theory. He suggested the question to me and motivated me to do the research. Thanks for the support of the Department of Mathematics at Michigan State University.}

\begin{abstract}

We compute Iwasawa $\lambda$ invariant in terms of Massey products in Galois cohomology with restricted ramification. When applied to imaginary quadratic fields and cyclotomic fields, we obtain a new proof and generalization of results of Gold \cite{Gold} and McCallum-Sharifi \cite{MR2019977}. The main tool is the generalized Bockstein map introduced by Lam-Liu-Sharifi-Wake-Wang\cite{MR4537772}.
\end{abstract}
\maketitle

\section{Introduction}

\subsection{Background}
Let $K$ be a number field and $K\subset K_1\subset K_2\subset\cdots\subset K_l \subset\cdots K_\infty$ be a $\Z_p$ extension of $K$. Let $X=\varprojlim \Cl(K_l)[p^\infty]$, where $\Cl(K_l)[p^\infty]$ denotes the $p$-part of the class group of $\Cl(K_l)$. Let $\mu$ and $\lambda$ be the Iwasawa invariants of $X$. Our goal is to relate the value of $\lambda$ with the vanishing of Massey products under the assumption that $\mu=0$.

Massey products of Galois cohomology are introduced in number theory to study the structure of Galois groups. We will introduce the definition of Massey products in section \ref{Massey product sect}. One can view Massey products as a generalization of cup products in cohomology and for example, the cup product is a $2$-fold Massey product.

The idea of using Massey products to study Iwasawa theory first appears in Sharifi's paper \cite{MR2312552}. McCallum and Sharifi proved that under some assumptions, we have $\lambda\geq 2$ if and only if a certain cup product vanishes for cyclotomic fields in ~\cite{MR2019977}*{Proposition 4.2}. One can also translate Gold's criterion \cite{Gold} into group cohomology. It also has the form that $\lambda\geq 2$ if and only if a certain cup product vanishes for imaginary quadratic fields under some assumptions. The two results have completely different proof. We want to find the deep reason behind it. Our main theorem unifies these two results and when applied to these cases, we get a generalization for them. Roughly speaking, we proved that under some assumptions, if $\lambda\geq n-1$, then $\lambda \geq n$ if and only if a certain Massey product vanishes.

For readers who are familiar with Massey products, here are some differences one should notice. In J\'an Min\'a\v c and Nguy\~{\^e}n Duy T\^ an's Massey products vanishing conjecture \cite{MR3584563}, Massey products vanishing means that Massey products vanish relative to all defining systems. In our paper, Massey products vanishing means that Massey products vanish relative to a particular defining system, which we call the proper defining system. In addition, they consider the Massey products for absolute Galois groups and we consider the Galois groups with restricted ramifications.

\subsection{The strategy and notations}
The strategy of the project is divided into four steps.

\begin{enumerate}
    \item \textit{The Iwasawa invariant $\lambda$ and $\lambda_{cs}$ }

    Let $S$ be the set of primes of $K$ above $p$ and $X_{cs}=\varprojlim \Cl_S(K_l)[p^\infty]$. Let $\mu_{cs}$ and $\lambda_{cs}$ be the Iwasawa invariant for the Iwasawa module $X_{cs}$. Let $D_l$ be the subgroup of $\Cl(K_l)[p^\infty]$ generated by primes in $S$. Then we have 
    \[
    0\rightarrow \varprojlim D_l\rightarrow X\rightarrow  X_{cs}\rightarrow 0
    \]
    We can relate $\lambda$ with $\lambda_{cs}$ if we know $\varprojlim D_l$.
    \item  \textit{The size of $H^2(G_{K_l,S},\mu_p)$ and $\lambda_{cs}$ }

    Let $K_S$ be the maximal extension of $K$ unramified outside $S$ and $G_{K_l,S}=\Gal(K_S/K_l)$. 
    We have the following exact sequences from Kummer theory:
\[
   0\rightarrow \Cl_S(K_l)/p  \rightarrow H^2(G_{K_l,S},\mu_p)\rightarrow Br(\OO_{K_l}[1/p])[p]\rightarrow 0 
\]
To know the information about $\lambda_{cs}$, we need information about the size of the group $\Cl_S(K_l)[p^\infty]$. Hence, we need information about the size of $H^2(G_{K_l,S},\mu_p)$.
     \item  \textit{The generalized Bockstein map and the size of $H^2(G_{K_l,S},\mu_p)$ }

     By Lam-Liu-Sharifi-Wake-Wang's paper \cite{MR4537772}*{Theorem 2.2.4.}, We have the following formula:
      \[\frac{I^nH_{\Iw}^2G_{K_l,S},\mu_p)}{I^{n+1}H_{\Iw}^2(G_{K_l,S},\mu_p)}\cong \frac{H^2(G_{K,S},\mu_p)}{\Ima\Psi^{(n)}}\]
where $I$ is augmentation ideal of $\F_p[[\Gal(K_l/K)]]$ and $\Psi^{(n)}: H^1(G_{K,S},\mu_p\otimes I^n/I^{n+1})\rightarrow H^2(G_{K,S},\mu_p)$ is the generalized Bockstein map defined in \cite{MR4537772}. We have a filtration $H_{\Iw}^2(G_{K_l,S},\mu_p)\supset I H_{\Iw}^2(G_{K_l,S},\mu_p)\supset I^2H_{\Iw}^2(N,\mu_p)\supset \cdots \supset I^n H_{\Iw}^2(G_{K_l,S},\mu_p)\cdots$. Once we know the size of $H^2(G_{K,S},\mu_p)$ and the size of $\Ima\Psi^{(n)}$, we can determine the filtration and get the information about the size of $H_{\Iw}^2(G_{K_l,S},\mu_p) $.
      \item  \textit{ The generalized Bockstein map and Massey products}

In Lam-Liu-Sharifi-Wake-Wang's paper \cite{MR4537772}, they proved that under some circumstances, the image of generalized Bockstein map $\Ima\Psi^{(n)}$ is spanned by certain $n$-fold Massey products.
    
\end{enumerate}

\subsection{Main theorem and corollaries}
As one can see, our strategy does not involve the Iwasawa Main Conjecture. And the Iwasawa $\lambda$ invariant that we computed is the algebraic Iwasawa $\lambda$ invariant. By following the strategy, one of the main theorems is the following:

\begin{theorem} \label{theorem 1}
    Let $K\subset K_1\subset K_2\subset \cdots \subset K_\infty$ be a $\Z_p$ extension of $K$ and $S$ be the set of primes above $p$ for $K$. Assume all primes in $S$ are totally ramified in $K_\infty/K$. Let $X_{cs}=\varprojlim\Cl_S(K_l)$ and $\mu_{cs}$, $\lambda_{cs}$  be the Iwasawa invariants of $X_{cs} $. Assume $X_{cs} $ has no torsion element and $H^2(G_{K,S},\mu_p)\cong \F_p$. 
    
    Then $\mu_{cs}=0$ if and only if there exists integer $k$ such that $\Psi^{(k)}\neq 0$ for some $k$. If $\mu_{cs}=0$, then 
    \[
    \lambda_{cs}=\min\{n|\Psi^{(n)}\neq 0\}-\#S+1
    \]
\end{theorem}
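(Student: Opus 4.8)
The plan is to transport the entire problem onto the single module $M_l := H^2_{\Iw}(G_{K_l,S},\mu_p)$, regarded as a module over $R_l := \F_p[\Gal(K_l/K)]$. By Shapiro's lemma $M_l \cong H^2(G_{K_l,S},\mu_p)$, and since all primes of $S$ are totally ramified, $\Gal(K_l/K)\cong\Z/p^l$, so writing $t=\sigma-1$ for a generator $\sigma$ one has $R_l\cong\F_p[t]/(t^{p^l})$ with augmentation ideal $I=(t)$ and $I^n=(t^n)$. The cited formula of Lam--Liu--Sharifi--Wake--Wang identifies the graded pieces of the $I$-adic filtration as $I^nM_l/I^{n+1}M_l\cong H^2(G_{K,S},\mu_p)/\Ima\Psi^{(n)}$; because $H^2(G_{K,S},\mu_p)\cong\F_p$ is one-dimensional, each such piece is $\F_p$ when $\Psi^{(n)}=0$ and $0$ when $\Psi^{(n)}\neq 0$. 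Summing over the finite filtration (note $I^{p^l}M_l=0$) yields
\[
\dim_{\F_p}M_l=\#\{\,0\le n\le p^l-1 : \Psi^{(n)}=0\,\}.
\]

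The first key step is a monotonicity observation that controls this count. Since $I^{n+1}M_l=t\cdot I^nM_l$, multiplication by $t$ induces a surjection $I^nM_l/I^{n+1}M_l\twoheadrightarrow I^{n+1}M_l/I^{n+2}M_l$, so the sequence $\dim_{\F_p}(I^nM_l/I^{n+1}M_l)$ is non-increasing in $n$. As each term lies in $\{0,1\}$, the set $\{n:\Psi^{(n)}=0\}$ must be an initial segment $\{0,1,\dots,m-1\}$, where $m:=\min\{n:\Psi^{(n)}\neq 0\}$, with $m=\infty$ if every $\Psi^{(n)}$ vanishes. Consequently $\dim_{\F_p}M_l=p^l$ when all $\Psi^{(n)}$ vanish, and $\dim_{\F_p}M_l=m$ for all $l$ with $p^l>m$ otherwise.

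Next I would convert $\dim_{\F_p}M_l$ into class-group data using the Kummer sequence
\[
0\to \Cl_S(K_l)/p \to H^2(G_{K_l,S},\mu_p)\to Br(\OO_{K_l}[1/p])[p]\to 0.
\]
Total ramification forces the number of primes of $K_l$ above $p$ to be exactly $\#S$, and the class field theory exact sequence for $Br(\OO_{K_l}[1/p])$ (taking $p$ odd, so archimedean places contribute nothing to $p$-torsion) identifies $Br(\OO_{K_l}[1/p])[p]$ with the kernel of the sum-of-invariants map $\bigoplus_{v\mid p}\F_p\to\F_p$, whence $\dim_{\F_p}Br(\OO_{K_l}[1/p])[p]=\#S-1$ uniformly in $l$. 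This gives $\dim_{\F_p}M_l=\dim_{\F_p}\Cl_S(K_l)/p+\#S-1$.

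The final step is Iwasawa-theoretic control. Using total ramification and the no-torsion hypothesis (no nonzero finite $\Lambda$-submodule), the control theorem gives $\Cl_S(K_l)[p^\infty]\cong X_{cs}/\omega_l X_{cs}$ with $\omega_l=(1+T)^{p^l}-1$, so $\dim_{\F_p}\Cl_S(K_l)/p=\dim_{\F_p}(X_{cs}/p)/\overline{\omega_l}(X_{cs}/p)$, where $\overline{\omega_l}=T^{p^l}$ in characteristic $p$. This produces the decisive dichotomy: if $\mu_{cs}>0$ then $X_{cs}/p$ is infinite-dimensional and $\dim_{\F_p}\Cl_S(K_l)/p$ grows like $\mu_{cs}p^l$, while if $\mu_{cs}=0$ then $X_{cs}$ is $\Z_p$-free of rank $\lambda_{cs}$, the operator $T$ acts nilpotently on $X_{cs}/p$, and $T^{p^l}$ annihilates $X_{cs}/p$ for large $l$, giving $\dim_{\F_p}\Cl_S(K_l)/p=\lambda_{cs}$. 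Comparing with the two cases of the filtration count proves both assertions: $\dim_{\F_p}M_l$ is bounded iff some $\Psi^{(k)}\neq 0$, which is exactly $\mu_{cs}=0$; and when $\mu_{cs}=0$, stabilization forces $m=\lambda_{cs}+\#S-1$, i.e.\ $\lambda_{cs}=m-\#S+1$. I expect the main obstacle to be this last paragraph: establishing the precise control isomorphism for the split/ramified $S$-class groups in the totally ramified tower and confirming that the no-torsion hypothesis yields $\Z_p$-freeness, so that $\dim_{\F_p}X_{cs}/p=\lambda_{cs}$ with no spurious contributions. The monotonicity and Brauer computations are comparatively routine once the framework is in place; the delicate point is matching the stabilized generator count of $\Cl_S(K_l)/p$ to $\lambda_{cs}$ while keeping the Brauer correction $\#S-1$ uniform in $l$.
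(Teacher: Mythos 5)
Your strategy is sound and takes a genuinely different route from the paper. The paper (in its proof of Theorem~\ref{main theom}) passes to the inverse limit: it takes $\varprojlim$ of the Kummer sequences, identifies $\varprojlim \Cl_S(K_l)/p = X_{cs}/p \cong \F_p^{\lambda_{cs}}$, computes $\#H^2_{\Iw}(G_{K_\infty,S},\mu_p)=p^m$ with $m=\min\{n \mid \Psi^{(n)}\neq 0\}$ via Lemma~\ref{main lemma} (Nakayama on the Iwasawa module), and reads off $\lambda_{cs}$ using $\varprojlim Br(\OO_{K_l}[1/p])[p]\cong \F_p^{\#S-1}$. You instead count $\F_p$-dimensions at each finite level $l$; your multiplication-by-$t$ surjection $I^nM_l/I^{n+1}M_l\twoheadrightarrow I^{n+1}M_l/I^{n+2}M_l$ replaces Nakayama and gives the stronger ``initial segment'' structure of the vanishing set, and your level-wise Brauer count ($\#S-1$ uniformly in $l$, by total ramification) buys you something real: it bypasses the analysis of the transition maps on $\varprojlim Br(\OO_{K_l}[1/p])[p]$, which is the most delicate point of the paper's limit argument. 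One point you must make explicit, however: the $\Psi^{(n)}$ in the theorem statement is defined with $N=G_{K_\infty,S}$ and $\Omega=\F_p[[\Z_p]]$, whereas your filtration count uses the level-$l$ Bockstein maps with $\Omega_l=\F_p[\Z/p^l\Z]$. These agree for all $n<p^l$ because $(1+x)^{p^l}\equiv 1 \bmod x^{n+1}$, so $\Omega/I^{n+1}\otimes T\cong \Omega_l/I^{n+1}\otimes T$ as $G$-modules compatibly with the defining short exact sequences; without this identification your counts at different levels do not glue into a single $m$ independent of $l$.

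There is one concretely wrong step: the control isomorphism $\Cl_S(K_l)[p^\infty]\cong X_{cs}/\omega_l X_{cs}$ fails in general when $\#S>1$. In the Galois-theoretic derivation, the completely-split condition at the primes above $p$ forces you to kill the full decomposition groups $D_1,\dots,D_{\#S}$, each isomorphic to $\Gamma$; writing $\tilde{\sigma}_i=a_i\tilde{\sigma}_1$ with $a_i\in X_{cs}$ produces extra generators, and the correct statement --- the one the paper itself invokes in Theorem~\ref{main theom small} via Lemma 13.15 of Washington --- is $\Cl_S(K_l)[p^\infty]\cong X_{cs}/\nu_l Y_0$ with $\nu_l=\bigl((1+T)^{p^l}-1\bigr)/T$ and $Y_0=TX_{cs}+\sum_{i\geq 2}\Z_p a_i$. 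Since $\omega_l X_{cs}=\nu_l\, TX_{cs}\subseteq \nu_l Y_0$ with strict inclusion in general, your quotient is too big (the discrepancy vanishes only for $\#S=1$), and the no-torsion hypothesis plays no role in making your version true. Fortunately your dichotomy survives the correction: if $\mu_{cs}=0$ and $X_{cs}$ is $\Z_p$-torsion-free, then $fX_{cs}=0$ with $f\equiv T^{\lambda_{cs}} \bmod p$, so for $p^l-1\geq \lambda_{cs}$ one has $\nu_l Y_0\subseteq T^{p^l-1}X_{cs}+pX_{cs}\subseteq T^{\lambda_{cs}}X_{cs}+pX_{cs}= pX_{cs}$, giving $\dim_{\F_p}\Cl_S(K_l)/p=\dim_{\F_p}X_{cs}/p=\lambda_{cs}$ for large $l$; while if $\mu_{cs}>0$, the surjection $X_{cs}/(pX_{cs}+\nu_l Y_0)\twoheadrightarrow X_{cs}/(pX_{cs}+\nu_l X_{cs})$ together with the positive $\F_p[[T]]$-rank of $X_{cs}/p$ shows the dimensions are unbounded. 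Substituting this corrected control step, your proof closes and yields exactly $m=\lambda_{cs}+\#S-1$.
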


When we have a Galois group $\Delta$  acts on the Iwasawa module  $X=\varprojlim \Cl(K_n)$ and the action gives us a decomposition $X=\oplus_i \varepsilon_i X$, the techniques used to prove Theorem \ref{theorem 1} also apply equivalently to calculate the Iwasawa invariant $\lambda_i$ of $ \varepsilon_i X$. See Theorem \ref{action main theom}.

We applied the Theorem \ref{theorem 1} in many cases in the paper. In the introduction, we list two cases that correspond to the setting of results of Gold \cite{Gold} and McCallum-Sharifi \cite{MR2019977}. 
 
\begin{theorem}
Let $K$ be an imaginary quadratic field and assume that  $p\nmid h_K$, $p$ splits in $K$ as $p\OO_K=\p_0\tilde{\p}_0$. For cyclotomic $\Z_p$ extension, the $\lambda$-invariants of $K$ can be determined in terms of Massey products as follows:

Let $n\geq 2$ and suppose $\lambda\geq n-1$. Then $\lambda\geq n$ if and only if $n$-fold Massey product $(\chi,\chi,\cdots\chi,\alpha)$ is zero with respect to the proper defining system.

Here $\chi$ is a character $\chi: G_{K,S}\rightarrow \Gal(K_\infty/K)\cong\Z_p$ and $\alpha$ is the generator of the principal idea $\p_0^{h_K}$.
\end{theorem}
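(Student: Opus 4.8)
The plan is to deduce this statement from Theorem \ref{theorem 1} (in fact from its $\Gal(K/\Q)$-equivariant refinement, Theorem \ref{action main theom}) by three moves: verifying the hypotheses of Theorem \ref{theorem 1} in this arithmetic situation, computing $\varprojlim D_l$ in order to pass from $\lambda_{cs}$ to the genuine $\lambda$, and finally unwinding the index shift together with the description of $\Ima\Psi^{(n)}$ in terms of Massey products.

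First I would pin down $S$ and check the hypotheses. Since $p$ splits, $S=\{\p_0,\tilde{\p}_0\}$ and $\#S=2$, and both primes are totally ramified in the cyclotomic $\Z_p$-extension because $p$ is totally ramified in $\Q_\infty/\Q$. For the cohomological hypothesis I would use the Kummer-theoretic sequence quoted in the strategy at the base level: because $p\nmid h_K$ we have $\Cl_S(K)/p=0$, so $H^2(G_{K,S},\mu_p)\cong Br(\OO_K[1/p])[p]$. The Brauer group of the ring of $S$-integers sits in the fundamental exact sequence of class field theory; as $K$ is imaginary quadratic its only archimedean place is complex (with trivial local Brauer group) and $S$ consists of two finite places, so $Br(\OO_K[1/p])\cong\ker(\Q/\Z\oplus\Q/\Z\xrightarrow{+}\Q/\Z)\cong\Q/\Z$, whence $H^2(G_{K,S},\mu_p)\cong\F_p$. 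The torsion-freeness of $X_{cs}$ I would obtain from $\mu=0$ (Ferrero--Washington, as $K/\Q$ is abelian) together with the absence of a nonzero finite submodule, which holds because $p$ is totally ramified.

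Next I would compute $\varprojlim D_l$. By class field theory this is the Galois group of the maximal unramified abelian pro-$p$ extension of $K_\infty$ over its subextension in which $S$ splits completely, generated by the decomposition groups of the two primes of $S$; the single global relation coming from $\p_0\tilde{\p}_0=(p)$ cuts the rank down to $\#S-1=1$, so $\varprojlim D_l$ is free of $\Z_p$-rank one and $\lambda=\lambda_{cs}+1$. To locate $\alpha$, note that Kummer theory gives $H^1(G_{K,S},\mu_p)\cong\OO_{K,S}^\times/p$ (again using $\Cl_S(K)[p]=0$), which by the $S$-unit theorem has dimension $\#S=2$; the element $\alpha$ generating $\p_0^{h_K}$ is an $S$-unit and provides the class that feeds the Bockstein map, the $\Gal(K/\Q)$-action (with $\chi$ even and $H^2(G_{K,S},\mu_p)$ odd) singling out precisely this eigencomponent.

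Finally I would assemble. Theorem \ref{theorem 1} with $\#S=2$ gives $\lambda_{cs}=\min\{m\mid\Psi^{(m)}\neq0\}-1$, hence $\lambda=\min\{m\mid\Psi^{(m)}\neq0\}$. Thus $\lambda\geq n$ is equivalent to $\Psi^{(m)}=0$ for all $m\leq n-1$, and under the standing hypothesis $\lambda\geq n-1$ this reduces to the single condition $\Psi^{(n-1)}=0$. By step (4) of the strategy, $\Ima\Psi^{(n-1)}$ is spanned by the $n$-fold Massey product $(\chi,\dots,\chi,\alpha)$ (with $n-1$ copies of $\chi$) relative to the proper defining system, and since $H^2(G_{K,S},\mu_p)\cong\F_p$ is one-dimensional, $\Psi^{(n-1)}=0$ holds exactly when this Massey product vanishes. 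I expect the main obstacle to be the computation of $\varprojlim D_l$ --- establishing both the rank-one count and the torsion-freeness --- together with making the identification of $\Ima\Psi^{(n-1)}$ precise enough to read off the specific entries $(\chi,\dots,\chi,\alpha)$ and the correct defining system in the relevant $\Gal(K/\Q)$-eigenspace.
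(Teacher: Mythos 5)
Your overall architecture matches the paper's proof of Theorem \ref{main1}: verify the hypotheses of Theorem \ref{main theom} (here $\#S=2$, both primes totally ramified, $H^2(G_{K,S},\mu_p)\cong\F_p$ from $\Cl_S(K)/p=0$ and the Brauer-group computation, torsion-freeness of $X_{cs}$ from $\mu=0$ plus the absence of finite $\Lambda$-submodules), establish $\lambda=\lambda_{cs}+1$, and then identify $\Ima\Psi^{(n-1)}$ with a Massey product via Theorems \ref{reduce} and \ref{LLSWW}. But there is one genuine gap. For $\varprojlim D_l$, your argument (``the single global relation $\p_0\tilde{\p}_0=(p)$ cuts the rank down to $\#S-1=1$'') only gives the \emph{upper} bound $\mathrm{rank}_{\Z_p}\varprojlim D_l\le 1$; the substantive content is the lower bound, i.e.\ that $\#D_l\to\infty$, equivalently that the class of $\p_l^{h_K}$ in $\Cl(K_l)$ has order exactly $p^l$. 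You flag this as the main obstacle but do not supply it, and it is not formal: if $\varprojlim D_l$ were zero the index shift would disappear and the Massey-product indexing in the statement would be off by one. The paper gets it from Gold's counting theorem, quoted as Theorem \ref{inj map}: $\#D_m^-=p^{s(m-l)}\#D_l^-$ with $s=1$ and $D_0=0$ (as $p\nmid h_K$), whence $D_l\cong\Z/p^l\Z$ and $\varprojlim D_l\cong\Z_p$. Your write-up should invoke that result (or an equivalent ambiguous-class computation) rather than the decomposition-group heuristic.

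Your second divergence is a genuinely different route that, once completed, works. Since $H^1(G_{K,S},\mu_p)\cong\OO_{K,S}^*/p=\langle\alpha,\tilde{\alpha}\rangle$ is two-dimensional, Theorem \ref{reduce} a priori gives two generators $\Psi^{(n-1)}([\alpha])$ and $\Psi^{(n-1)}([\tilde{\alpha}])$ of $\Ima\Psi^{(n-1)}$, and one must cut down to $\alpha$ alone. The paper does this with the relation $\alpha\tilde{\alpha}=\pm p^{h_K}$ together with the $p$-cyclic Massey vanishing property of $p$ (Theorem \ref{cv in NT}), proved via the norm-compatible cyclotomic units $\eta_l$. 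You instead propose $\Gal(K/\Q)$-equivariance, and this can be made rigorous with the machinery of Section \ref{generalized Bockstein map}: complex conjugation $c$ acts trivially on $I^{n-1}/I^{n}$, sends $[\alpha]$ to $[\tilde{\alpha}]$, and acts by $-1$ on $H^2(G_{K,S},\mu_p)\cong\ker\bigl(\F_p\oplus\F_p\xrightarrow{+}\F_p\bigr)$ because it interchanges the two local factors at $\p_0$ and $\tilde{\p}_0$; the equivariance $\Psi^{(n-1)}(c\phi)=c\,\Psi^{(n-1)}(\phi)$ then yields $\Psi^{(n-1)}([\tilde{\alpha}])=-\Psi^{(n-1)}([\alpha])$, so the image is generated by $\Psi^{(n-1)}([\alpha])$ as required. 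Two caveats: you assert but never verify the sign of $c$ on $H^2$, which is the entire content of the step, and your phrase about the action ``singling out this eigencomponent'' is loose, since $[\alpha]$ itself lies in no eigenspace; what equivariance actually gives is the displayed relation. Granting these, your route avoids the cyclotomic-unit argument altogether, a mild simplification, whereas the paper's argument via Theorem \ref{cv in NT} has the advantage of not depending on the $\Delta$-action at all (and is the mechanism reused in cases 2 and 3).
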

\begin{remark}
    In other words, it means
    \[
    \lambda=\min\{n \mid n \text{ fold Massey products } (\chi,\chi,\cdots\chi,\alpha) \text{ is nonzero}\}
    \]
\end{remark}
\begin{remark}
It is a fact that $\lambda\geq 1$ in the case. Gold's criterion \cite{Gold} said that $\lambda\geq 2\Leftrightarrow \alpha^{p-1}\equiv 1 \mod{\title{\p}_0^2}$. Further calculation shows that $\alpha^{p-1}\equiv 1 \mod{\title{\p}_0^2}\Leftrightarrow \log_p(\alpha)\equiv0\mod{p^2}\Leftrightarrow \chi\cup \alpha=0$, where $\log_p$ is the $p$-adic logarithm. The theorem can be viewed as new proof of Gold's result and a generalization of Gold's result.

\end{remark}

\begin{theorem}
   Let $K=\Q(\mu_p)$ and $\omega: \Gal(\Q(\mu_p)/\Q)\cong (\Z/p\Z)^*\rightarrow \Z_p$ be the Teichm\"{u}ller character. We can decompose the class group $\Cl(K)[p^\infty]$ as 
   \[
  \Cl(K)[p^\infty] = \oplus_{i=0}^{p-2}\varepsilon_i \Cl(K)[p^\infty]\] where $ \varepsilon_i=\frac{1}{p-1}\sum_{a=1}^{p-1}\omega^i(a)\sigma_a^{-1}\in \Z_p[\Gal(\Q(\mu_p)/\Q]$. Let $\lambda_i$ be the $\lambda$ invariant corresponding to $\varepsilon_i \Cl(K_l)[p^\infty]$.

   Fix $i=3,5,\cdots p-2$ and assume that $\varepsilon_i \Cl(K)[p^\infty]$ is cyclic.  Let $n\geq 2$ and suppose $\lambda_i\geq n-1$, then $\lambda_i\geq n$ if and only if $n$-fold Massey product $\varepsilon_i(\chi,\chi,\cdots\chi,\alpha_i)=0$ with respect to the proper defining system.

\end{theorem}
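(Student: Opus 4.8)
The plan is to deduce this from the $\Delta$-equivariant form of Theorem~\ref{theorem 1}, namely Theorem~\ref{action main theom}, applied to $K=\Q(\mu_p)$ with its cyclotomic $\Z_p$-extension $K_\infty=\Q(\mu_{p^\infty})$ and $\Delta=\Gal(K/\Q)\cong(\Z/p\Z)^*$. The first thing to record is the geometry of $S$: the prime $p$ is totally ramified in $K_\infty/K$, with a single prime $\mathfrak{p}_l=(1-\zeta_{p^{l+1}})$ above it in each $K_l=\Q(\mu_{p^{l+1}})$, so $\#S=1$. Since every $\mathfrak{p}_l$ is principal, the subgroup $D_l\subset\Cl(K_l)[p^\infty]$ generated by the primes of $S$ is trivial; hence $\varprojlim D_l=0$ and $X=X_{cs}$, so in particular $\lambda_i=\lambda_{i,cs}$ for every $i$. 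Because $\#S=1$, the normalization $-\#S+1$ in the formula of Theorem~\ref{theorem 1} is $0$, and the target reduces to showing $\lambda_i=\min\{n\mid\varepsilon_i\Psi^{(n)}\neq0\}$.

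Next I would check that the hypotheses of the equivariant theorem hold on the $\varepsilon_i$-component. Total ramification is already noted. For the absence of torsion in $\varepsilon_i X_{cs}$: Ferrero--Washington gives $\mu=0$ for $\Q(\mu_p)$, so each $\varepsilon_i X_{cs}$ is finitely generated over $\Z_p$; and since $\mu=0$ forces the $p^\infty$-torsion submodule to be a finite $\Lambda$-submodule, Iwasawa's theorem (which, using total ramification, rules out nonzero finite submodules) forces $\varepsilon_i X_{cs}$ to be torsion-free over $\Z_p$. For the cohomological hypothesis I use the Kummer sequence
\[
0\rightarrow\Cl_S(K)/p\rightarrow H^2(G_{K,S},\mu_p)\rightarrow Br(\OO_K[1/p])[p]\rightarrow0 .
\]
As $K$ is totally complex and $S$ has a single finite prime, the localization sequence exhibits $Br(\OO_K[1/p])$ as the kernel of the invariant map $Br(K_\mathfrak{p})\to\Q/\Z$, which is an isomorphism, so $Br(\OO_K[1/p])[p]=0$ and $H^2(G_{K,S},\mu_p)\cong\Cl_S(K)/p=\Cl(K)/p$ as $\Delta$-modules. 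Taking $\varepsilon_i$-parts and using that $\varepsilon_i\Cl(K)[p^\infty]$ is cyclic and (under the hypothesis $\lambda_i\geq n-1\geq1$) nonzero yields $\varepsilon_i H^2(G_{K,S},\mu_p)\cong\F_p$, the one-dimensionality required by Theorem~\ref{action main theom}. That theorem then gives $\lambda_i=\min\{n\mid\varepsilon_i\Psi^{(n)}\neq0\}$.

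It remains to rewrite the nonvanishing of the Bockstein map as the Massey product criterion. By Kummer theory there is a $\Delta$-equivariant exact sequence $0\to\OO_{K,S}^\times/p\to H^1(G_{K,S},\mu_p)\to\Cl_S(K)[p]\to0$. The minus part of $\OO_K^\times$ consists only of roots of unity, so for odd $i\neq1$ the $\varepsilon_i$-part of $\OO_{K,S}^\times/p$ vanishes (the extra $S$-unit $1-\zeta_p$ sits in $\varepsilon_0$), whence $\varepsilon_i H^1(G_{K,S},\mu_p)\cong\varepsilon_i\Cl_S(K)[p]\cong\F_p$, one-dimensional with generator $\alpha_i$ the Kummer class attached to a generator of the relevant principal ideal power (the analogue of Gold's $\alpha$). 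The class $\chi$ of the $\Z_p$-extension lies in $\varepsilon_0 H^1(G_{K,S},\F_p)$, as $\Delta$ acts trivially on $\Gal(K_\infty/K)$, so $\chi^{n-1}\cdot\alpha_i$ lands in the $\varepsilon_i$-eigenspace of $H^2$. The domain $\varepsilon_i H^1(G_{K,S},\mu_p\otimes I^{n-1}/I^n)$ of $\varepsilon_i\Psi^{(n-1)}$ is spanned by the datum producing $(\chi,\dots,\chi,\alpha_i)$, and the Massey product description of $\Ima\Psi^{(n-1)}$ in \cite{MR4537772} identifies $\varepsilon_i\Psi^{(n-1)}$ with evaluation of the $n$-fold Massey product $\varepsilon_i(\chi,\dots,\chi,\alpha_i)$ (with $n-1$ copies of $\chi$) against the proper defining system. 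Since $\varepsilon_i H^2\cong\F_p$, this vanishes precisely when the Massey product does. Finally, given $\lambda_i\geq n-1$, i.e.\ $\varepsilon_i\Psi^{(k)}=0$ for $k\leq n-2$, the formula $\lambda_i=\min\{n\mid\varepsilon_i\Psi^{(n)}\neq0\}$ gives $\lambda_i\geq n\iff\varepsilon_i\Psi^{(n-1)}=0\iff\varepsilon_i(\chi,\dots,\chi,\alpha_i)=0$, which is the assertion.

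The main obstacle is this last identification: transporting the Lam--Liu--Sharifi--Wake--Wang Massey product formula for $\Ima\Psi^{(n-1)}$ into the eigenspace-equivariant setting, tracking the Tate twist and the $\varepsilon_i$-bookkeeping so that $\chi^{n-1}\cdot\alpha_i$ genuinely lives in $\varepsilon_i H^2$, and verifying that the defining system implicit in the generalized Bockstein construction is exactly the proper defining system. By contrast, the vanishing of the Brauer contribution, the dimension counts, and the reduction $\lambda_i=\lambda_{i,cs}$ are routine consequences of $\#S=1$ and the cyclicity hypothesis; the essential work is the equivariant Massey product dictionary and the careful $n\leftrightarrow n-1$ indexing.
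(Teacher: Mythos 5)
Your proposal is correct and follows essentially the same route as the paper's proof: apply the $\Delta$-equivariant Theorem~\ref{action main theom} with $k=\Q$, use the two Kummer exact sequences to get $\varepsilon_i H^2(G_{K,S},\mu_p)\cong\F_p$ (via $\#S=1$, so the Brauer term vanishes and $\Cl_S=\Cl$) and $\varepsilon_i H^1(G_{K,S},\mu_p)$ generated by $\alpha_i$, then identify $\varepsilon_i\Psi^{(n-1)}([\alpha_i])$ with the $n$-fold Massey product via Theorem~\ref{reduce}, Theorem~\ref{LLSWW}, and the $\Delta$-equivariance of $\Psi^{(n)}$ from Section~\ref{generalized Bockstein map}. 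The extra steps you supply (Ferrero--Washington for $\mu=0$, absence of finite submodules in the minus part for torsion-freeness of $\varepsilon_i X_{cs}$, and the eigenspace computation of $\OO_{K,S}^\times/p$) are details the paper leaves implicit in its appeal to the standing hypotheses, not a different method.
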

\begin{remark}
   The assumption $\varepsilon_i\Cl(K)[p]=\F_p$ implies that $\lambda_i\geq 1$ in the case. Note that $\varepsilon_i\Cl(K)[p^\infty]$ is cyclic if Vandiver's conjecture holds.  The theorem implies that $\lambda_i\geq 2\Leftrightarrow \varepsilon_i(\chi,\alpha_i)=0\Leftrightarrow\chi\cup \alpha_i=0$. Proposition 4.2 in McCallum and Sharifi's paper \cite{MR2019977} describes a similar result that $\lambda_i\geq 2\Leftrightarrow \chi\cup \alpha_i=0$. The theorem can be viewed as a generalization of McCallum and Sharifi's results. 
\end{remark}
\subsection{Structure of the paper}
In section \ref{Generalized Bockstein map and Massey product}, we discuss the generalized Bockstein map introduced by Lam-Liu-Sharifi-Wake-Wang\cite{MR4537772} and recall the relation of generalized Bockestein map and Massey products. We will also prove that the generalized Bockstein map preserves the group action. In section \ref{Size of $H^2$}, we prove a formula to determine the size of the second cohomology group by the generalized Bockstein map. In section \ref{Applications to number theory}, we prove the main theorem by applying the formula into number theory. We also list four cases in which we apply our theorem to get some interesting results. The first three cases are cyclotomic $\Z_p$ extensions of imaginary quadratic fields. The last case is the cyclotomic $\Z_p$ extension of cyclotomic fields. In the first case, we also develop a numerical criterion to determine $\lambda$, which takes 10 pages. One can skip the numerical criterion for the first reading.

\section{Generalized Bockstein map and Massey product} \label{Generalized Bockstein map and Massey product}
\subsection{Generalized Bockstein map} \label{generalized Bockstein map}
In this section, we recall the definition and properties of the generalized Bockstein map from \cite{MR4537772}.

Let $G$ be a profinite group of finite $p$-cohomological dimension $d$ and $N$ be a closed normal subgroup such that $G/N$ is a finitely generated pro-$p$ quotient. In the paper, we take $G/N\cong \Z_p$ or $\Z/p^l\Z$. However, the definition of generalized Bockstein map works more generally. Let $\Omega =\F_p[[G/N]]$ be the completed group algebra which is a $G$-module in a natural way. Let $\sigma$ be the generator of $G/N$ and $I=<\sigma -1>$  be the augmentation ideal in $\Omega$. For $0\leq n<\#G/N$, we have the following exact sequence of $G-$module:
\[0\rightarrow I^n/I^{n+1} \rightarrow \Omega/I^{n+1}\rightarrow \Omega/I^n\rightarrow 0\]

After taking tensor product with a finite $\F_p[G]-$module $T$, it is still an exact sequence since every module that appeared above is a $\F_p$ module:

\[0\rightarrow  I^n/I^{n+1} \otimes_{\F_p} T\rightarrow \Omega/I^{n+1}\otimes_{\F_p} T\rightarrow \Omega/I^n\otimes_{\F_p} T\rightarrow 0\]
For $0\leq n< \#G/N$, 
define the generalized Bockstein map $\Psi^{(n)}$ to be the connecting map
\[\Psi^{(n)}: H^{d-1}(G, \Omega/I^n\otimes T)\rightarrow H^d(G, I^n/I^{n+1}\otimes T)\cong H^d(G,T)\otimes  I^n/I^{n+1} \]
where the last isomorphism uses the fact that $I^n/I^{n+1}$ is a trivial $G$-module.
We view $\Psi^{(0)}=0$.
Recall the definition of Iwasawa cohomology groups:
\[
H^r_{\Iw}(N,T)=\varprojlim_{N\leq U \trianglelefteq^{\circ} G  } H^r(U,T)
\] where the inverse limit is taken with respect to correstriction maps and $U$ runs over all open normal subgroups of $G$ containing $N$. Notice that if $G/N$ is finite, then $H^i_{\Iw}(N,T)=H^i(N,T)$. 
In Lam-Liu-Sharifi-Wake-Wang's paper, they proved:
\begin{theorem}[Theorem A in LLSWW\cite{MR4537772}]
\label{filtration}
    For each $0\leq n< \#G/N$, there is a canonical isomorphism
    \[ \frac{I^nH_{\Iw}^d(N,T)}{I^{n+1}H_{\Iw}^d(N,T)}\cong \frac{H^d(G,T)\otimes I^n/I^{n+1}}{\Ima\Psi^{(n)}}\]
    of $\F_p$-modules, where $d$ is the $p$-cohomological dimension of $G$.
\end{theorem}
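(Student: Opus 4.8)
The plan is to reduce the Iwasawa cohomology to an inverse limit of ordinary cohomology groups with the finite coefficients $\Omega/I^m\otimes T$, extract short exact sequences from the defining sequence of $\Psi^{(n)}$, and then identify the $I$-adic filtration on the limit with the filtration by kernels of the projection maps. Throughout write $M_m:=H^d(G,\Omega/I^m\otimes T)$. First I would pin down the limit description. The open normal subgroups $U$ with $N\le U\trianglelefteq^{\circ}G$ are cofinally indexed by $l$, with $G/U_l\cong\Z/p^l$ and $\F_p[G/U_l]\cong\Omega/I^{p^l}$; Shapiro's lemma gives $H^d(U_l,T)\cong H^d(G,\CoInd_{U_l}^G T)=M_{p^l}$, with corestriction matching the natural surjection $\Omega/I^{p^{l+1}}\to\Omega/I^{p^l}$. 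Since the $p^l$ are cofinal among all $m$, this yields a canonical identification
\[ H^d_{\Iw}(N,T)\cong\varprojlim_m M_m. \]

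Next I would feed the Bockstein short exact sequence into the long exact cohomology sequence. Because $d$ is the cohomological dimension, $H^{d+1}(G,-)=0$, so the tail attached to $0\to I^n/I^{n+1}\otimes T\to\Omega/I^{n+1}\otimes T\to\Omega/I^n\otimes T\to 0$ reads
\[ H^{d-1}(G,\Omega/I^n\otimes T)\xrightarrow{\Psi^{(n)}}H^d(G,T)\otimes I^n/I^{n+1}\to M_{n+1}\to M_n\to 0, \]
hence the short exact sequences
\[ 0\to\frac{H^d(G,T)\otimes I^n/I^{n+1}}{\Ima\Psi^{(n)}}\to M_{n+1}\to M_n\to 0. \]
In particular all transition maps $M_{n+1}\to M_n$ are surjective.

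Let $H:=H^d_{\Iw}(N,T)=\varprojlim_m M_m$ and let $p_n\colon H\to M_n$ be the projection. I would compare the kernel filtration $F^nH:=\ker p_n$ with the $I$-adic filtration $I^nH$. Surjectivity of the transitions (Mittag--Leffler, using that the $M_m$ are finite in our setting) makes each $p_n$ surjective, so $F^nH/F^{n+1}H\cong\ker(M_{n+1}\to M_n)=\mathrm{coker}\,\Psi^{(n)}$; it then remains to prove $F^nH=I^nH$. Since $G/N$ is procyclic, $I=(t)$ with $t=\sigma-1$ is principal, and multiplication by $t^n$ factors as $\Omega/I^{k+n}\twoheadrightarrow\Omega/I^{k}\xrightarrow{\times t^n}\Omega/I^{k+n}$, where the second map is injective with image $I^n/I^{k+n}$. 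On cohomology this shows $t^n\colon M_{k+n}\to M_{k+n}$ factors as the transition $M_{k+n}\to M_k$ followed by $(\times t^n)_*$, whose image is $\ker(M_{k+n}\to M_n)$. Evaluating on $H$ gives at once $I^nH=t^nH\subseteq\ker p_n=F^nH$, and the reverse inclusion becomes the lifting problem of writing each $y\in F^nH$ as $t^nx$.

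The hard part will be exactly this surjectivity of $t^n$ onto $F^nH$, which is where the inverse-limit analysis is essential. I would organize it as the short exact sequence of inverse systems in $k$,
\[ 0\to\ker(\times t^n)_*\to M_k\xrightarrow{(\times t^n)_*}\ker(M_{k+n}\to M_n)\to 0, \]
and apply $\varprojlim_k$, noting $\varprojlim_k M_k=H$ and $\varprojlim_k\ker(M_{k+n}\to M_n)=\ker p_n=F^nH$. The systems have finite terms, hence satisfy Mittag--Leffler with vanishing $\varprojlim^1$, so $\varprojlim$ is exact and $t^n\colon H\to F^nH$ is surjective; this gives $F^nH\subseteq t^nH=I^nH$ and so $F^nH=I^nH$, yielding the asserted isomorphism $I^nH/I^{n+1}H\cong\mathrm{coker}\,\Psi^{(n)}$. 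The finite case $G/N\cong\Z/p^l$ is identical, with the limits eventually constant and Mittag--Leffler automatic. The only genuine obstacle is the finiteness underlying these limit arguments: in full profinite generality one must either assume $H^d(G,T)$ finite (which forces every $M_m$ finite via the sequences above) or impose the corresponding $\varprojlim^1$-vanishing, and I would isolate this as the standing hypothesis.
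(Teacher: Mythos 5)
The paper does not actually prove this statement: Theorem~\ref{filtration} is imported verbatim from \cite{MR4537772}, and the only related material the paper supplies is the chain Lemma~\ref{action1} $\to$ Proposition~\ref{Tate} $\to$ Lemma~\ref{shapiro}, identifying $H^d_{\Iw}(N,T)\cong H^d(G,\Omega\otimes T)$. So the relevant comparison is with the source's argument, and your proof is a correct but genuinely different rendition of it. The proof in \cite{MR4537772} moves the limit inside cohomology first (exactly the paper's Lemma~\ref{shapiro}, via Tate's Proposition~\ref{Tate}) and then works with the single compact module $\Omega\otimes T$: applying right-exactness of $H^d(G,-)$ to $0\to\Omega\otimes T\xrightarrow{t^n}\Omega\otimes T\to\Omega/I^n\otimes T\to 0$ (using $I^n\cong\Omega$ in the $\Z_p$ case) gives $H^d(G,\Omega\otimes T)/I^n\cong M_n$ in one stroke, and the Bockstein comparison of levels $n$ and $n+1$ finishes. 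You never form $\Omega\otimes T$: you run the same top-degree right-exactness at each finite level, identify the $I$-adic filtration with the kernel filtration $F^nH=\ker p_n$ through the factorization of $\times t^n$ through $\Omega/I^k$, and glue with three Mittag--Leffler arguments. What your version buys is independence from Tate's continuity theorem; what it costs is the limit bookkeeping that the compact-module argument dispenses with. Your closing caveat is also well placed, and the needed finiteness is genuinely available in this paper's setting: $T$ is a finite $\F_p[G]$-module, $M_1=H^d(G,T)$ is finite in the applications, and your own exact sequences $0\to\operatorname{coker}\Psi^{(n)}\to M_{n+1}\to M_n\to 0$ propagate finiteness to every $M_m$ (alternatively, the paper quotes \cite{MR4537772}*{Proposition 2.2.2} for exactly this purpose in the proof of Lemma~\ref{shapiro}). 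Two small glosses, neither fatal: (i) in the finite case $G/N\cong\Z/p^l\Z$ the map $\times t^n\colon\Omega/I^k\to\Omega/I^{k+n}$ is injective only for $k+n\le p^l$, so at the top of the tower you should invoke only the image statement $\Ima(\times t^n)_*=\ker(M_{k+n}\to M_n)$, which does survive there (and the systems stabilize anyway, as you say); (ii) you implicitly use that the Shapiro identifications and corestriction transitions are $\Omega$-equivariant, so that the $I$-adic filtration on $H^d_{\Iw}(N,T)$ in the theorem matches the one on $\varprojlim_m M_m$ --- this deserves a sentence, since the statement being proved concerns the former.
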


For the remaining part of this subsection, we will show that the isomorphism above has a certain equivalence.  We can decompose the cohomological group into a direct sum of eigenspaces with respect to a group action. For each eigenspace, we still have such isomorphism. However, the process of checking that the generalized Bockstein map $\Psi^{(n)}$ preserves the group action is tedious. One can skip the part to Remark \ref{skip1}.  

\begin{lemma}\label{action1}
    Let $G/N\cong\Z_p$ and $U_l$ be the unique open normal subgroup of $G$ containing $N$ such that $G/U_l\cong \Z/p^l\Z$. And $T$ is a finite $\F_p[G]-$module.  Then $\CoInd_{U_l}^G T:=\Hom_{\Z U_l}(\Z G, T)\cong \F_p[G/U_l]\otimes_{\F_p} T$ as $\F_p[G] -$ module. By Shapiro's lemma,
    \[H^r(U_l,T)\cong H^r(G,\F_p[G/U_l]\otimes_{\F_p} T)\]
\end{lemma}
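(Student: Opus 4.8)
The plan is to establish the module isomorphism first and then feed it into Shapiro's lemma, the latter being purely formal. First I would exploit that $U_l$ has finite index $p^l$ in $G$: since $\F_p G$ is then free of finite rank as a left $\F_p U_l$-module, coinduction and induction coincide, so $\CoInd_{U_l}^G T \cong \mathrm{Ind}_{U_l}^G T = \F_p G \otimes_{\F_p U_l} T$. It therefore suffices to produce a $G$-equivariant isomorphism $\F_p G \otimes_{\F_p U_l} T \cong \F_p[G/U_l] \otimes_{\F_p} T$, where the target carries the diagonal $G$-action (through left translation on $G/U_l$ and the given action on $T$).

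The essential point is the untwisting isomorphism, which uses crucially that $T$ is a full $G$-module and not merely a $U_l$-module. I would define $\theta(g \otimes t) = \bar g \otimes g t$, where $\bar g$ denotes the image of $g$ in $G/U_l$, and check three things. First, that $\theta$ is well defined over $\F_p U_l$: for $u \in U_l$ one has $\overline{gu} = \bar g$, so both $gu \otimes t$ and $g \otimes ut$ are sent to $\bar g \otimes gut$. Second, that $\theta$ is $G$-equivariant: both $\theta(g'\cdot(g\otimes t))$ and $g'\cdot\theta(g\otimes t)$ equal $\overline{g'g} \otimes (g'g)t$. Third, that $\theta$ is bijective, which follows because the inverse $\bar g \otimes t \mapsto g \otimes g^{-1}t$ is well defined by the same computation, or simply because $\theta$ is an $\F_p$-linear map between free $\F_p$-modules of equal rank $p^l \cdot \dim_{\F_p} T$. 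Normality of $U_l$ guarantees that $G/U_l$ is a group and that left and right cosets agree, which is what makes the diagonal action on the target the natural one. (This is also an instance of the projection formula $\mathrm{Ind}_{U_l}^G T \cong T \otimes \F_p[G/U_l]$, but the explicit map is more transparent for our later bookkeeping.)

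With the isomorphism $\CoInd_{U_l}^G T \cong \F_p[G/U_l] \otimes_{\F_p} T$ of $\F_p[G]$-modules in hand, the cohomological statement is immediate from Shapiro's lemma, which gives $H^r(U_l, T) \cong H^r(G, \CoInd_{U_l}^G T)$ for all $r$; substituting the computed coinduced module yields $H^r(U_l,T) \cong H^r(G, \F_p[G/U_l]\otimes_{\F_p} T)$. I expect the only real obstacle to be bookkeeping rather than mathematics: one must fix conventions for the left $\F_p U_l$-module structure on $\F_p G$, the induced left $G$-action on the $\Hom$ (respectively tensor) module, and the direction of the $G$-action on $G/U_l$, and then verify that $\theta$ respects them consistently. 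Once these conventions are pinned down, all verifications reduce to the short coset computations indicated above.
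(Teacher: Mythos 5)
Your proposal is correct, but it reaches the isomorphism by a different decomposition than the paper. The paper proceeds in one step: it writes $G=\bigsqcup_{i=1}^{p^l} U_l\sigma_i$ and directly defines $\alpha:\Hom_{\Z U_l}(\Z G,T)\rightarrow \F_p[G/U_l]\otimes_{\F_p}T$ by $\alpha(\phi)=\sum_i \bar\sigma_i^{-1}\otimes \sigma_i^{-1}\phi(\sigma_i)$, then checks by hand independence of the coset representatives, $G$-equivariance, and bijectivity. You instead factor the argument through two standard facts: the identification $\CoInd_{U_l}^G T\cong \mathrm{Ind}_{U_l}^G T=\F_p G\otimes_{\F_p U_l}T$ valid because $U_l$ has finite index (so $\F_p G$ is free of finite rank over $\F_p U_l$), followed by the projection-formula untwisting $\theta(g\otimes t)=\bar g\otimes gt$, which uses that $T$ carries a full $G$-action. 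Composing your two isomorphisms recovers exactly the paper's map: the inverse of the standard induction-to-coinduction map sends $\phi$ to $\sum_i \sigma_i^{-1}\otimes\phi(\sigma_i)$, and applying $\theta$ yields $\sum_i\bar\sigma_i^{-1}\otimes\sigma_i^{-1}\phi(\sigma_i)$. What your route buys is modularity: each step is a citable general lemma, and the equivariance computation for $\theta$ is a one-line coset check rather than the permutation bookkeeping ($\sigma_i g=h_i\sigma_{\delta(i)}$) the paper carries out; what the paper's route buys is a single explicit formula needing no appeal to induction-versus-coinduction, which matters slightly in this profinite setting (the paper is careful, in the remark following Lemma \ref{shapiro}, that these identifications are only being used at finite level $U_l$, never at $N$ itself — your argument respects this since it uses finiteness of $[G:U_l]$, but it is worth saying explicitly). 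Two small caveats: your fallback justification of bijectivity by ``equal $\F_p$-rank'' is insufficient on its own (equal rank does not imply bijectivity) — though your primary argument via the well-defined inverse $\bar g\otimes t\mapsto g\otimes g^{-1}t$ is complete; and normality of $U_l$ is not actually needed for the untwisting step (the projection formula holds for any finite-index subgroup), it is only needed so that $\F_p[G/U_l]$ is a quotient of the group algebra $\Omega$, as required elsewhere in the paper.
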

\begin{proof}
    Let $G=\bigsqcup_{i=1}^{p^l} U_l \sigma_i$ where $\sigma_i$ are  right coset representatives. Now we define a homomorphism $\alpha: \Hom_{\Z U_l}(\Z G, T)\rightarrow \F_p[G/U_l]\otimes_{\F_p} T$ by mapping the element $\phi\in \Hom_{\Z U_l}(\Z G, T)$ to the element $ \sum_{i=1}^{p^l}\bar{\sigma}_i^{-1}\otimes \sigma_i^{-1} \phi(\sigma_i)$, where $\bar{\sigma}_i$ represents the image of $
    \sigma_i$ in the quotient $ G/U_l$. 
    
    First, the map does not depend on the choice of right coset representatives. Let $ h_i\sigma_i$ be another set of right coset representatives where $h_i\in U_l $. then 
    \[\sum_{i=1}^{p^l}\bar{\sigma}_i^{-1}\bar{h}_i^{-1}\otimes \sigma_i^{-1} h_i^{-1}\phi(h_i\sigma_i)= \sum_{i=1}^{p^l}\bar{\sigma}_i^{-1}\otimes \sigma_i^{-1} \phi(\sigma_i)\]
    since $ \phi(h_i\sigma_i)=h_i\phi(\sigma_i)$. 
    
    Second, the map preserves $\F_p[G]$ actions. Recall the action $g\in G$ on $\phi\in \Hom_{\Z U_l}(\Z G, T)$ is \[(g\psi)(x)=\psi(xg).\] The action $g\in G$ on $ \sum_{i=1}^{p^l}\bar{\sigma}_i\otimes t_i\in \F_p[G/U_l]\otimes_{\F_p} T $ is
    \[g(\sum_{i=1}^{p^l}\bar{\sigma}_i\otimes t_i)=\sum_{i=1}^{p^l}\bar{g}\bar{\sigma}_i\otimes g t_i.\] 
    Then \[\alpha(g\phi)=\sum_{i=1}^{p^l}\bar{\sigma}_i^{-1}\otimes \sigma_i^{-1} (g\phi)(\sigma_i )=\sum_{i=1}^{p^l}\bar{\sigma}_i^{-1}\otimes \sigma_i^{-1} \phi(\sigma_i g).\]
    Assume $\sigma_i g=h_i \sigma_{\delta(i)}$, where $\delta$ is a permutation of $1\leq i\leq p^l$.  We have 
    \[\sigma_i^{-1} \phi(\sigma_i g)=\sigma_i^{-1} \phi(h_i \sigma_{\delta(i)}) =\sigma_i^{-1} h_i\phi( \sigma_{\delta(i)})=g \sigma_{\delta(i)}^{-1}\phi( \sigma_{\delta(i)}) \]
    and 
    \[\Bar{\sigma}_i^{-1}=(\overline{h_i\sigma_{\delta(i)}g^{-1}})^{-1}=\bar{g}\bar{\sigma}_{\delta(i)}^{-1}.\] 
    Hence 
    \[
\alpha(g\phi)=\sum_{i=1}^{p^l}\bar{g}\bar{\sigma}_{\delta(i)}^{-1}\otimes g \sigma_{\delta(i)}^{-1}\phi( \sigma_{\delta(i)}) =\sum_{i=1}^{p^l}\bar{g}\bar{\sigma}_i^{-1}\otimes g \sigma_i^{-1} \phi(\sigma_i)\]
    since $\delta$ is a permutation.

    Lastly, easy to see $\alpha$ is bijection or one can write out the inverse map of $\alpha$. Hence $\CoInd_{U_l}^G T$ is isomorphice to $\F_p[G/U_l]\otimes_{\F_p} T$ as $\F_p[G]-$modules.
\end{proof}

Here, we computed $H^r(U_l,T)$. Next, we will compute $H^r_{\Iw}(N,T)$. We need the following propositions due to Tate \cite{MR0429837}. 

\begin{proposition}\label{Tate}
 Suppose $i>0$ and $M=\varprojlim M_l$ where each $M_l$ is a finite discrete $G$-module. If $H^{i-1}(G,M_l)$ is finite for every $l$, then
 \[
 H^i(G,M)=\varprojlim_l H^i(G,M_l)
 \]
\end{proposition}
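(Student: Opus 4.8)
The plan is to compute $H^i(G,M)$ at the level of continuous cochains and to reduce the statement to the vanishing of a single $\varprojlim^1$ term. Write $C^n(G,M_l)$ for the group of continuous (equivalently, since $M_l$ is finite and discrete, locally constant) inhomogeneous $n$-cochains $G^n\to M_l$, and $C^n(G,M)$ for the continuous $n$-cochains into $M=\varprojlim_l M_l$. Since a continuous map from the compact space $G^n$ into an inverse limit is the same datum as a compatible family of continuous maps into the terms $M_l$, the universal property of the inverse limit gives $C^n(G,M)=\varprojlim_l C^n(G,M_l)$, compatibly with the differentials. Thus the continuous cochain complex $C^{\bullet}(G,M)$ is the inverse limit of the tower of complexes $\{C^{\bullet}(G,M_l)\}_l$, and I work throughout with an index set cofinal with $\N$ so that the $\varprojlim^1$ machinery applies.

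First I would invoke the Milnor exact sequence for a tower of cochain complexes: if the degreewise tower $\{C^n(G,M_l)\}_l$ satisfies the Mittag--Leffler condition, then for every $n$ there is a short exact sequence
\[
0\rightarrow \varprojlim_l{}^1 H^{n-1}(G,M_l)\rightarrow H^n\bigl(\varprojlim_l C^{\bullet}(G,M_l)\bigr)\rightarrow \varprojlim_l H^n(G,M_l)\rightarrow 0.
\]
To apply this I must verify the Mittag--Leffler hypothesis on cochains, and here the finiteness of each $M_l$ is decisive. For $l'\geq l$ the image of the transition map $C^n(G,M_{l'})\to C^n(G,M_l)$ is exactly the subgroup of locally constant functions valued in $N_{l'}:=\mathrm{Im}(M_{l'}\to M_l)$, because a locally constant function into the finite set $N_{l'}$ lifts through any set-theoretic section of $M_{l'}\twoheadrightarrow N_{l'}$ (the composite is again locally constant). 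As $l'$ grows, the $N_{l'}$ form a descending chain of subgroups of the finite group $M_l$, hence stabilize; therefore the cochain images stabilize and the tower $\{C^n(G,M_l)\}_l$ is Mittag--Leffler.

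Taking $n=i$ identifies the middle term with $H^i(G,M)$ and reduces the proposition to showing $\varprojlim_l{}^1 H^{i-1}(G,M_l)=0$. This is exactly where the hypothesis that each $H^{i-1}(G,M_l)$ is finite enters: a tower of finite abelian groups indexed by $\N$ is automatically Mittag--Leffler, since the descending chain of images $\mathrm{Im}\bigl(H^{i-1}(G,M_{l'})\to H^{i-1}(G,M_l)\bigr)$ sits inside a finite group and therefore stabilizes, and a Mittag--Leffler tower has vanishing $\varprojlim^1$. Hence the left-hand term vanishes and the right-hand map is an isomorphism, giving $H^i(G,M)\cong\varprojlim_l H^i(G,M_l)$.

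I expect the main obstacle to be the careful justification of the Milnor sequence in the continuous-cochain setting, namely checking that the tower of cochain groups is Mittag--Leffler so that $\varprojlim$ at the chain level computes cohomology correctly; once the finiteness of the $M_l$ is used to pin down the stabilizing images $N_{l'}$, the remainder is formal homological algebra. Alternatively, one may simply cite this as Tate's theorem \cite{MR0429837}.
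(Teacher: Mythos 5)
Your proof is correct, but note that the paper supplies no argument for this proposition at all---it is stated as a known result with only the citation to Tate \cite{MR0429837}---so your write-up is strictly more than what the paper gives, and it is in essence a faithful reconstruction of the standard (Tate's) argument, packaged through the Milnor $\varprojlim^1$ sequence for a tower of cochain complexes. The two places where you do real work are exactly the right ones: (i) the degreewise Mittag--Leffler condition for the towers of continuous cochain groups, which you correctly reduce---via a set-theoretic section of $M_{l'}\twoheadrightarrow N_{l'}$, composition with which preserves local constancy---to the stabilization of the descending chain of images $N_{l'}\subseteq M_l$ inside a finite group; this step is where finiteness of the coefficient modules is genuinely used, and it is needed precisely because the transition maps $M_{l'}\to M_l$ need not be surjective; and (ii) the vanishing of $\varprojlim^1 H^{i-1}(G,M_l)$, which is exactly where the finiteness hypothesis on $H^{i-1}$ enters and is the only role that hypothesis plays. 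Two small remarks: the identification $C^n(G,M)=\varprojlim_l C^n(G,M_l)$ follows from the universal property of the limit topology alone, so the compactness of $G^n$ you invoke is not actually needed; and your caveat about the index set being cofinal with $\N$ is apposite, since the $\varprojlim^1$ formalism requires countable cofinality, which holds in the paper's application (the tower of open subgroups $U_l$ with $G/U_l\cong \Z/p^l\Z$). In short: the argument is complete and correct, and where the paper merely cites, you prove.
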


\begin{lemma}\label{shapiro}
      Let $G/N\cong\Z_p$ and $U_l$ be the unique open normal subgroup of $G$ containing $N$ such that $G/U_l\cong \Z/p^l\Z$. And $T$ is a finite $\F_p[G]-$module.  Then  $H^r_{\Iw}(N,T)\cong H^r(G,\F_p[[G/N]]\otimes_{\F_p} T)$.
\end{lemma}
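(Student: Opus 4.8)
The plan is to combine the explicit Shapiro isomorphism of Lemma \ref{action1} with Tate's inverse-limit formula of Proposition \ref{Tate}, so that the whole argument becomes a controlled passage to the limit over $l$. First I would record that the inverse limit defining $H^r_{\Iw}(N,T)$ may be taken over the cofinal family $\{U_l\}_{l}$: since $G/N\cong\Z_p$, the open normal subgroups $U$ of $G$ with $N\leq U$ correspond exactly to the open subgroups $p^l\Z_p$ of $\Z_p$, so the $U_l$ are cofinal (indeed they are all such $U$), and hence $H^r_{\Iw}(N,T)=\varprojlim_l H^r(U_l,T)$ with transition maps the corestrictions.

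Next I would apply Lemma \ref{action1} to identify $H^r(U_l,T)\cong H^r(G,\F_p[G/U_l]\otimes_{\F_p}T)$ for each $l$. The essential point, and the step I expect to be the main obstacle, is to check that under these isomorphisms the corestriction maps $H^r(U_{l+1},T)\to H^r(U_l,T)$ correspond to the maps $H^r(G,\F_p[G/U_{l+1}]\otimes T)\to H^r(G,\F_p[G/U_l]\otimes T)$ induced by the natural projection $\F_p[G/U_{l+1}]\twoheadrightarrow \F_p[G/U_l]$ of group algebras. This can be verified by tracing corestriction through the explicit map $\alpha$ of Lemma \ref{action1}: corestriction is induced by the $G$-module homomorphism $\CoInd_{U_{l+1}}^G T\to \CoInd_{U_l}^G T$ that sums over a set of coset representatives of $U_l/U_{l+1}$, and a direct computation shows that $\alpha$ carries this map to the coefficient projection. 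The computation is already transparent in degree $0$, where it recovers the norm map $T^{U_{l+1}}\to T^{U_l}$, $t\mapsto\sum_{u\in U_l/U_{l+1}}u\,t$; in general degree I would either give the analogous cochain-level check or invoke the naturality of Shapiro's isomorphism with respect to corestriction.

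Finally I would set $M_l=\F_p[G/U_l]\otimes_{\F_p}T$, a finite discrete $G$-module, and observe that $\varprojlim_l M_l=\F_p[[G/N]]\otimes_{\F_p}T$, because $T$ is finite-dimensional over $\F_p$ and therefore $-\otimes_{\F_p}T$ commutes with the inverse limit defining the completed group algebra. The groups $H^{r-1}(G,M_l)\cong H^{r-1}(U_l,T)$ are finite for the $G$ of interest, so Proposition \ref{Tate} applies and yields $H^r(G,\F_p[[G/N]]\otimes_{\F_p}T)=\varprojlim_l H^r(G,M_l)$. Combining this with the compatible Shapiro isomorphisms of the previous paragraph, $\varprojlim_l H^r(G,M_l)\cong\varprojlim_l H^r(U_l,T)=H^r_{\Iw}(N,T)$, which is exactly the asserted isomorphism. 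Aside from the compatibility of the transition maps, the only other point requiring care is the finiteness hypothesis of Proposition \ref{Tate}, which I would justify (or assume) for the relevant $G$.
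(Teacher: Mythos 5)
Your proposal is correct and follows essentially the same route as the paper's proof: restrict the Iwasawa limit to the cofinal family $\{U_l\}$, apply the Shapiro isomorphism of Lemma \ref{action1} levelwise, pass the limit inside via Proposition \ref{Tate}, and identify $\varprojlim \F_p[G/U_l]\otimes_{\F_p} T$ with $\F_p[[G/N]]\otimes_{\F_p} T$. You are in fact somewhat more careful than the paper, which leaves the compatibility of corestriction with the algebra projections implicit; conversely, the finiteness hypothesis of Proposition \ref{Tate}, which you defer, is settled in the paper by citing that $H^i(U_l,T)\cong H^i(G,\F_p[G/U_l]\otimes_{\F_p} T)$ is finitely generated over the finite ring $\F_p[G/U_l]$, hence finite.
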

\begin{proof}
    By lemma \ref{action1}, 
\begin{equation*}
\begin{split}
          H^r_{\Iw}(N,T)&= \varprojlim_{N\leq U_l \trianglelefteq^{\circ} G  } H^r(U_l,T)\\
          &=\varprojlim_{N\leq U_l \trianglelefteq^{\circ} G  }H^r(G,\F_p[G/U_l]\otimes_{\F_p} T)\\
          &= H^r(G,\varprojlim_{N\leq U_l \trianglelefteq^{\circ} G  }\F_p[G/U_l]\otimes_{\F_p} T)\\
          &= H^r(G,\F_p[[G/N]]\otimes_{\F_p} T).
\end{split}
\end{equation*} 

To prove the third equality is true, we need to check that it satisfies conditions in Proposition \ref{Tate}. First, we have that  $\F_p[G/U_l]\otimes_{\F_p} T$ is finite module. Second, by \cite{MR4537772}*{Proposition 2.2.2}, we know that $H^i(U_l,T)\cong H^i(G,\F_p[G/U_l]\otimes_{\F_p} T)$ is finitely generated $\F_p[G/U_l]$ module for all $i\geq 0$. Since $\F_p[G/U_l]$ is finite, we have that $H^i(G,\F_p[G/U_l]\otimes_{\F_p} T)$ is also finite. 

For the last equality, note that $T$ is a finite dimensional vector space over $F_p$. Hence $ \varprojlim \F_p[G/U_l]\otimes_{\F_p} T\cong \F_p[[G/N]]\otimes_{\F_p} T $ as $F_p$ module. One can see that the isomorphism also preserves $G$ action. Hence it is also a $\F_p[G]$- isomorphism. 

\end{proof}

 \begin{remark}
     We know $\varprojlim\CoInd_{U_l}^G T\coloneqq \varprojlim\Hom_{\Z U_l}(\Z G, T)\cong \F_p[[G/N]]\otimes_{\F_p} T $. But $ \CoInd_{N}^G T\coloneqq \Hom_{\Z N}(\Z G, T)$ may not be isomorphic to $\varprojlim\CoInd_{U_l}^G T$. That is one reason why we separate the proof of lemma \ref{action1} and lemma  \ref{shapiro} and we often write our induced module as $\Omega\otimes T$ instead of $\Hom_{\Z U_l}(\Z G, T).$
 \end{remark}

Let $\G$ be a group containing $G$ and $N$ as normal subgroups such that $\G/G=\Delta$ is an abelian group and $\G/N$ is also abelian and $\G/N\cong \Delta \oplus G/N$, where $G/N$ is a finitely generated pro-$p$ quotient. Let $T$ be a $\F_p[\G]$-module and $\Omega=\F_p[[G/N]]$.  Next, We will define an action of group $\Delta$ on $H^r(G,\Omega\otimes T)$, $H^r(G,I^n/I^{n+1}\otimes T)$, $H^r(G,\Omega/I^n\otimes T)$, and $ H^r(G,T)$. And we will show that the generalized Bockstein map $\Psi^{(n)}$ preserves the action. We take $G/N\cong \Z_p$ or $\Z/p^l\Z$ in our paper. But the setting up works generally. 

Define $\tau\in \G$ acting on $\sum_{i}\bar{\sigma}_i\otimes t_i\in \Omega\otimes T$ as $ \tau(\sum_{i}\bar{\sigma}_i\otimes t_i)=\sum_{i}\bar{\sigma}_i\otimes \tau t_i$. Recall the action $g\in G$ on $\Omega\otimes T$ is $ g(\sum_{i}\bar{\sigma}_i\otimes t_i)=\sum_{i}\bar{g}\bar{\sigma}_i\otimes gt_i$. These two actions have different effects when $\tau \in G\subset \G$. But they have the same effect when $\tau\in N\subset \G$.
For every $\tau\in \G$, we have a group homomorphism  $\alpha: G\rightarrow G, g\rightarrow \tau^{-1}g\tau$ and a module homomorphsim $\beta: \Omega\otimes T\rightarrow \Omega\otimes T, \sum_{i}\bar{\sigma}_i\otimes t_i\rightarrow \tau(\sum_{i}\bar{\sigma}_i\otimes t_i)$. And
\begin{equation*}
    \begin{split}
        \beta(\alpha(g) (\sum_{i}\bar{\sigma}_i\otimes t_i))&=\beta(\sum_{i}(\overline{\tau^{-1}g\tau})\bar{\sigma}_i\otimes \tau^{-1}g\tau t_i)\\
        &=\sum_{i}(\overline{\tau^{-1}g\tau})\bar{\sigma}_i\otimes \tau \tau^{-1}g\tau t_i\\
        &=\sum_{i}\bar{g}\bar{\sigma}_i\otimes g\tau t_i\\
        &=g(\beta(\sum_{i}\bar{\sigma}_i\otimes t_i))
    \end{split}
\end{equation*}

 By the functorial properties of the cohomology groups \cite{milneCFT}*{Chapter 2,p.~66}, we have a homomorphism $H^r(G,\Omega\otimes T)\rightarrow H^r(G,\Omega\otimes T)$. This gives us an action of $\G$ on $ H^r(G,\Omega\otimes T)$.

 We know the action $\tau\in \G$ on $\Omega\otimes T$ and the action $G$ on $\Omega\otimes T$ have the same effect when $\tau\in N$. By a well known fact \cite{milneCFT}*{Example 1.27(d),p.~67}, the induced action  $\tau\in N\subset \G$ on $H^r(G,\Omega\otimes T)$ is trivial. So the action $\G$ on $H^r(G,\Omega\otimes T)$ factors through $\G/N\cong \Delta\oplus G/N$. Hence, we get an action $\Delta$ on $H^r(G,\Omega\otimes T)$ by viewing $\Delta$ as a subgroup of $\G/N$.

Similarly, we can define the action of $\G$ on $\Omega/I^n\otimes T$ and $I^n/I^{n+1}\otimes T$ in the same way. And actions are compatible, i.e.
\[
\begin{tikzcd}
0\arrow[r] 
& I^n/I^{n+1}\otimes T \arrow[r] \arrow[d,"\tau"]&\Omega/I^{n+1}\otimes T\arrow[r] \arrow[d,"\tau"] &\Omega/I^n\otimes T\arrow[r] \arrow[d,"\tau"] &0 \\
0\arrow[r] 
& I^n/I^{n+1}\otimes T \arrow[r] &\Omega/I^{n+1}\otimes T\arrow[r] &\Omega/I^n\otimes T\arrow[r]  &0
\end{tikzcd}\]

Let $\cdots\rightarrow\Z G^{\oplus k}\rightarrow \Z G^{\oplus k-1}\rightarrow\cdots \rightarrow\Z G^{\oplus 2}\rightarrow \Z G\rightarrow \Z \rightarrow 0$ be a $\Z[G]$ projective resolution of $\Z$. Then $(\alpha,\beta)$ defines a homomorphism of complexes
\[
\Hom(\Z G^{\oplus k},\Omega\otimes T)\rightarrow \Hom(\Z G^{\oplus k},\Omega\otimes T), \phi\rightarrow \beta\circ \phi\circ \alpha^k
\]
for any $k$. And it induces a homomorphism between two exact sequences of complexes:
\[
\begin{tikzcd}[sep=tiny,font=\scriptsize
]
0\arrow[r] 
& \Hom(\Z G^{\oplus k},I^n/I^{n+1}\otimes T)\arrow[r] \arrow[d]&\Hom(\Z G^{\oplus k},\Omega/I^{n+1}\otimes T)\arrow[r] \arrow[d] &\Hom(\Z G^{\oplus k},\Omega/I^n\otimes T)\arrow[r] \arrow[d] &0 \\
0\arrow[r] 
& \Hom(\Z G^{\oplus k},I^n/I^{n+1}\otimes T) \arrow[r] &\Hom(\Z G^{\oplus k},\Omega/I^{n+1}\otimes T)\arrow[r] &\Hom(\Z G^{\oplus k},\Omega/I^n\otimes T)\arrow[r]  &0
\end{tikzcd}
\]
By the functorial property of cohomology, we get a homomorphism between two long exact sequences.
\[
\begin{tikzcd}[sep=tiny,font=
\footnotesize
]
\cdots\arrow[r] &H^r(G,\Omega/I^{n+1}\otimes T)\arrow[r] \arrow[d] &H^r(G,\Omega/I^n\otimes T)\arrow[r] \arrow[d] & H^{r+1}(G,I^n/I^{n+1}\otimes T)\arrow[d] \arrow[r] & \cdots\\
\cdots\arrow[r] &H^r(G,\Omega/I^{n+1}\otimes T)\arrow[r] &H^r(G,\Omega/I^n\otimes T)\arrow[r]  & H^{r+1}(G,I^n/I^{n+1}\otimes T) \arrow[r] &\cdots
\end{tikzcd}
 \]
Each column gives an action of $\tau\in \G$ and they are compatible with each other. In particular, since the generalized Bockstein map is the connecting mapping, we have $\Psi^{(n)}(\tau \phi)=\tau \Psi^{(n)}(\phi)$ for any $\phi \in H^{d-1}(G,T\otimes \Omega/I^n)$. As before, all actions factor through $\G/N$. It induces an action of $\Delta$ on the cohomology group. And the generalized Bockstein map $\Psi^{(n)}$ preserves the actions. 
\begin{remark}
    By lemma \ref{action1}, we have $\Hom_{\Z U_l}(\Z G, T)\cong \F_p[G/U_l]\otimes_{\F_p} T$. Corresponding to the action $\tau\in \G$ on $ \F_p[G/U_l]\otimes_{\F_p} T$, the action  $\tau\in \G$ on $\phi\in \Hom_{\Z U_l}(\Z G, T)$ is $(\tau\phi)(g)=\tau\phi(\tau^{-1}g\tau)$.
\end{remark}

 \begin{remark}\label{skip1}
     The action $\Delta$ on the cohomology group is a left action. 
 \end{remark}

 Now we introduce a new notation to express elements in $\Omega$ which will simplify our future calculation. For simplicity, assume  $G/N\cong \Z_p $ or $\Z/p^l\Z$ and $\sigma$ is the generator of $ G/N$. Let $x=\sigma-1$, then $I=<\sigma-1>=<x>$ and $\Omega=\F_p[[G/N]]\cong \F_p[[x]]$ or $\F_p[x]/(x^{p^l})$ with respect to $G/N\cong \Z_p $ or $\Z/p^l\Z$. Elements in $\Omega\otimes T$ can be write as 
 \[\sum_i \sigma^i\otimes t_i=\sum_i(1+x)^i\otimes t_i=\sum_i x^i\otimes \psi_i\] 
 for some $\psi_i\in T$. Let $\chi$ be the group homomorphism $\chi:G\rightarrow G/N\cong \Z_p$ or $\Z/p^l\Z$. Then the action $g\in G$ on $ \Omega\otimes T$ can be writen as
 \[g(\sum_i x^i\otimes \psi_i)=\sum_i (1+x)^{\chi(g)}x^i\otimes g\psi_i=\sum_i (x^i\otimes (\sum_{k=0}^{k=i}\binom{\chi(g)}{k}g \psi_{i-k}))\]
\subsection{Massey product} \label{Massey product sect}
In this section, we recall some facts of Massey products, defining systems\cite{MR4305541}\cite{MR3584563} and proper defining systems\cite{MR4537772}. More reference are \cite{MR202136}\cite{MR385851}

For the classical definition of Massey products in group cohomology. Let $A$ be a commutative ring with trivial $G$ action and discrete topology. By \cite{MR2392026}, we know that inhomogeneous  continuous cochains $\mathcal{C}^.=\bigoplus_{k\geq0}\mathcal{C}^k(G,A) $ form a differential graded algebra over $A$.
It equips with product $\cup$ and differential $d:\mathcal{C}^n\rightarrow \mathcal{C}^{n+1}$ such that: $d(a\cup b)=(d\, a)\cup b +(-1)^ka\cup (d\, b)$ where $a\in \mathcal{C}^k$ and $d^2=0$. We have $H^n(G,A)=\ker\,d^n/\Ima\,d^{n-1}$. 

Let $U_{n+1}(A)$ be the group of $(n+1)\times (n+1)$ upper triangular matrix with diagonal entries equal 1. Let $Z(U_{n+1}(A))$ be the center of $U_{n+1}(A)$. It is a group of matrix whose diagonal entries equal to 1 and other only possible non zero entry is in the spot $(1,n+1)$. Let $\bar{U}_{n+1}(A)=U_{n+1}(A)/Z(U_{n+1}(A))$. 

Let $\chi_1,\chi_2,\cdots,\chi_n$ be $n$ elements in $H^1(G,A)=\Hom(G,A)$. The defining system with respect to $ \chi_1,\chi_2,\cdots,\chi_n$ is a homomorphism $\bar{\rho}: G\rightarrow \bar{U}_{n+1}(A)$ with $\rho_{i,i+1}=\chi_i$, where $\rho_{i,j}$ is the composition of map $\bar{\rho}:G\rightarrow \bar{U}_{n+1}(A)$ with the projection of $\bar{U}_{n+1}(A)$ to its $(i,j)$ entries.  One can check: $\sum_{j=2}^{j=n}\rho_{1,j}(g_1)\rho_{j,n+1}(g_2)$ is a cocyle inside $ \mathcal{C}^2$ which represents an element in $H^2(G,A)$. We denote the element as $ (\chi_1,\chi_2,\cdots,\chi_n)_{\bar{\rho}}$ and call it the Massey product with respect to the defining system $\bar{\rho}$. By \cite{MR385851}, the Massey product $ (\chi_1,\chi_2,\cdots,\chi_n)_{\bar{\rho}}$ vanishing is equivalent to that $\Bar{\rho}$ can be lifted to a homomorphism $\rho: G\rightarrow U_{n+1}(A)$.

Here $\bar{\rho} \in \Hom(G, \bar{U}_{n+1}(A))$ can be think as degree one cocycle in  $\mathcal{C}^1(G,\bar{U}_{n+1}(A))$ with $G$ acting trivially on $\bar{U}_{n+1}(A)$. In general, the action of $G$ on the ring $A$ may not be trivial. We could define defining systems as a degree one cocycle in the same philosophy and it can be applied in general situations. References are \cite{MR4537772}. We directly borrow definitions from Section 3 of \cite{MR4537772} without giving definitions again here.

But our definition of proper defining systems is a little different from the definition in \cite{MR4537772}. We give a new definition here.

\begin{definition}
    Let $\chi\in H^1(G,T_1)$ and $\psi_0\in H^1(G,T_2)$. Then we call the defining system $\bar{\rho}:G\rightarrow \mathcal{U}(\mathcal{A})$ with respect to $\underbrace{\chi,\chi,\cdots.\chi}_\text{n copies},\psi_0$ as proper defining system if $\bar{\rho}$ is of the following forms:
    \[
    \begin{bmatrix}  
    1 & \chi &\binom{\chi}{2}&\binom{\chi}{3}&\binom{\chi}{4}&\cdots & *\\ 
    0 &1& \chi &\binom{\chi}{2}&\binom{\chi}{3}&\cdots & \psi_{n-1}\\
    \vdots& \vdots& \vdots& \vdots& \vdots& \ddots& \vdots\\
   0&0&0& 1&\chi&\binom{\chi}{2}&\psi_2\\
   0&0&0&0&1&\chi&\psi_1\\
   0&0&0&0&0&1&\psi_0\\
   0&0&0&0&0&0&1
    \end{bmatrix}
    \]
\end{definition}
\begin{remark}
    In \cite{MR4537772}, when they define the proper defining system, they first divide the matrix into four blocks that looks like the one in the following Lemma \ref{blocklemma}. And then they fix the up-left block and down-right block and let the up-right block varies. Here, our definition of the proper defining system can be viewed as a special case of them. In our definition, we fix and give a explicit form of the first $(n+1)\times (n+1)$ block and we let the last column varies except $1$ and $\psi_0$.
\end{remark}

The following Lemma \ref{blocklemma} and Remark \ref{remark operation} will only be used in the section of numerical criterion. One can skip if not interested in the numerical criterion. 
\begin{lemma}\label{blocklemma}
    Let $\Bar{\rho}: G\rightarrow \Bar{U}_{m+n}$ be a defining system. We can write the homomorphism $\Bar{\rho}$ as a block matrix:
    \[
    \Bar{\rho}=\begin{bmatrix}
        A_n&\Bar{B}_{n,m}\\
        0& D_m
    \end{bmatrix}
    \]
    where $A_n$ is a $n\times n$ matrix and $D_m $ is a $m\times m$ matrix. $\Bar{B}_{n,m}$ is a $n\times m$ matrix without $(n,m)$-entry. Let $ \Bar{\rho}': G\rightarrow \Bar{U}_{m+n}$ be another defining system with the same first $n$  columns and last $m$ rows as $ \Bar{\rho}$, i.e. 
    \[
    \Bar{\rho}'=\begin{bmatrix}
        A_n&\Bar{B}_{n,m}'\\
        0& D_m
    \end{bmatrix}
    \]

    Then \[
    \begin{bmatrix}
        A_n&\Bar{B}_{n,m}+\Bar{B}_{n,m}'\\
        0& D_m
        \end{bmatrix}
    \]
    is also a defining system.
\end{lemma}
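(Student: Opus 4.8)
The plan is to observe that the homomorphism condition defining $\bar\rho$, once the two diagonal blocks $A_n$ and $D_m$ are fixed, is \emph{additive} in the off-diagonal block, so that the set of admissible off-diagonal blocks is closed under addition. First I would translate the hypotheses into block form. Saying that $\bar\rho$ and $\bar\rho'$ have the same first $n$ columns forces the upper-left block $A_n$ to agree (the lower-left block being $0$ for both), and saying that they have the same last $m$ rows forces the lower-right block $D_m$ to agree; thus $\bar\rho$ and $\bar\rho'$ differ only in $\bar B_{n,m}$, which is otherwise unconstrained relative to one another.

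Next I would unwind the relation $\bar\rho(g_1 g_2)=\bar\rho(g_1)\bar\rho(g_2)$ block by block. Writing $\bar\rho(g)=\bigl[\begin{smallmatrix} A(g) & B(g)\\ 0 & D(g)\end{smallmatrix}\bigr]$ and using the block upper-triangular product rule, the homomorphism identity splits into three families of equations: that $A_n$ and $D_m$ are themselves homomorphisms into the smaller unipotent groups, together with the ``twisted cocycle'' relation $B(g_1g_2)=A(g_1)B(g_2)+B(g_1)D(g_2)$ for the middle block (carrying the appropriate $G$-twist on entries in the non-trivial action setting of \cite{MR4537772}). The crucial point is that, with $A=A_n$ and $D=D_m$ held fixed, the operator $B\mapsto B(g_1g_2)-A(g_1)B(g_2)-B(g_1)D(g_2)$ is additive in $B$; since $\bar B_{n,m}$ and $\bar B'_{n,m}$ each send it to $0$, so does their sum, and the three families of equations for the proposed matrix all hold. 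Hence $\bigl[\begin{smallmatrix} A_n & \bar B_{n,m}+\bar B'_{n,m}\\ 0 & D_m\end{smallmatrix}\bigr]$ is again a homomorphism $G\to\bar U_{m+n}$, i.e.\ a defining system.

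The one point demanding care is the passage to $\bar U_{m+n}=U_{m+n}/Z$: because we quotient by the center, the top-right corner entry is not part of the data (this is the entry omitted from $\bar B_{n,m}$), and the twisted cocycle relation above must be read modulo the center. I would check that this causes no trouble by noting that in an upper-triangular matrix the corner entry of either factor contributes only to the corner entry of the product; hence every non-corner entry of $\bar\rho(g_1)\bar\rho(g_2)$, which is all we must verify, is independent of the discarded corner entries, and additivity survives the quotient. The main obstacle is therefore purely bookkeeping: pinning down the precise twisted form of the middle-block relation in the general $G$-action framework of \cite{MR4537772} and confirming that the center obstructs only the single corner entry. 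Once these are in place, additivity of the middle-block equation yields the conclusion with no further computation.
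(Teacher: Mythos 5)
Your proposal is correct and is precisely the ``trivial calculation of matrices'' that the paper's proof asserts and omits: the block product rule turns the (possibly twisted) cocycle condition $\Bar{\rho}(g_1g_2)=\Bar{\rho}(g_1)\,g_1\Bar{\rho}(g_2)$ into a relation that is additive in the off-diagonal block once $A_n$ and $D_m$ are fixed, and your observation that corner entries of the factors feed only into the corner entry of the product correctly disposes of the quotient by the center. Nothing further is needed.
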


\begin{proof}
    The proof is a trivial calculation of matrices by the definition of defining system. We omit here.
\end{proof}
\begin{remark}\label{remark operation}
    This easy observation gives us a way to generate a new defining system from old defining systems. For proper defining system, let $\Bar{\rho}_n: G\rightarrow \Bar{U}_{n+1} $ be a proper defining system, i.e,
    \[
    \Bar{\rho}_n=\begin{bmatrix}  
    1 & \chi &\binom{\chi}{2}&\binom{\chi}{3}&\binom{\chi}{4}&\cdots & *\\ 
    0 &1& \chi &\binom{\chi}{2}&\binom{\chi}{3}&\cdots & \psi_{n-2}\\
    \vdots& \vdots& \vdots& \vdots& \vdots& \ddots& \vdots\\
   0&0&0& 1&\chi&\binom{\chi}{2}&\psi_2\\
   0&0&0&0&1&\chi&\psi_1\\
   0&0&0&0&0&1&\psi_0\\
   0&0&0&0&0&0&1
    \end{bmatrix}
    \]
One can check that $\Bar{\rho}_{n+m}:G\rightarrow \Bar{U}_{m+n+1}$ induced by $ \Bar{\rho}_n$ is still a proper defining system:
\[\Bar{\rho}_{n+m}=\begin{bmatrix}  
    1 & \chi &\binom{\chi}{2}&\binom{\chi}{3}& 
    \binom{\chi}{4}& \cdots &\cdots & *\\ 
    0 &1& \chi &\binom{\chi}{2}&\binom{\chi}{3}& \cdots
    &\cdots & \psi_{n-2}\\
    \ddots& \ddots& \ddots& \ddots& \ddots& \vdots &\vdots& \vdots\\
    \cdots & 0& 1& \chi& \binom{\chi}{2}& \ldots& \binom{\chi}{m}& \psi_0\\
    0& \cdots& 0 & 1& \chi& \cdots& \binom{\chi}{m-1}& 0\\
    \vdots& \vdots& \vdots& \vdots& \vdots& \vdots &\vdots& \vdots\\
   0&0&0&0& 1&\chi&\binom{\chi}{2}&0\\
   0&0&0&0&0&1&\chi&0\\
   0&0&0&0&0&0&1&0\\
   0&0&0&0&0&0&0&1
    \end{bmatrix}
\]

If we have another proper defining system $ \Bar{\rho}_{n+m}':G\rightarrow \Bar{U}_{n+m+1}$, 
\[
\Bar{\rho}_{n+m}'=\begin{bmatrix}  
    1 & \chi &\binom{\chi}{2}&\binom{\chi}{3}& 
    \binom{\chi}{4}& \cdots &\cdots & *\\ 
    0 &1& \chi &\binom{\chi}{2}&\binom{\chi}{3}& \cdots
    &\cdots & \psi_{n+m-2}'\\
    \ddots& \ddots& \ddots& \ddots& \ddots& \vdots &\vdots& \vdots\\
    \cdots & 0& 1& \chi& \binom{\chi}{2}& \ldots& \binom{\chi}{m}& \psi_{m}'\\
    0& \cdots& 0 & 1& \chi& \cdots& \binom{\chi}{m-1}& \psi_{m-1}'\\
    \vdots& \vdots& \vdots& \vdots& \vdots& \vdots &\vdots& \vdots\\
   0&0&0&0& 1&\chi&\binom{\chi}{2}&\psi_2'\\
   0&0&0&0&0&1&\chi&\psi_1'\\
   0&0&0&0&0&0&1&\psi_0'\\
   0&0&0&0&0&0&0&1
    \end{bmatrix}
\]
Then by Lemma \ref{blocklemma}, we can produce a new proper defining system

\[
\begin{bmatrix}  
    1 & \chi &\binom{\chi}{2}&\binom{\chi}{3}& 
    \binom{\chi}{4}& \cdots &\cdots & *\\ 
    0 &1& \chi &\binom{\chi}{2}&\binom{\chi}{3}& \cdots
    &\cdots & \psi_{n+m-2}'+\psi_{n-2}\\
    \ddots& \ddots& \ddots& \ddots& \ddots& \vdots &\vdots& \vdots\\
    \cdots & 0& 1& \chi& \binom{\chi}{2}& \ldots& \binom{\chi}{m}& \psi_{m}'+\psi_0\\
    0& \cdots& 0 & 1& \chi& \cdots& \binom{\chi}{m-1}& \psi_{m-1}'\\
    \vdots& \vdots& \vdots& \vdots& \vdots& \vdots &\vdots& \vdots\\
   0&0&0&0& 1&\chi&\binom{\chi}{2}&\psi_2'\\
   0&0&0&0&0&1&\chi&\psi_1'\\
   0&0&0&0&0&0&1&\psi_0'\\
   0&0&0&0&0&0&0&1
    \end{bmatrix}
\]
\end{remark}

\vskip3mm

The image of generalized Bockstein map $\Ima\Psi^{(n)}$ is spanned by certain Massey products relative to a proper defining system. The next theorem is just a special case of theorem 4.3.1 in \cite{MR4537772}.
\begin{theorem}[ LLSWW\cite{MR4537772}] \label{LLSWW}
    Let $G$ be a group with $p$-cohomological dimension $2$. View $ \Omega/I^n\otimes T$ as a quotient of polynomial ring in terms of variable $x$ with coefficient in $T$, Let $f(\sigma)=\psi_0(\sigma)+\psi_1(\sigma)x+\cdots +\psi_{n-1}(\sigma)x^{n-1}$ be a cocycle in $\mathcal{C}^1(G, \Omega/I^n \otimes T)$, where $\psi_i$ is a cochain in $\mathcal{C}^1(G,T)$. Then $\Psi^{(n)}(f)=(\sum_{i=1}^{n}\binom{\chi}{i}\cup\psi_{n-i})x^n$, where $\chi$ is the quotient map $\chi: G\rightarrow G/N\cong \Z_p $ or $\Z/p^l\Z$. And easy to see, $\sum_{i=1}^{n}\binom{\chi}{i}\cup\psi_{n-i}$ is Massey product $(\chi^{(n)},\psi_0)$ relative to the proper defining system. Here $\chi^{(n)}$ denotes $n$- copies of $\chi$. More precisely, the proper defining system is:
    \[ \begin{bmatrix}  
    1 & \chi &\binom{\chi}{2}&\binom{\chi}{3}&\binom{\chi}{4}&\cdots & *\\ 
    0 &1& \chi &\binom{\chi}{2}&\binom{\chi}{3}&\cdots & \psi_{n-1}\\
    \vdots& \vdots& \vdots& \vdots& \vdots& \ddots& \vdots\\
   0&0&0& 1&\chi&\binom{\chi}{2}&\psi_2\\
   0&0&0&0&1&\chi&\psi_1\\
   0&0&0&0&0&1&\psi_0\\
   0&0&0&0&0&0&1
    \end{bmatrix}\]
\end{theorem}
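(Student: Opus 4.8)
The plan is to prove the two assertions separately: first, that the connecting homomorphism $\Psi^{(n)}$ applied to $f$ has the stated explicit form $\left(\sum_{i=1}^{n}\binom{\chi}{i}\cup\psi_{n-i}\right)x^n$; and second, that this cocycle is exactly the Massey product cocycle attached to the displayed proper defining system. The first step is a direct cochain-level computation of the connecting map, and the second is a matching of the Massey product formula against the matrix entries, together with a verification that the matrix really is a defining system.

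For the first step, I would compute $\Psi^{(n)}(f)$ straight from its definition as the connecting map of
\[0\rightarrow I^n/I^{n+1}\otimes T \rightarrow \Omega/I^{n+1}\otimes T \rightarrow \Omega/I^n\otimes T\rightarrow 0.\]
Since the surjection $\Omega/I^{n+1}\otimes T\twoheadrightarrow \Omega/I^n\otimes T$ is truncation of the $x$-polynomial at degree $n$, the tautological lift of $f(\sigma)=\sum_{i=0}^{n-1}\psi_i(\sigma) x^i$ to $\mathcal{C}^1(G,\Omega/I^{n+1}\otimes T)$ is the same polynomial $\tilde f$ with vanishing $x^n$-coefficient. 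I would then apply the differential $(d\tilde f)(g_1,g_2)=g_1\tilde f(g_2)-\tilde f(g_1g_2)+\tilde f(g_1)$, using the explicit action $g(x^i\otimes\psi)=\sum_{k\geq 0}\binom{\chi(g)}{k}x^{i+k}\otimes g\psi$ recorded at the end of Section \ref{generalized Bockstein map}. Reducing modulo $I^n$ gives $df=0$, so $d\tilde f$ indeed lands in $I^n/I^{n+1}\otimes T$; the only surviving term is the $x^n$-coefficient, and collecting the contributions with $i+k=n$ coming from $g_1\tilde f(g_2)$ yields $\sum_{i=1}^{n}\binom{\chi(g_1)}{i}\,g_1\psi_{n-i}(g_2)$. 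Under the canonical identification $I^n/I^{n+1}\otimes T\cong T$ (the module $I^n/I^{n+1}$ being trivial and one-dimensional over $\F_p$, generated by $x^n$), this is precisely $\left(\sum_{i=1}^{n}\binom{\chi}{i}\cup\psi_{n-i}\right)x^n$, which is the asserted formula.

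For the second step, I would recall that for a defining system $\bar\rho:G\to\bar{U}_{n+2}$ the Massey product cocycle is $(g_1,g_2)\mapsto\sum_{j=2}^{n+1}\rho_{1,j}(g_1)\cdot g_1\big(\rho_{j,n+2}(g_2)\big)$. Reading off the displayed matrix, the upper block entries are $\rho_{i,j}=\binom{\chi}{j-i}$ and the last column is $\rho_{j,n+2}=\psi_{n+1-j}$ for $2\leq j\leq n+1$; substituting $\rho_{1,j}=\binom{\chi}{j-1}$ and reindexing by $i=j-1$ recovers exactly $\sum_{i=1}^{n}\binom{\chi}{i}\cup\psi_{n-i}$, matching the output of $\Psi^{(n)}$. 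Before this is meaningful I must check that the matrix is a genuine homomorphism into $\bar{U}_{n+2}$: the $\binom{\chi}{\bullet}$-block is multiplicative because $\chi(g_1g_2)=\chi(g_1)+\chi(g_2)$ together with the Vandermonde identity $\binom{X+Y}{k}=\sum_{a+b=k}\binom{X}{a}\binom{Y}{b}$ (equivalently, it is the matrix of multiplication by $(1+x)^{\chi(g)}$ on $\Omega/I^{n+1}$), and the relations imposed on the last column by the homomorphism property are precisely the cocycle condition $df=0$ on $f$, which is our hypothesis.

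The main obstacle I anticipate is bookkeeping rather than conceptual: keeping the module-twisting in the cup products and in the matrix-multiplication relations consistent with the left $G$-action conventions fixed earlier, and confirming that the homomorphism relations for the last column of the matrix are exactly equivalent to $f\in\mathcal{C}^1(G,\Omega/I^n\otimes T)$ being a cocycle. Tracking possible signs in the connecting map and verifying that the identification $H^2(G,I^n/I^{n+1}\otimes T)\cong H^2(G,T)\otimes I^n/I^{n+1}$ is the one used in Theorem \ref{filtration} are the remaining points of care; since everything is computed on explicit inhomogeneous cochains, these should be routine, but they must be carried out carefully to land on the formula with the stated normalization.
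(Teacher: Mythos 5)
Your proposal is correct, but note that the paper itself gives no proof of this statement: it is quoted as a special case of Theorem 4.3.1 of \cite{MR4537772}, so your argument should be compared with that source rather than with anything in this paper, and what you propose is essentially the LLSWW proof specialized to a cyclic quotient $G/N$. Your first step is exactly right: lifting $f$ tautologically to $\tilde f$ with vanishing $x^n$-coefficient, noting that $d\tilde f$ lands in $I^n/I^{n+1}\otimes T$ because $df=0$, and extracting the $x^n$-coefficient of $g_1\tilde f(g_2)$ via the action $g(x^i\otimes t)=\sum_{k}\binom{\chi(g)}{k}x^{i+k}\otimes gt$ yields $\sum_{i=1}^{n}\binom{\chi(g_1)}{i}\,g_1\psi_{n-i}(g_2)$, which matches the convention $\bigl(\binom{\chi}{i}\cup\psi_{n-i}\bigr)(g_1,g_2)=\binom{\chi(g_1)}{i}\,g_1\psi_{n-i}(g_2)$ recorded in the remark after the theorem. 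Your second step also checks out concretely: in $\bar{U}_{n+2}$ the displayed matrix has $\rho_{j,k}=\binom{\chi}{k-j}$ in the upper block and $\rho_{j,n+2}=\psi_{n+1-j}$ for $2\le j\le n+1$, multiplicativity of the block is the Vandermonde identity (equivalently, it is the matrix of multiplication by $(1+x)^{\chi(g)}$ on $\Omega/I^{n+1}$), the $(1,n+2)$ spot is quotiented out so imposes no relation, and the remaining last-column relations $\psi_m(g_1g_2)=\psi_m(g_1)+\sum_{i=0}^{m}\binom{\chi(g_1)}{i}\,g_1\psi_{m-i}(g_2)$ for $0\le m\le n-1$ are precisely the coefficientwise cocycle condition for $f$ — so the equivalence you flag as the main bookkeeping risk does hold, provided the homomorphism property is taken in the twisted cocycle sense $\rho(g_1g_2)=\rho(g_1)\,g_1\rho(g_2)$, as the paper's remark stipulates. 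The one residual caveat is the connecting-map sign convention, which may differ from LLSWW's normalization by a global sign; this is immaterial for every use of the theorem in this paper, since only the vanishing of $\Psi^{(n)}$ is ever invoked. What your route buys over the paper's citation is a self-contained verification adapted to the specific $\Omega=\F_p[[G/N]]$ at hand; what the citation buys is the general framework of \cite{MR4537772}, which handles higher cohomological degree and more general coefficient quotients than the rank-one case needed here.
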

\begin{remark}
    We have $ (\binom{\chi}{i}\cup\psi_{n-i})(g_1,g_2)=\binom{\chi(g_1)}{i}\cup g_1\psi_{n-i}(g_2)$. And this is compatible with our action of $G$ on $\Omega\otimes T$. And this is compatible with definition of defining system $\rho$ to be a cocycle: $\rho(g_1g_2)=\rho(g_1)g_1\rho(g_2)$
\end{remark}

\section{Size of $H^2$} \label{Size of $H^2$}

The philosophy of the strategy is to use Massey products to analyze the size of $H^2$. We first prove some lemmas we will use later. Since they all can be derived purely from group cohomology theory. We put them in this section. From now, we assume $G$ has $p$ cohomological dimension $d=2$.
\begin{lemma}\label{main lemma}
Let $G$ be a profinite group with $p$ cohomological dimension equal 2, $N$ be a closed normal subgroup such that $G/N\cong \Z_p$ or $\Z/p^l\Z$. And $T$ is a $\F_p[G]$ module. Assume $H^2(G,T)\cong \F_p$ and $\Psi^{(k)}\neq 0$ for some $ 0<k< \#G/N$, then $\#H_{\Iw}^2(N,T)=p^n$ where $n=\min\{n|\Psi^{(n)}\neq 0\}$  
\end{lemma}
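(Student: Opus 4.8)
The plan is to compute $\#H_{\Iw}^2(N,T)$ by reading off the graded pieces of the $I$-adic filtration on $H_{\Iw}^2(N,T)$, using Theorem \ref{filtration} to identify each piece and a Nakayama-type argument to show that the filtration terminates at step $n$.

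First I would determine each graded piece. Since $G/N\cong\Z_p$ or $\Z/p^l\Z$, we have $\Omega\cong\F_p[[x]]$ or $\F_p[x]/(x^{p^l})$, so for every $0\le m<\#G/N$ the quotient $I^m/I^{m+1}$ is one-dimensional over $\F_p$ (spanned by $x^m$); combined with $H^2(G,T)\cong\F_p$ this gives $H^2(G,T)\otimes I^m/I^{m+1}\cong\F_p$. Feeding this into Theorem \ref{filtration} yields
\[\frac{I^mH_{\Iw}^2(N,T)}{I^{m+1}H_{\Iw}^2(N,T)}\cong\frac{\F_p}{\Ima\Psi^{(m)}},\]
and since $\Ima\Psi^{(m)}$ is a subspace of a one-dimensional $\F_p$-vector space, this graded piece is $\F_p$ when $\Psi^{(m)}=0$ and $0$ when $\Psi^{(m)}\ne0$. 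Writing $n=\min\{m\mid\Psi^{(m)}\ne0\}$, which exists with $0<n<\#G/N$ by hypothesis since $\Psi^{(0)}=0$, minimality forces the $n$ pieces indexed by $0\le m<n$ to each be $\F_p$, while at $m=n$ the isomorphism collapses to $I^nH_{\Iw}^2=I^{n+1}H_{\Iw}^2$.

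It then remains to upgrade the relation $I^nH_{\Iw}^2=I^{n+1}H_{\Iw}^2$, i.e. $M=IM$ for $M=I^nH_{\Iw}^2(N,T)$, to $M=0$; once this is done the filtration reads $H_{\Iw}^2(N,T)\supsetneq IH_{\Iw}^2\supsetneq\cdots\supsetneq I^nH_{\Iw}^2=0$ with exactly $n$ successive quotients of order $p$, so that $\#H_{\Iw}^2(N,T)=p^n$ as claimed. When $G/N\cong\Z/p^l\Z$ this is immediate, since $I$ is nilpotent and so $M=IM=I^2M=\cdots=0$. The genuine obstacle is the case $G/N\cong\Z_p$, where $\Omega\cong\F_p[[x]]$ and $I=(x)$ is not nilpotent, so $M=xM$ does not by itself force $M=0$ (for a general module $\F_p((x))$ is a counterexample). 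Here I would first apply Theorem \ref{filtration} at $m=0$ to get $H_{\Iw}^2/IH_{\Iw}^2\cong\F_p$, then invoke the topological Nakayama lemma for the compact $\Omega$-module $H_{\Iw}^2(N,T)=\varprojlim_l H^2(U_l,T)$ to conclude that it is cyclic, hence finitely generated over the Noetherian complete local ring $\Omega$; ordinary Nakayama applied to the submodule $M$ then gives $M=0$. Pinning down this compactness and finite-generation input cleanly is the step that will demand the most care.
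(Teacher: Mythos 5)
Your argument is correct and matches the paper's proof essentially step for step: Theorem \ref{filtration} identifies each graded piece $I^mH^2_{\Iw}(N,T)/I^{m+1}H^2_{\Iw}(N,T)$ as $\F_p$ for $m<n$ and as $0$ at $m=n$, Nakayama upgrades $I^nH^2_{\Iw}(N,T)=I^{n+1}H^2_{\Iw}(N,T)$ to $I^nH^2_{\Iw}(N,T)=0$, and counting the $n$ successive quotients of order $p$ gives $\#H^2_{\Iw}(N,T)=p^n$. The only difference is that the paper invokes Nakayama's lemma in a single line, whereas you carefully justify its applicability in the $G/N\cong\Z_p$ case (where $I$ is not nilpotent) via compactness and topological Nakayama --- equivalently one could cite the finite generation of $H^2_{\Iw}(N,T)\cong H^2(G,\Omega\otimes T)$ over $\Omega$, which the paper already uses in the proof of Lemma \ref{shapiro} --- a detail the paper leaves implicit.
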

\begin{proof}
Take $ n=\min\{n\in \N|\Psi^{(n)}\neq 0\}$, then $\Psi^{(n)}\neq 0 $.  We have $H^2(G,T)\cong \F_p\cong \Ima\Psi^{(n)}$. By theorem \ref{filtration}, $I^n H_{\Iw}^2(N,T)=I^{n+1}H_{\Iw}^2(N,T)$. By Nakayama's lemma, $ I^n H_{\Iw}^2(N,T)$ $=0$. We have the filtration $ H_{\Iw}^2(N,T)\supset I H_{\Iw}^2(N,T)\supset I^2H_{\Iw}^2(N,T)\supset \cdots \supset I^n H_{\Iw}^2(N,T)=0$ and when $i<n$, $ \frac{I^i H_{\Iw}^2(N,T)}{I^{i+1}H_{\Iw}^2(N,T)}=\F_p$. Hence $\#H^2(N,T)=p^n $.
\end{proof}
\begin{theorem}\label{reduce}
    Let $G$ be a profinite group, $N$ be a closed normal subgroup such that $G/N\cong \Z_p$ or $\Z/p^l\Z$ and $T$ be a $\F_p[G]$ module. Fix an integer $n<\#G/N$. Assume the generalized Bockstein map $\Psi^{(i)}=0$ for all $0\leq i<n$, then the value $\Psi^{(n)}(f)$,  where $f(\sigma)=\psi_0(\sigma)+\psi_1(\sigma)x+\cdots +\psi_{n-1}(\sigma)x^{n-1}$ is a cocycle in $\mathcal{C}^1(G, \Omega/I^n\otimes T)$, only depends on the cohomology class of $\psi_0$ and does not depend on other coefficients $\psi_i$, $0<i<n$. 
\end{theorem}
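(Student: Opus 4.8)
The plan is to combine the explicit Massey-product formula of Theorem \ref{LLSWW} with the additivity of the connecting homomorphism, reducing the statement to two independent perturbations of the cocycle $f=\psi_0+\psi_1 x+\cdots+\psi_{n-1}x^{n-1}$: (i) altering the higher coefficients $\psi_1,\dots,\psi_{n-1}$ while keeping the cochain $\psi_0$ fixed, and (ii) replacing $\psi_0$ by a cohomologous cocycle. Since $\Psi^{(n)}$ is the connecting map of the short exact sequence $0\to I^n/I^{n+1}\otimes T\to \Omega/I^{n+1}\otimes T\to \Omega/I^n\otimes T\to 0$, it is well defined and additive on cohomology classes, so it suffices to show that each of these two perturbations leaves the class $\Psi^{(n)}(f)\in H^2(G,I^n/I^{n+1}\otimes T)$ unchanged.

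For (i), let $f'$ be another cocycle with the same constant cochain $\psi_0$ and set $h=f-f'$. Then $h\in Z^1(G,\Omega/I^n\otimes T)$ takes values in the $G$-submodule $I/I^n\otimes T$, i.e.\ it has zero constant term. The key observation is that multiplication by $x$ gives a $G$-equivariant isomorphism $m_x\colon \Omega/I^{n-1}\otimes T\xrightarrow{\ \sim\ } I/I^n\otimes T$: it commutes with the twisted action because multiplication by $x$ commutes with multiplication by $(1+x)^{\chi(g)}$ in $\Omega$ and with the action on $T$. Moreover $m_x$ upgrades to a morphism from the short exact sequence defining $\Psi^{(n-1)}$ to the one defining $\Psi^{(n)}$, the three vertical maps all being multiplication by $x$; one checks directly that the two resulting squares commute. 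Writing $\tilde h=m_x^{-1}(h)\in Z^1(G,\Omega/I^{n-1}\otimes T)$, naturality of the connecting homomorphism then gives $\Psi^{(n)}([h])=(m_x)_*\,\Psi^{(n-1)}([\tilde h])$. Since $\Psi^{(n-1)}=0$ by hypothesis, this vanishes, and hence $\Psi^{(n)}(f)=\Psi^{(n)}(f')$.

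For (ii), suppose $\psi_0'=\psi_0+dt$ for a $0$-cochain $t\in T$. A direct computation of the coboundary $d(1\otimes t)\in\mathcal{C}^1(G,\Omega/I^n\otimes T)$ shows that its constant term is exactly $dt$, its coefficient of $x^k$ for $k\geq 1$ being $g\mapsto \binom{\chi(g)}{k}gt$. Thus $f'':=f+d(1\otimes t)$ is cohomologous to $f$ and has constant cochain $\psi_0'$; since $\Psi^{(n)}$ depends only on the cohomology class, $\Psi^{(n)}(f'')=\Psi^{(n)}(f)$. Combining the two steps: given any two cocycles whose constant terms represent the same class $[\psi_0]$, one first adjusts the constant term by the coboundary above without changing $\Psi^{(n)}$ (step (ii)), and then applies step (i) to conclude that the two values of $\Psi^{(n)}$ coincide.

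I expect the main obstacle to be step (i): the substance is to verify that $m_x$ is $G$-equivariant for the twisted action and that it induces a genuine morphism of the two defining short exact sequences, so that naturality of the connecting map applies. Once this is set up, the hypothesis $\Psi^{(n-1)}=0$ (a consequence of the assumption $\Psi^{(i)}=0$ for all $0\leq i<n$) immediately annihilates the class. The explicit formula of Theorem \ref{LLSWW} serves mainly to identify $\Psi^{(n)}(f)$ concretely and to keep the constant-term bookkeeping transparent; the real mechanism is the interplay between multiplication by $x$ and the connecting homomorphism.
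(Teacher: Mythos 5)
Your proof is correct and takes essentially the same route as the paper: your coboundary $d(1\otimes t)$ is precisely the paper's auxiliary cocycle $\delta=(\psi_0-\psi_0')+(\chi\cup m)x+\cdots+\left(\binom{\chi}{n-1}\cup m\right)x^{n-1}$ (the paper verifies $\Psi^{(n)}(\delta)=0$ by direct computation where you invoke coboundary invariance), and your step (i) is exactly the paper's identification of $I/I^n$ with $\Omega/I^{n-1}$ via multiplication by $x$, giving $\Psi^{(n)}(f-f')=\Psi^{(n-1)}\bigl((f-f')/x\bigr)=0$. The only difference is that you spell out the $G$-equivariance of $m_x$ and the naturality of the connecting map, which the paper leaves implicit.
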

\begin{proof}
    Let $ f'=\psi_0'+\psi_1'x+\cdots +\psi_{n-1}'x^{n-1}$ be another cocycle in $\mathcal{C}^1(G, \Omega/I^n \otimes T)$ such that $\psi_0$ and $\psi_0'$ are in the same cohomology class. Then $(\psi_0-\psi_0')(\sigma)=\sigma m-m $ for some $m\in T$. Let $\delta= (\psi_0-\psi_0')+(\chi\cup m)x+ (\binom{\chi}{2}\cup m)x^2+\cdots + (\binom{\chi}{n-1}\cup m)x^{n-1}$. One can check that $\Psi^{(n)}(\delta)=d(\binom{\chi}{n}\cup m) x^n =0 \in H^2(G,T)\otimes I^n/I^{n+1}$. By adding $\delta$ to $f'$, we can assume that $\psi_0'=\psi_0$. Then $f-f'\in \mathcal{C}^1(G,I/I^n\otimes T )$. Since $\Omega/I^{n-1}\cong \F_p[x]/(x^{n-1})$ is isomorphic to $I/I^n\cong (x)/(x^{n})$ by a multiplication of $x$, $\Psi^{(n)}(f-f')=\Psi^{(n-1)}(\frac{f-f'}{x})=0$ by assumption. Hence $\Psi^{(n)}(f)$ only depends on the cohomology class of $\psi_0$.   
\end{proof}
Under the conditions  $\Psi^{(i)}=0$ for all $0\leq i<n$, for convenience, we use $\Psi^{(n)}([\psi_0]) $ to denote $\Psi^{(n)}(\psi_0+\psi_1x+\cdots +\psi_{n-1}x^{n-1})$ since the value only depends on the cohomology class of $\psi_0$. And in the language of Massey product, $\Psi^{(n)}([\psi_0]) $ equals to the Massey product $(\chi^{(n)},\psi_0)$ relative to a proper defining system.

Notice from the following exact sequence:
\[
H^1(G,\Omega/I^{n+1}\otimes T)\rightarrow H^1(G,\Omega/I^n\otimes T)\rightarrow H^2(G, I^n/I^{n+1}\otimes T)
\]
Hence $\Psi^{(n)}(f)$ is zero for a cocycle $f(\sigma)=\psi_0(\sigma)+\psi_1(\sigma)x+\cdots +\psi_{n-1}(\sigma)x^{n-1}$ in $\mathcal{C}^1(G, \Omega/I^n\otimes T)$  if and only if we can lift $f$ to $\mathcal{C}^1(G, \Omega/I^{n+1}\otimes T) $, i.e. there is a cocycle in $\mathcal{C}^1(G,\Omega/I^{n+1}\otimes T)$ in the form of  $\Tilde{f}(\sigma)=\psi_0(\sigma)+\psi_1(\sigma)x+\cdots +\psi_{n-1}(\sigma)x^{n-1}+\psi_n(\sigma)x^n$. 
\begin{definition}
    Let $ \psi$ be an element in $ H^1(G,T)=H^1(G,\Omega/I\otimes T)$. We say $\psi$ has $p$ cyclic Massey product vanishing property, if we can lift $\psi$ to an element in $H^1(G, \Omega\otimes T)$, i.e. there is an element in $\mathcal{C}^1(G,\Omega\otimes T)$ in the form $\sum_i \psi_i(\sigma)x^i$ with $\psi_0=\psi$, where the sum over all $0\leq i< \infty$ or $0\leq i< p^k$ with respect to $G/N\cong \Z_p$ or $\Z/p^k\Z$. 
\end{definition}

\begin{remark}
    Assume  $\Psi^{(i)}=0$ for all $0\leq i<n$ and $\psi$ has $p$ cyclic Massey product vanishing property. Then we have $ \Psi^{(n)}([\psi])=0$.
\end{remark}

\begin{theorem}\label{cyclicvanish}
   The element  $\psi\in H^1(G,T)$ has $p$ cyclic Massey product vanishing property if and olny if $\psi\in \Ima(H_{\Iw}^1(N,T)\xrightarrow{Cor}H^1(G,T))$ .
\end{theorem}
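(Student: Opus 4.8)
The plan is to reinterpret both sides of the claimed equivalence as the image of one and the same map. The augmentation $\Omega \to \Omega/I \cong \F_p$, which sends $x = \sigma - 1$ to $0$, induces a $G$-module map $\Omega \otimes T \to T$ and hence a map
\[
\pi_* \colon H^1(G, \Omega \otimes T) \longrightarrow H^1(G, T).
\]
I claim, first, that $\psi$ has the $p$ cyclic Massey product vanishing property if and only if $[\psi] \in \Ima(\pi_*)$, and second, that under the Shapiro isomorphism $H^1_{\Iw}(N,T) \cong H^1(G, \Omega \otimes T)$ of Lemma \ref{shapiro}, the map $\pi_*$ is precisely the corestriction map $Cor$. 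Chaining the two equivalences then yields the theorem.

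For the first claim, observe that for a cochain $F = \sum_i \psi_i x^i \in \mathcal{C}^1(G, \Omega \otimes T)$ the image $\pi(F)$ is its constant term $\psi_0$. If $\psi$ has the vanishing property, a cocycle $F$ with $\psi_0 = \psi$ gives $[\psi] = \pi_*([F])$, so $[\psi] \in \Ima(\pi_*)$. Conversely, if $[\psi] = \pi_*(c)$, represent $c$ by a cocycle $F = \sum_i \psi_i x^i$; then $\psi_0$ is cohomologous to $\psi$, say $\psi - \psi_0 = dm$ with $m \in T$. Replacing $F$ by the cohomologous cocycle $F + d(1 \otimes m)$ — whose constant term, using the explicit action $g(1 \otimes m) = \sum_k \binom{\chi(g)}{k} x^k \otimes gm$ recorded at the end of Subsection \ref{generalized Bockstein map}, becomes $\psi_0 + (gm - m) = \psi$ — produces a lift of the required form. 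This step is routine once the coboundary bookkeeping is set up.

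The substance of the argument is the second claim. Recall $H^1_{\Iw}(N,T) = \varprojlim_l H^1(U_l, T)$ with corestriction transition maps, and that $Cor$ is the projection onto the bottom term $H^1(U_0, T) = H^1(G, T)$. Under the explicit Shapiro isomorphism $\alpha$ of Lemma \ref{action1} we have $H^1(U_l, T) \cong H^1(G, \F_p[G/U_l] \otimes T)$, and I must check that corestriction corresponds to the augmentation-induced maps on coefficients. Concretely, composing $\alpha$ with the total augmentation $\F_p[G/U_l] \otimes T \to T$ sends $\phi \mapsto \sum_i \sigma_i^{-1}\phi(\sigma_i)$, which is exactly the trace formula computing corestriction to $G$; this identifies augmentation on coefficients with $Cor$ at each finite level, compatibly with the transition maps. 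Passing to the inverse limit as in the proof of Lemma \ref{shapiro} (via Tate's Proposition \ref{Tate}), the bottom projection $Cor$ becomes the map on $H^1(G, \varprojlim_l \F_p[G/U_l] \otimes T) = H^1(G, \Omega \otimes T)$ induced by the augmentation $\Omega \to \F_p$, which is exactly $\pi_*$.

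The hard part will be this compatibility between corestriction and the augmentation of group algebras under the explicit Shapiro isomorphism, and in particular its behavior in the inverse limit: I must verify that the coset-representative formula for $\alpha$ in Lemma \ref{action1} genuinely reproduces the standard trace/corestriction map, and that the finite-level identifications glue along the corestriction transition maps so that the identification survives $\varprojlim_l$. Once this is in place, the theorem follows by combining the two equivalences, namely $\psi$ has the $p$ cyclic Massey product vanishing property $\iff [\psi] \in \Ima(\pi_*) \iff [\psi] \in \Ima(Cor)$.
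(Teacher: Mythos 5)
Your proposal is correct and takes essentially the same route as the paper: the paper's proof likewise applies Lemma \ref{shapiro} to identify $H^1_{\Iw}(N,T)$ with $H^1(G,\Omega\otimes T)$ and observes that the map induced by the quotient $\Omega\to\Omega/I$ corresponds to the corestriction. Your two additional verifications --- the coboundary adjustment $F + d(1\otimes m)$ fixing the constant term on the nose, and the trace-formula check that the explicit Shapiro isomorphism of Lemma \ref{action1} carries the augmentation to $Cor$ compatibly in the inverse limit --- merely spell out what the paper asserts ``by definition.''
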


\begin{proof}
By lemma \ref{shapiro}, we have $ H_{\Iw}^1(N,T)\cong H^1(G,\Omega\otimes T)$. And the map $H^1(G,\Omega\otimes T)\rightarrow H^1(G,\Omega/I\otimes T) $ induced by the quotient $\Omega\rightarrow\Omega/I$ corresponding to the correstriction map $  H_{\Iw}^1(N,T)\rightarrow H^1(G,T)$ by definition.
  
\end{proof}

\section{Applications to number theory} \label{Applications to number theory}

Now we apply previous sections to number theory. Though all notations are standard, we list them here for convenience:
\begin{enumerate}
\item $p$: odd prime.
   \item $K$: number field.
   \item $ \mu_{p^l}$: group of $p^l$-th roots of unity.
  \item $S$: set of primes of $K$ above $p$. 
  \item $K_S$: maximal algebraic extension of $K$ which is unramified outside primes above $p$ and infinite primes.
   \item $G_{K,S}=\Gal(K_S/K)$.
  \item $\OO_K$: ring of integers of $K$.
 \item $\OO_{K,S}$: ring of $S$ -integers of $K$.
    \item $\Cl(K)$: class group of $K$.
       \item $h_K$: the size of $\Cl(K)$.
    \item $\Cl_S(K)$: $S$ -class group of $K$.

   \item $Br(\OO_K[1/p])$: The subgroup of the Brauer group $Br(K)$ of $K$ consisting of all classes of central simple $K$-algebras, which are split outside of primes above $p$. Hence, we have $Br(\OO_K[1/p])\cong (\Q/\Z)^{\#S-1} $ as abelian group. 

   \item $ A[n]$: subgroup of elements of abelian group $A$ that annihilated by $n$.
   \item Let $K\subset K_1\subset K_2\subset \cdots \subset K_\infty$ be a $\Z_p$ extension of $K$.
   \item $X$: the Iwasawa module $X=\varprojlim \Cl(K_l)$.
   \item $\lambda$: the Iwasawa invariant $ \lambda$ for $X$.
   \item $ \chi$: character that equal the restriction  $ G_{K,S}\xrightarrow{Res} \Gal(K_\infty/K)\cong \Z_p$.

\end{enumerate}

\begin{theorem}\label{cv in NT}
    Let $K_l/K$ be a $\Z/p^l\Z$ extension of $K$ inside $K_S$. Take $G=G_{K,S}=\Gal(K_S/K)$, $N=G_{K_l,S}=\Gal(K_S/K_l)$, $T=\mu_p$. Let  $\alpha \in \Nrm_{K_l/K}(\OO_{K_l,S})$, then $\alpha$ has $p$ cyclic Massey product vanishing property.

Let $K_\infty/K$ be a $\Z_p$ extension of $K$ inside $K_S$. Take $G=G_{K,S}=\Gal(K_S/K)$, $N=G_{K_\infty,S}=\Gal(K_S/K_\infty)$, $T=\mu_p$. Let $ \alpha \in \Ima(\varprojlim\OO_{K_l,S}\xrightarrow{\Nrm} K)$ where the inverse limit is taking over all sub-extension of $K_\infty/K$ with respect to norm map. Then $\alpha$ has $p$ cyclic Massey product vanishing property.
\end{theorem}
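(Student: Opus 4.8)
The whole statement is engineered to fall out of Theorem \ref{cyclicvanish}, which says that $\psi \in H^1(G,T)$ has the $p$ cyclic Massey product vanishing property exactly when $\psi$ lies in the image of the corestriction $H^1_{\Iw}(N,T) \xrightarrow{Cor} H^1(G,T)$. So the plan is to interpret $\alpha$ as a cohomology class and then exhibit it as such a corestriction. First I would recall how $\alpha$ is regarded as an element of $H^1(G_{K,S},\mu_p)$: the Kummer sequence $1 \to \mu_p \to \mathbb{G}_m \xrightarrow{p} \mathbb{G}_m \to 1$ over $K_S$ yields
\[
0 \to \OO_{K,S}^\times/p \xrightarrow{\delta_K} H^1(G_{K,S},\mu_p) \to \Cl_S(K)[p] \to 0,
\]
and an $S$-unit $\alpha$ (a norm of $S$-units is again an $S$-unit) gives the class $[\alpha] \coloneqq \delta_K(\alpha)$.

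The key input is that corestriction is intertwined with the arithmetic norm under the Kummer maps. For the inclusion $G_{K_l,S} = \Gal(K_S/K_l) \subset G_{K,S}$ the square
\[
\begin{tikzcd}
\OO_{K_l,S}^\times/p \arrow[r,"\delta_{K_l}"] \arrow[d,"\Nrm_{K_l/K}"'] & H^1(G_{K_l,S},\mu_p) \arrow[d,"Cor"] \\
\OO_{K,S}^\times/p \arrow[r,"\delta_K"] & H^1(G_{K,S},\mu_p)
\end{tikzcd}
\]
commutes, because corestriction is a morphism of cohomological $\delta$-functors, hence commutes with the connecting maps $\delta$, and on $H^0(-,\mathbb{G}_m)$ corestriction is the field norm. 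Granting this, the finite case is immediate: since $G/N \cong \Z/p^l\Z$ is finite we have $H^1_{\Iw}(N,\mu_p) = H^1(G_{K_l,S},\mu_p)$, and writing $\alpha = \Nrm_{K_l/K}(\beta)$ with $\beta \in \OO_{K_l,S}^\times$ gives $[\alpha] = \delta_K(\Nrm_{K_l/K}\beta) = Cor(\delta_{K_l}(\beta))$, which lies in $\Ima(Cor)$; Theorem \ref{cyclicvanish} then finishes it.

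For the $\Z_p$ case the only extra ingredient is passage to the limit. By definition $H^1_{\Iw}(N,\mu_p) = \varprojlim_l H^1(G_{K_l,S},\mu_p)$ with transition maps the corestrictions, and the structure map to $H^1(G_{K,S},\mu_p)$ is projection to the bottom ($l=0$) layer. An element of $\Ima(\varprojlim \OO_{K_l,S} \xrightarrow{\Nrm} K)$ is the bottom component $\beta_0 = \alpha$ of a norm-coherent system of $S$-units $(\beta_l)_l$ with $\Nrm_{K_{l+1}/K_l}(\beta_{l+1}) = \beta_l$. Applying the commuting square at each layer turns this into a corestriction-coherent system of classes, $Cor_{K_{l+1}/K_l}([\beta_{l+1}]) = [\Nrm_{K_{l+1}/K_l}\beta_{l+1}] = [\beta_l]$, so $([\beta_l])_l \in \varprojlim_l H^1(G_{K_l,S},\mu_p) = H^1_{\Iw}(N,\mu_p)$. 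Its image under the corestriction down to $H^1(G_{K,S},\mu_p)$ is the $l=0$ component $[\beta_0] = [\alpha]$, so again $[\alpha] \in \Ima(Cor)$ and Theorem \ref{cyclicvanish} applies.

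I expect the main obstacle to be the compatibility square itself. In the $S$-integral Kummer theory $\mu_p$ need not be a trivial $G_{K,S}$-module and $H^1(G_{K,S},\mu_p)$ is not simply $K^\times/p$, so one must verify carefully that corestriction genuinely restricts to the arithmetic norm on the $S$-unit part; this is where the identification of corestriction on $H^0(\mathbb{G}_m)$ with the norm, together with $\delta$-functoriality, has to be invoked cleanly. A secondary point, special to the $\Z_p$ case, is to confirm that the norm-coherent family of $S$-units really defines an element of the Iwasawa cohomology $\varprojlim_l H^1(G_{K_l,S},\mu_p)$ rather than living only in the individual finite layers, which is exactly what the coherence $Cor([\beta_{l+1}]) = [\beta_l]$ provides.
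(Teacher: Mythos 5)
Your proposal is correct and follows essentially the same route as the paper: interpret $\alpha$ as a Kummer class, use the compatibility of corestriction with the norm map on $S$-units, invoke Theorem \ref{cyclicvanish}, and pass to the inverse limit for the $\Z_p$ case. The paper's proof is just a terser version of this (it identifies $H^1(G_{K,S},\mu_p)$ with $(K^*\cap (K_S^*)^p)/(K^*)^p$, notes $\OO_{K_l,S}\subset (K_S^*)^p$, and cites corestriction $=$ norm), while you spell out the $\delta$-functoriality and the norm-coherent-to-corestriction-coherent limit argument that the paper leaves implicit.
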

\begin{proof}
    By Kummer theory, $H^1(G,\mu_p)\cong K^* \cap (K_S^*)^p/(K^*)^p$ and $H^1(N,\mu_p)=K_l^* \cap (K_S^*)^p/(K_l^*)^p$. The corestriction map from $H^1(N,\mu_p)$ to $ H^1(G,\mu_p)$  corresponds to Norm map\cite{milneCFT}. Notice that $\OO_{K_l,S} \in (K_S^*)^p$.
    And by theorem \ref{cyclicvanish}, the theorem holds. The second conclusion follows by taking the inverse limit.
\end{proof}
\begin{remark}
    The map $\varprojlim\OO_{K_l,S}\xrightarrow{\Nrm} \OO_{K,S}\hookrightarrow K$ is the projection map from the inverse limit $\varprojlim\OO_{K_l,S} $ to the first factor. 
\end{remark}

From Kummer theory, we have the following two exact sequences: (good references are \cite{kolster}\cite{MR4537772}\cite{MR2392026})
\begin{equation}
    0\rightarrow\OO_{K,S}^*/p:=\OO_{K,S}^*/(\OO_{K,S}^*)^p\rightarrow H^1(G_{K,S},\mu_p)\rightarrow \Cl_S(K)[p]\rightarrow 0 
  \label{1}  
\end{equation}

\begin{equation}
   0\rightarrow \Cl_S(K)/p  \rightarrow H^2(G_{K,S},\mu_p)\rightarrow Br(\OO_K[1/p])[p]\rightarrow 0 
  \label{2} 
\end{equation}

The map $\OO_{K,S}^*/p\rightarrow H^1(G_{K,S},\mu_p) $ is the composite of the natural inclusion and Kummer map,i.e.: $ \OO_{K,S}^*/p\rightarrow K^* \cap (K_S^*)^p/(K^*)^p \cong H^1(G_{K,S},\mu_p)$. The map $ H^1(G_{K,S},\mu_p)\rightarrow \Cl_S(K)[p]$ is the map $ \alpha \in K^* \cap (K_S)^p/(K^*)^p \cong H^1(G_{K,S},\mu_p)\rightarrow [I] \in \Cl_S(K)[p]$ where $ I$ is an ideal such that $ I^p=\alpha \OO_{K,S}$. 
Since $ Br(\OO_K[1/p])\cong (\Q/\Z)^{\#S-1}$, so $Br(\OO_K[1/p])[p]\cong \F_p^{\#S-1}$

Now, we can state and prove our main theorem. 

\begin{theorem} \label{main theom}
    Let $K\subset K_1\subset K_2\subset \cdots \subset K_\infty$ be a $\Z_p$ extension of $K$ and $S$ be the set of primes above $p$ for $K$. Assume all primes in $S$ are totally ramified in $K_\infty/K$. Let $X_{cs}=\varprojlim\Cl_S(K_l)$ and $\mu_{cs}$, $\lambda_{cs}$  be the Iwasawa invariant of $X_{cs} $. Assume $X_{cs} $ has no torsion element and $H^2(G_{K,S},\mu_p)\cong \F_p$. 
    
    Then $\mu_{cs}=0$ if and only if there exists $k$ such that $\Psi^{(k)}\neq 0$ for some $k$.  If $\mu_{cs}=0$, then $\lambda_{cs}=\min\{n|\Psi^{(n)}\neq 0\}-\#S+1$.
\end{theorem}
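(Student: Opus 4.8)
The plan is to identify the Iwasawa cohomology group $H^2_{\Iw}(N,\mu_p)$ with an extension of a fixed finite ``Brauer'' group by $X_{cs}/pX_{cs}$, and then to read off both assertions from Lemma \ref{main lemma} and Theorem \ref{filtration}. Throughout I take $G=G_{K,S}$, $N=G_{K_\infty,S}$ and $T=\mu_p$, so that $G/N\cong\Z_p$, $\#G/N=\infty$ (so every finite index $n$ is admissible), and the standing hypothesis $H^2(G_{K,S},\mu_p)\cong\F_p$ is precisely the hypothesis of Lemma \ref{main lemma}. I will also use that $X_{cs}$ is a finitely generated torsion module over $\Lambda=\Z_p[[\Gal(K_\infty/K)]]$, so that $\mu_{cs},\lambda_{cs}$ are defined.

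The heart of the argument is to produce a ``bridge'' exact sequence. For each $l$ the Kummer sequence \eqref{2} for $K_l$ reads
\[
0\rightarrow \Cl_S(K_l)/p \rightarrow H^2(G_{K_l,S},\mu_p)\rightarrow Br(\OO_{K_l}[1/p])[p]\rightarrow 0 ,
\]
and it is functorial for corestriction, which induces the norm maps on the class-group terms. Taking the inverse limit over $l$ and using that every group in sight is finite — so the relevant $\varprojlim^1$ terms vanish and $\varprojlim_l(\Cl_S(K_l)/p)\cong(\varprojlim_l\Cl_S(K_l))/p=X_{cs}/p$ — yields
\[
0\rightarrow X_{cs}/p \rightarrow H^2_{\Iw}(N,\mu_p)\rightarrow \varprojlim_l Br(\OO_{K_l}[1/p])[p]\rightarrow 0 .
\]
Here the total-ramification hypothesis is essential: it guarantees that the number of primes of $K_l$ above $p$ equals $\#S$ for every $l$, so each term is $Br(\OO_{K_l}[1/p])[p]\cong\F_p^{\#S-1}$, and because corestriction is the identity on local invariants the transition maps are isomorphisms. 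Hence $\varprojlim_l Br(\OO_{K_l}[1/p])[p]\cong\F_p^{\#S-1}$, and whenever the outer terms are finite we get $\#H^2_{\Iw}(N,\mu_p)=\#(X_{cs}/p)\cdot p^{\#S-1}$.

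With the bridge sequence the equivalence is formal. If some $\Psi^{(k)}\neq0$, Lemma \ref{main lemma} makes $H^2_{\Iw}(N,\mu_p)$ finite, hence $X_{cs}/p$ is finite, which for a finitely generated torsion $\Lambda$-module forces $\mu_{cs}=0$. Conversely, if $\mu_{cs}=0$ then $X_{cs}/p$ is finite, so $H^2_{\Iw}(N,\mu_p)$ is finite by the bridge sequence; but if every $\Psi^{(n)}$ vanished, Theorem \ref{filtration} would give $I^nH^2_{\Iw}/I^{n+1}H^2_{\Iw}\cong H^2(G,\mu_p)\otimes I^n/I^{n+1}\cong\F_p\neq0$ for all $n$, producing an infinite strictly descending filtration and contradicting finiteness. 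Hence some $\Psi^{(n)}\neq0$.

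Finally, assuming $\mu_{cs}=0$ I compute sizes. The no-torsion hypothesis makes $X_{cs}$ a free $\Z_p$-module, which together with $\mu_{cs}=0$ gives $X_{cs}\cong\Z_p^{\lambda_{cs}}$ and hence $\#(X_{cs}/p)=p^{\lambda_{cs}}$. The bridge sequence then gives $\#H^2_{\Iw}(N,\mu_p)=p^{\lambda_{cs}+\#S-1}$, while Lemma \ref{main lemma} gives $\#H^2_{\Iw}(N,\mu_p)=p^{n}$ with $n=\min\{n\mid\Psi^{(n)}\neq0\}$. Equating exponents yields $\lambda_{cs}=\min\{n\mid\Psi^{(n)}\neq0\}-\#S+1$. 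I expect the main obstacle to be the second paragraph: justifying exactness of the inverse-limit sequence (the $\varprojlim^1$ bookkeeping and the compatibility of \eqref{2} with corestriction) and, above all, the clean computation of $\varprojlim_l Br(\OO_{K_l}[1/p])[p]$, which is exactly where the total-ramification hypothesis and the behaviour of corestriction on local invariants must be invoked with care.
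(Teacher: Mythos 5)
Your proposal is correct and follows essentially the same route as the paper's proof: take the inverse limit of the Kummer sequence \eqref{2} along corestriction (inducing norm maps on the class-group terms), use finiteness/Mittag--Leffler to preserve exactness, identify $\varprojlim \Cl_S(K_l)/p$ with $X_{cs}/p$, compute the Brauer term, and combine Lemma \ref{main lemma} with Theorem \ref{filtration} and a size count.

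One step deserves comment, and it is to your credit. At the Brauer term the paper asserts that restriction $Br(\OO_{K_l}[1/p])[p]\rightarrow Br(\OO_{K_{l+1}}[1/p])[p]$ is an isomorphism, deduces from $Cor\circ Res=p=0$ that the corestriction transition maps vanish, and nevertheless concludes $\varprojlim Br(\OO_{K_l}[1/p])[p]\cong Br(\OO_K[1/p])[p]\cong\F_p^{\#S-1}$. As written this is internally inconsistent: an inverse limit along zero maps is zero, which would wreck the final count (one would get $\lambda_{cs}=\min\{n\mid\Psi^{(n)}\neq0\}$ instead of $\min\{n\mid\Psi^{(n)}\neq0\}-\#S+1$). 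The correct local analysis is the one you give: since each step $K_{l+1}/K_l$ is totally ramified at every prime of $S$ (so primes do not split and $\#S_l=\#S$), restriction multiplies local invariants by the local degree $p$ and hence kills $p$-torsion, while corestriction is the identity on local invariants and therefore induces isomorphisms $\F_p^{\#S-1}\rightarrow\F_p^{\#S-1}$ on the tower; the limit is $\F_p^{\#S-1}$ as required. Your treatment thus repairs the one flawed passage in the paper's own argument. You are also slightly more careful on the converse half of the $\mu_{cs}=0$ equivalence: the paper cites Lemma \ref{main lemma} for both directions, but that lemma only yields finiteness of $H^2_{\Iw}(N,\mu_p)$ from $\Psi^{(k)}\neq0$; your supplementary argument---that if all $\Psi^{(n)}=0$ then Theorem \ref{filtration} forces every graded piece $I^nH^2_{\Iw}/I^{n+1}H^2_{\Iw}\cong\F_p$ to be nonzero, contradicting finiteness---is exactly the missing half.
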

\begin{proof}
We have the following exact sequence:
    \begin{equation*}
   0\rightarrow \Cl_S(K_l)/p  \rightarrow H^2(G_{K_l,S},\mu_p)\rightarrow Br(\OO_{K_l}[1/p])[p]\rightarrow 0 
\end{equation*}
for every $l$.  since $\Cl_S(K_l)/p$ is finite group, it satisfies Mittag-Leffler condition. Thus the above exact sequence remains exact after taking inverse limit.
\begin{equation*}
   0\rightarrow \varprojlim \Cl_S(K_l)/p  \rightarrow \varprojlim H^2(G_{K_l,S},\mu_p)\rightarrow \varprojlim Br(\OO_{K_l}[1/p])[p]\rightarrow 0 
\end{equation*}

Since  all primes in $S$ are totally ramified $K_\infty/K$, we have $Br(\OO_{K_l}[1/p])[p]\cong Br(\OO_{K_{l+1}}[1/p])[p]:[A]\rightarrow [A\otimes_{K_l} K_{l+1}]$ for every $l$, where $[A]$ is a class of central simple $K_l$-algebra represented by $A$ . And we know the composite map $Cor\circ Res:  Br(\OO_{K_l}[1/p])[p]\cong Br(\OO_{K_{l+1}}[1/p])[p]\rightarrow Br(\OO_{K_l}[1/p])[p]$ is the multiplication by $p$. Hence the composite map is $0$. Therefore, the correstriction map $ Br(\OO_{K_{l+1}}[1/p])[p]\rightarrow Br(\OO_{K_l}[1/p])[p]$ is zero map. And take inverse limit with respect to the correstriction map, we have $\varprojlim Br(\OO_{K_l}[1/p])[p]\cong Br(\OO_{K}[1/p])[p] \cong\F_p^{\#S-1} $.

We have the following exact sequence:
\begin{equation*}
   0\rightarrow p\Cl_S(K_l)  \rightarrow \Cl_S(K_l)\rightarrow  \Cl_S(K_l)/p\rightarrow 0 
\end{equation*}
Take inverse limit,
\begin{equation*}
   0\rightarrow pX_{cs}  \rightarrow X_{cs}\rightarrow  \varprojlim\Cl_S(K_l)/p\rightarrow 0 
\end{equation*}
Hence $ \varprojlim\Cl_S(K_l)/p= X_{cs}/p$. We have $\mu_{cs}=0$ if and only if $X_{cs}/p$ is a finite group if and only if $ \varprojlim H^2(G_{K_l,S},\mu_p)$ is a finite group if and only if there exists $k$ such that $\Psi^{(k)}\neq 0$ for some $k$ by lemma \ref{main lemma} . Now assume $\mu_{cs}=0$. It is well known that the $\Z_p$ rank of $ X_{cs}$ is $\lambda_{cs}$. Since we assume that $X_{cs}$ has no torsion element, we have $ \varprojlim\Cl_S(K_l)/p= X_{cs}/p\cong \F_p^{\lambda_{cs}}$.

Recall that $p$ cohomological dimension of $G_{K,S}$ is 2. We have \[\#\varprojlim H^2(G_{K_l,S},\mu_p)=\#H_{\Iw}^2(G_{K_\infty,S},\mu_p)=p^n,\] 
where $n=\min\{n|\Psi^{(n)}\neq 0\}$ by lemma \ref{main lemma}. 
Therefore, by the exact sequence, we have
\[
\lambda_{cs}- \min\{n|\Psi^{(n)}\neq 0\}+\#S-1=0
\]
Hence $ \lambda_{cs}= \min\{n|\Psi^{(n)}\neq 0\}-\#S+1$

\end{proof}
In the proof of the theorem, to use the lemma \ref{main lemma}, we take $G=G_{K,S}$, $N=G_{K_\infty,S}$, $ \chi: G\rightarrow G/N\cong \Z_p$ and $\Omega=\F_p[[G/N]]$. And these are the set up that we use for most of the section. The purpose of the setup is purely for theoretical consistency. In practice, to calculate the Massey product, we would like $\Omega=\F_p[[G/N]]$ to be small. If we know $\lambda_{cs}<p^l$ for some $l$ in advance, we can take $G=G_{K,S}$, $N=G_{K_{p^l},S}$, $ \chi: G\rightarrow G/N\cong \Z/p^l\Z$ and $\Omega=\F_p[[G/N]]$.

\begin{theorem}\label{main theom small}
Let $K\subset K_1\subset K_2\subset \cdots \subset K_\infty$ be a $\Z_p$ extension of $K$ and $S$ be the set of primes above $p$ for $K$. Assume all primes in $S$ are totally ramified in $K_\infty/K$. Let $X_{cs}=\varprojlim\Cl_S(K_l)$ and $\mu_{cs}$, $\lambda_{cs}$  be the Iwasawa invariant of $X_{cs} $. Assume $X_{cs} $ has no torsion element and $H^2(G_{K,S},\mu_p)\cong \F_p$ and $\mu_{cs}=0$. Assume $\lambda_{cs}<p^l $,  Then $\lambda_{cs}=\min\{n|\Psi^{(n)}\neq 0\}-\#S+1$. Here the definition of $\Psi^{(n)}$ is with respect to $G=G_{K,S}$, $N=G_{K_l,S}$, $ \chi: G\rightarrow G/N\cong \Z/p^l\Z$ and $\Omega=\F_p[[G/N]]$
    
\end{theorem}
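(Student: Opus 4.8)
The plan is to deduce Theorem \ref{main theom small} from the already-established Theorem \ref{main theom} by comparing the two choices of quotient $G/N$. The only difference between the two statements is that here the pro-$p$ quotient $G/N \cong \Z_p$ of Theorem \ref{main theom} is replaced by the finite quotient $G/N \cong \Z/p^l\Z$, at the cost of the extra hypothesis $\lambda_{cs} < p^l$. So I would first isolate the statement that the generalized Bockstein maps attached to these two choices agree in low degrees, and then transport the formula $\min\{n \mid \Psi^{(n)} \neq 0\} = \lambda_{cs} + \#S - 1$ from the $\Z_p$-setting to the $\Z/p^l\Z$-setting.

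First I would make precise the comparison of Bockstein maps. The surjection $\Z_p \twoheadrightarrow \Z/p^l\Z$ induces a surjection of completed group algebras $\F_p[[x]] \twoheadrightarrow \F_p[x]/(x^{p^l})$ sending $x = \sigma - 1$ to $x$. For every $n$ with $0 \le n < p^l$, the modules $I^n/I^{n+1}$, $\Omega/I^n$ and $\Omega/I^{n+1}$ involve only the variable $x$ in degrees below $p^l$, so they are canonically identified in the two settings; consequently the short exact sequence $0 \to I^n/I^{n+1}\otimes T \to \Omega/I^{n+1}\otimes T \to \Omega/I^n \otimes T \to 0$ defining $\Psi^{(n)}$ is literally the same in both settings, and hence so are the induced connecting maps. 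Thus $\Psi^{(n)}$ computed with $G/N \cong \Z/p^l\Z$ equals $\Psi^{(n)}$ computed with $G/N \cong \Z_p$ for all $0 \le n < p^l$; in particular one vanishes if and only if the other does.

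With this in hand I would conclude as follows. By Theorem \ref{main theom}, the hypotheses $\mu_{cs} = 0$, $X_{cs}$ torsion-free and $H^2(G_{K,S},\mu_p) \cong \F_p$ give $n_0 := \min\{n \mid \Psi^{(n)} \neq 0\} = \lambda_{cs} + \#S - 1$ in the $\Z_p$-setting. The assumption $\lambda_{cs} < p^l$ is there to keep this minimal degree within the range $n_0 < p^l$ on which the comparison applies. Therefore $\Psi^{(n)} = 0$ for $n < n_0$ and $\Psi^{(n_0)} \neq 0$ also in the $\Z/p^l\Z$-setting, so $\min\{n \mid \Psi^{(n)} \neq 0\}$ is again $n_0$, giving $\lambda_{cs} = \min\{n \mid \Psi^{(n)}\neq 0\} - \#S + 1$ for the finite-level maps. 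Equivalently, one may invoke Lemma \ref{main lemma} directly in the finite setting, where $H^2_{\Iw}(G_{K_l,S},\mu_p) = H^2(G_{K_l,S},\mu_p)$, once the comparison guarantees some $\Psi^{(k)} \neq 0$ with $0 < k < p^l$.

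I expect the main obstacle to be the honest verification that the connecting homomorphisms, and not merely the underlying modules, are identified under the truncation $\F_p[[x]] \twoheadrightarrow \F_p[x]/(x^{p^l})$; this reduces to checking functoriality of the defining short exact sequences and of the resulting long exact cohomology sequence with respect to this quotient, which is routine but must be handled carefully at the boundary degree $n = p^l - 1$. A secondary point to pin down is the inequality $n_0 = \lambda_{cs} + \#S - 1 < p^l$: when $\#S = 1$ it is immediate from $\lambda_{cs} < p^l$, and in general one should read the hypothesis as the requirement that $n_0$ lie in the admissible range $\{0,\dots,p^l-1\}$ where finite-level Massey products detect the invariant.
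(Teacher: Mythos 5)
Your proposal is correct, but it takes a genuinely different route from the paper. The paper proves Theorem \ref{main theom small} directly at the finite level: using Lemma 13.15 of \cite{MR1421575} it writes $\Cl_S(K_l)/p = X_{cs}/(T^{p^l-1}Y_0,\, pX_{cs})$, then uses torsion-freeness ($fX_{cs}=0$ with $\deg f=\lambda_{cs}$) and $\lambda_{cs}<p^l$ to collapse this to $X_{cs}/pX_{cs}\cong \F_p^{\lambda_{cs}}$, and finally repeats the counting of Theorem \ref{main theom}, applying Lemma \ref{main lemma} with $G/N\cong \Z/p^l\Z$, where $H^2_{\Iw}(G_{K_l,S},\mu_p)=H^2(G_{K_l,S},\mu_p)$. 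You instead deduce the finite-level statement from Theorem \ref{main theom} via the truncation $\F_p[[x]]\twoheadrightarrow \F_p[x]/(x^{p^l})$, which identifies the defining short exact sequences and hence the connecting maps $\Psi^{(n)}$ for $0\le n<p^l$; this identification is indeed $G$-equivariant essentially for free, since $(1+x)^{p^l}=1+x^{p^l}$ in characteristic $p$, so the two actions agree on all the truncations in that range, including the boundary degree $n=p^l-1$. Your route buys a purely formal reduction that avoids the Iwasawa-module computation and makes transparent why $\lambda_{cs}<p^l$ enters (it keeps the minimal nonvanishing degree inside the admissible window); the paper's route buys the finite-level isomorphism $\Cl_S(K_l)/p\cong X_{cs}/p$, which is exactly what is needed to run Lemma \ref{main lemma} autonomously --- note that your alternative ending (``invoke Lemma \ref{main lemma} directly in the finite setting'') would still require that size computation, so it is your transport argument, not the aside, that stands alone. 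Finally, your closing caveat is a genuine observation about the statement itself: when $\#S\ge 2$ the hypothesis $\lambda_{cs}<p^l$ does not by itself force $n_0=\lambda_{cs}+\#S-1<p^l$, and in the borderline case $n_0=p^l$ all finite-level $\Psi^{(n)}$ vanish and the minimum is over the empty set; both your argument and the paper's (through the hypothesis of Lemma \ref{main lemma}) implicitly need $\lambda_{cs}+\#S-1<p^l$, so the hypothesis should be read that way, or $l$ enlarged.
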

\begin{proof}
 We have the following exact sequence:
    \begin{equation*}
   0\rightarrow \Cl_S(K_l)/p\rightarrow H^2(G_{K_l,S},\mu_p)\rightarrow Br(\OO_{K_l}[1/p])[p]\rightarrow0 
\end{equation*}
  We will use Lemma 13.15 in \cite{MR1421575} and the same notation as \cite{MR1421575}*{Lemma 13.15}. We have $\Cl_S(K_l)/p=X_{cs}/(\frac{(1+T)^{p^l}-1}{T}Y_0,pX_{cs})=X_{cs}/(T^{p^l-1}Y_0,pX_{cs})$. Let $f$ 
be the characteristic polynomial of $X_{cs}$. Since $X_{cs}$ has no torsion element, we have $fX_{cs}=0$. We have 
\begin{equation*}
    \begin{split}
        \Cl_S(K_l)/p&=X_{cs}/(T^{p^l-1}Y_0,pX_{cs})=X_{cs}/(T^{p^l-1}Y_0,pX_{cs},fX_{cs})\\&=X_{cs}/(T^{p^l-1}Y_0,pX_{cs},T^{\lambda_{cs}}X_{cs})=X_{cs}/(pX_{cs},T^{\lambda_{cs}}X_{cs})\\&=X_{cs}/(pX_{cs},fX_{cs})=X_{cs}/pX_{cs}\cong \F_p^{\lambda_{cs}}
    \end{split}
\end{equation*} since $\lambda_{cs}<p^l$. 

The remaining proof is similar to the proof of Theorem \ref{main theom}
\end{proof}

Next, we will discuss the situation when we have a group $\Delta$ acting on our Iwasawa module. We can decompose our Iwasawa module as a direct sum of eigenspace with respect to the action. For each direct sum, we can also define the Iwasawa $\lambda$ invariant. We will show that we can compute the Iwasawa $\lambda$ invariant in the same strategy since we have proved that the generalized Bockstein map preserves the group action in section \ref{generalized Bockstein map}.

Let $k$ be a number field and $K/k$ be an abelian extension. Denote $\Gal(K/k)=\Delta$.  Let $K\subset K_1\subset K_2\subset \cdots \subset K_\infty$ be a $\Z_p$ extension of $K$ and $K_\infty/k$ is an abelian extension. Suppose all the field extensions $K_l/k$, $K_\infty/k$, $K_S/k$ are Galois extensions. Assume $\Gal(K_\infty/k)\cong \Delta\oplus \Z_p$. Let $\G=G_{k,S}=\Gal(K_S/k)$, $G=G_{K,S}=\Gal(K_S/K)$, $N=G_{K_\infty,S}=\Gal(K_S/K_\infty)$ and $T=\mu_p$. By subsection \ref{generalized Bockstein map}, the Galois group $\Gal(K/k)=\Delta$ can act on $H^i(G_{K,S},\Omega/I^n\otimes \mu_p)$ and $\Psi^{(n)}(\tau \phi)=\tau \Psi^{(n)}(\phi)$ for any $\phi \in H^1(G,\Omega/I^n\otimes \mu_p)$ and $\tau \in \Delta$.

Let $\hat{\Delta}\coloneqq \Hom(\Delta, \Z_p)$ be the character group. Let $\omega \in \hat{\Delta}$ and define
\[
\varepsilon_{\omega}=\frac{1}{\# \Delta} \sum_{\sigma \in \Delta}\omega(\sigma)\sigma^{-1}\in \Z_p[\Delta].
\]
Let $X$ be any $\Z_p[\Delta]$ module. Then the standard process gives us a decomposition of $X$:
\[
X=\bigoplus_{\omega\in\hat{\Delta}} \varepsilon_{\omega}X.
\]

\begin{theorem}\label{action main theom}
  Let $K_\infty/K$ be the $\Z_p$ extension as set up above. Assume all primes in $S$ begin totally ramified starting $K$. Let $X_{cs}=\varprojlim\Cl_S(K_l)$. Assume $p\nmid \#\Delta $. The action of $\Delta$ on $ X_{cs}$ gives a decomposition $X_{cs}=\oplus_\omega \varepsilon_\omega X_{cs}$. Let $\mu_{\omega,cs}$, $\lambda_{\omega,cs}$  be the Iwasawa invariant of $\varepsilon_\omega X_{cs} $. Assume $\varepsilon_\omega X_{cs} $ has no torsion element and $\varepsilon_\omega H^2(G_{K,S},\mu_p)\cong \F_p$. Then  $\mu_{\omega,cs}=0$ if and only if there exist $k$ such that $\varepsilon_\omega\Psi^{(k)}\neq 0 $ for some $k$. If $\mu_{\omega,cs}=0$ then $\lambda_{\omega,cs}=\min\{n|\varepsilon_\omega\Psi^{(n)}\neq 0\}-dim_{\F_p}\varepsilon_\omega Br(\OO_{K}[1/p])[p]$  
\end{theorem}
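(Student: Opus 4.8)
The plan is to run the proof of Theorem \ref{main theom} one eigenspace at a time, applying the idempotent $\varepsilon_\omega$ throughout. The point of the hypothesis $p \nmid \#\Delta$ is that $\varepsilon_\omega$ is then a genuine idempotent in $\Z_p[\Delta]$, and its reduction is one in $\F_p[\Delta]$; hence $M \mapsto \varepsilon_\omega M$ is an exact functor (projection onto a direct summand) that commutes with the formation of inverse limits. Since $\Delta = \Gal(K/k)$ acts compatibly on every term of the Kummer-theoretic sequence
\[
0 \to \Cl_S(K_l)/p \to H^2(G_{K_l,S},\mu_p) \to Br(\OO_{K_l}[1/p])[p] \to 0,
\]
I would apply $\varepsilon_\omega$, pass to the inverse limit using the Mittag-Leffler condition exactly as before, and obtain the exact sequence
\[
0 \to \varepsilon_\omega(X_{cs}/p) \to \varepsilon_\omega H_{\Iw}^2(G_{K_\infty,S},\mu_p) \to \varepsilon_\omega Br(\OO_K[1/p])[p] \to 0,
\]
where I reuse from Theorem \ref{main theom} the identification $\varprojlim \Cl_S(K_l)/p = X_{cs}/p$ and the collapse of the Brauer inverse limit coming from total ramification.

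The second step is an equivariant refinement of Lemma \ref{main lemma}. By Section \ref{generalized Bockstein map} the map $\Psi^{(n)}$ commutes with the $\Delta$-action, and since $\Delta$ acts trivially on $\Omega$ it acts trivially on each graded piece $I^n/I^{n+1}$; thus the $I$-adic filtration of $H_{\Iw}^2(G_{K_\infty,S},\mu_p)$ is $\Delta$-stable and the canonical isomorphism of Theorem \ref{filtration} is $\Delta$-equivariant. Applying the exact functor $\varepsilon_\omega$ gives, for each $n$,
\[
\frac{I^n \varepsilon_\omega H_{\Iw}^2(G_{K_\infty,S},\mu_p)}{I^{n+1}\varepsilon_\omega H_{\Iw}^2(G_{K_\infty,S},\mu_p)} \cong \frac{\varepsilon_\omega H^2(G_{K,S},\mu_p)\otimes I^n/I^{n+1}}{\Ima \varepsilon_\omega \Psi^{(n)}}.
\]
As $\varepsilon_\omega H^2(G_{K,S},\mu_p)\cong \F_p$ by hypothesis, the Nakayama argument of Lemma \ref{main lemma} carries over to the $\omega$-component verbatim: setting $n = \min\{m \mid \varepsilon_\omega\Psi^{(m)}\neq 0\}$, the top graded quotient vanishes, Nakayama forces $I^n \varepsilon_\omega H_{\Iw}^2 = 0$, and each lower graded piece is one-dimensional, so $\#\varepsilon_\omega H_{\Iw}^2(G_{K_\infty,S},\mu_p) = p^n$. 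The same argument shows $\mu_{\omega,cs}=0$ is equivalent to the finiteness of $\varepsilon_\omega H_{\Iw}^2(G_{K_\infty,S},\mu_p)$, hence to the existence of some $k$ with $\varepsilon_\omega\Psi^{(k)}\neq 0$.

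It remains to count dimensions in the displayed short exact sequence. Because $\varepsilon_\omega X_{cs}$ is torsion-free of $\Z_p$-rank $\lambda_{\omega,cs}$, we have $\varepsilon_\omega(X_{cs}/p)\cong \F_p^{\lambda_{\omega,cs}}$; the middle term has $\F_p$-dimension $n$ by the previous step; and the last term contributes $\dim_{\F_p}\varepsilon_\omega Br(\OO_K[1/p])[p]$. Additivity of $\F_p$-dimension then yields
\[
\lambda_{\omega,cs} = n - \dim_{\F_p}\varepsilon_\omega Br(\OO_K[1/p])[p],
\]
which is the asserted formula. I expect the only real obstacle to be the care needed in the equivariant version of Lemma \ref{main lemma}: one must verify that $\varepsilon_\omega$ commutes with the corestriction transition maps defining $H_{\Iw}^2$ and with multiplication by $I$, so that $\varepsilon_\omega$ of the filtered module is again filtered with the expected graded pieces. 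Granting the $\Delta$-equivariance of $\Psi^{(n)}$ already established in Section \ref{generalized Bockstein map}, this comes down to the exactness of $\varepsilon_\omega$, which is precisely where $p \nmid \#\Delta$ enters.
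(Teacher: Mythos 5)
Your proposal is correct and follows essentially the same route as the paper's own proof: apply the exact idempotent $\varepsilon_\omega$ to the Kummer exact sequence, pass to the inverse limit, use the $\Delta$-equivariance of $\Psi^{(n)}$ from Section \ref{generalized Bockstein map} to run Lemma \ref{main lemma} on the $\omega$-eigenspace, and count $\F_p$-dimensions. The only difference is one of exposition: where the paper says ``similar argument as Theorem \ref{main theom}'' and records only $\Psi^{(n)}|_{\varepsilon_\omega}=0 \Leftrightarrow \varepsilon_\omega\Psi^{(n)}=0$, you make explicit the $\Delta$-stability of the $I$-adic filtration and the equivariance of the isomorphism in Theorem \ref{filtration} before applying Nakayama, which is exactly the verification the paper leaves implicit.
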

\begin{proof}
  The proof is almost the same as Theorem \ref{main theom}, we give a sketch of proof here.

  We have the following exact sequence:
    \begin{equation*}
   0\rightarrow \Cl_S(K_l)/p  \rightarrow H^2(G_{K_l,S},\mu_p)\rightarrow Br(\OO_{K_l}[1/p])[p]\rightarrow 0 
\end{equation*}
for every $l$. The action of $\Delta$ gives us:
 \begin{equation*}
   0\rightarrow \varepsilon_\omega\Cl_S(K_l)/p  \rightarrow \varepsilon_\omega H^2(G_{K_l,S},\mu_p)\rightarrow \varepsilon_\omega Br(\OO_{K_l}[1/p])[p]\rightarrow 0 
\end{equation*}
Take inverse limit,
 \begin{equation*}
   0\rightarrow \varepsilon_\omega\varprojlim\Cl_S(K_l)/p  \rightarrow \varepsilon_\omega \varprojlim H^2(G_{K_l,S},\mu_p)\rightarrow \varepsilon_\omega \varprojlim Br(\OO_{K_l}[1/p])[p]\rightarrow 0 
\end{equation*}
We have $\mu_{\omega,cs}=0$ if and only if $ \varepsilon_\omega\varprojlim\Cl_S(K_l)/p$ is finite if and only if $\varepsilon_\omega \varprojlim H^2(G_{K_l,S},\mu_p)$ is finite if and only if there exist $k$ such that $\varepsilon_\omega\Psi^{(k)}\neq 0 $ for some $k$. 

Assume $\mu_{\omega,cs}=0 $ now. Similar argument as proof of Theorem \ref{main theom}, we have \[\varepsilon_\omega \varprojlim Br(\OO_{K_l}[1/p])[p]=\varepsilon_\omega  Br(\OO_{K}[1/p])[p]\]
and \[\varepsilon_\omega\varprojlim\Cl_S(K_l)/p\cong\F_p^{\lambda_{\omega,cs}}\]
and 
\[\varepsilon_\omega \varprojlim H^2(G_{K_l,S},\mu_p)=\min\{n|\Psi^{(n)}|_{\varepsilon_\omega H^2(G_{K,S},\Omega/I^n\otimes \mu_p)}\neq 0\},\] 
where $\Psi^{(n)}|_{\varepsilon_\omega H^2(G_{K,S},\Omega/I^n\otimes \mu_p)}$ denotes $ \Psi^{(n)}$ restricting on $\varepsilon_\omega H^2(G_{K,S},\Omega/I^n\otimes \mu_p)$. Since $\Psi^{(n)}(\tau \phi)=\tau \Psi^{(n)}(\phi)$ for any $\phi \in H^1(G,\Omega/I^n\otimes \mu_p)$ and $\tau\in \Delta$, we have $\Psi^{(n)}|_{\varepsilon_\omega H^2(G_{K,S},\Omega/I^n\otimes \mu_p)}=0$ if and only if $\varepsilon_\omega\Psi^{(n)}=0$. Hence the conclusion follows from the exact sequence. 
\end{proof}
\begin{remark}
    We can have a similar theorem as theorem \ref{main theom small} with the action of $\Delta$. We omit it here since it is essentially the same.
\end{remark}
Before we apply the theorem to a specific field, we first recall some well-known lemmas in the classical Iwasawa theory. They will be used in the next section. The next lemma is Proposition 13.26 in the book \cite{MR1421575}.
\begin{lemma}\label{injective map}
    Let $p$ be odd. Suppose $K$ is a CM-field and $K_\infty/K$ is the cyclotomic $\Z_p$ extension. The complex conjugation action gives us the decomposition $ \Cl(K_l)[p^\infty]=\Cl(K_l)[p^\infty]^+\oplus \Cl(K_l)[p^\infty]^-$. Assume all primes which are ramified in $K_\infty/K$ are totally ramified. Then the map
    \[\Cl(K_l)[p^\infty]^-\rightarrow \Cl(K_{l+1})[p^\infty]^-\]
    is injective.
\end{lemma}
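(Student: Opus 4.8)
The plan is to reduce the statement to a single step of the tower and prove injectivity there. Since $\Gal(K_{l+1}/K_l)\cong \Z/p\Z$, it suffices to show that the natural map $\iota\colon \Cl(K_l)[p^\infty]^-\to \Cl(K_{l+1})[p^\infty]^-$ has trivial kernel. Write $F=K_l$, $L=K_{l+1}$, $G=\Gal(L/F)=\langle\sigma\rangle$, and let $j$ denote complex conjugation. First I would record that $\Gal(K_\infty/K^+)$ is abelian (it embeds into $\Z_p^*$ via the cyclotomic character, where $K^+$ is the maximal totally real subfield of $K$), so $j$ is central and commutes with $\sigma$; because $p$ is odd, the idempotents $\tfrac{1\pm j}{2}$ split every $G$-module that occurs and passage to the $(-)$-eigenspace is exact, so $(\ker\iota)^-=\ker\bigl(\Cl(K_l)[p^\infty]^-\to \Cl(K_{l+1})[p^\infty]^-\bigr)$.

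The heart of the argument is a capitulation cocycle. If $[\mathfrak{a}]\in\ker\iota$, then $\mathfrak{a}\OO_L=(\beta)$ is principal, and $\sigma\mapsto\sigma(\beta)/\beta$ is a $1$-cocycle valued in $\OO_L^*$; this defines a homomorphism $\ker\iota\to H^1(G,\OO_L^*)$. I would then check two things. It is injective: if the cocycle is a coboundary, then $\beta$ may be rescaled by a unit so as to lie in $F$, and since ideal extension $F\to L$ is injective this forces $\mathfrak{a}$ to be principal already in $F$. It is $j$-equivariant: from $j\mathfrak{a}\,\OO_L=(j\beta)$ and $j\sigma=\sigma j$, the cocycle attached to $j[\mathfrak{a}]$ is $j$ applied to the cocycle of $[\mathfrak{a}]$. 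Restricting to $(-)$-parts and $p$-primary parts therefore reduces the whole lemma to the vanishing $H^1(G,\OO_L^*)^-\otimes\Z_p=0$.

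To compute this I would invoke the unit theory of CM fields. For the CM field $L$ with maximal totally real subfield $L^+$ one has $\OO_L^*=\mu_L\,\OO_{L^+}^*$ up to index dividing $2$; tensoring with $\Z_p$ removes that index, $j$ acts trivially on the real units $\OO_{L^+}^*$ and by inversion on $\mu_L$, so $(\OO_L^*\otimes\Z_p)^-=\mu_{p^\infty}\cap L$. Everything thus comes down to $H^1(G,\mu_{p^\infty}\cap L)$, and here a dichotomy dictated by the cyclotomic tower forces the group to vanish. If $\mu_p\not\subseteq K$, then since $K(\mu_p)/K$ has degree prime to $p$ it cannot lie in the $\Z_p$-extension, so no $p$-power roots of unity enter $K_{l+1}$ and the module is trivial. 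If $\mu_p\subseteq K$, then $K_\infty=K(\mu_{p^\infty})$ and the $p$-power roots of unity grow by one step at every layer, so $G$ acts on $\mu_{p^\infty}\cap L\cong \Z/p^m\Z$ by $\zeta\mapsto\zeta^{1+p^{m-1}}$ with $m\ge 2$; a direct Tate-cohomology computation for the cyclic group $G$ gives $\Ima(\sigma-1)=\ker N=p^{m-1}\Z/p^m\Z$, whence $H^1(G,\mu_{p^\infty}\cap L)\cong\hat H^{-1}(G,\mu_{p^\infty}\cap L)=0$. In either case the target is $0$, so $(\ker\iota)^-=0$.

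I expect the delicate point to be this last computation, and in particular the identification of the $G$-action on the newly adjoined $p$-power roots of unity as multiplication by $1+p^{m-1}$, equivalently that $L/F$ is the corresponding cyclotomic layer. This is where the structural hypotheses do their work: the assumption that every ramified prime is totally ramified keeps each intermediate field genuinely CM and keeps the tower in the standard cyclotomic situation, so that the root-of-unity growth and the Galois action are as asserted and the $(\pm)$-decomposition behaves consistently level by level. Granting the vanishing $H^1(G,\OO_L^*)^-\otimes\Z_p=0$, the injection $(\ker\iota)^-\hookrightarrow 0$ completes the proof.
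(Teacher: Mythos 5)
Your proof is correct. The paper does not actually prove this lemma --- it quotes it as Proposition 13.26 of \cite{MR1421575} --- and your argument is in essence the standard proof behind that citation: the capitulation kernel embeds into $H^1(\Gal(K_{l+1}/K_l),\OO_{K_{l+1}}^*)$, the minus part of the units is (after inverting $2$) exactly the $p$-power roots of unity by the CM unit index theorem $[E_L:W_L E_{L^+}]\leq 2$, and the cohomology of that module vanishes by the dichotomy you describe. Washington packages the same computation slightly differently: he works with $\beta=\alpha^{1-j}$, all of whose archimedean absolute values are $1$, and applies Kronecker's theorem to see that the cocycle $\sigma\mapsto \sigma(\beta)/\beta$ is valued in roots of unity, rather than splitting $\OO_L^*\otimes\Z_p$ by the idempotents $\frac{1\pm j}{2}$; but the core content --- the Tate cohomology computation on $\mu_{p^m}$ with $\sigma$ acting by $\zeta\mapsto\zeta^{1+up^{m-1}}$, $u\in\Z_p^*$, $m\geq 2$, giving $\ker N=(\sigma-1)\mu_{p^m}=p^{m-1}\Z/p^m\Z$ --- is identical, and your verification of it is sound.

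Two small corrections to your write-up, neither of which is a gap. First, your justification that $j$ is central is garbled: $\Gal(K_\infty/K^+)$ does not embed into $\Z_p^*$ via the cyclotomic character in general (the $p$-power roots of unity need not lie in $K_\infty$). The correct and easy argument is that $K_\infty=K\cdot K^+_\infty$ is the compositum of the two abelian extensions $K/K^+$ (quadratic) and $K^+_\infty/K^+$ (pro-$p$, $p$ odd), which are linearly disjoint over $K^+$, so $\Gal(K_\infty/K^+)$ is abelian and $j$ commutes with $\sigma$. Second, your closing paragraph misattributes a role to the total-ramification hypothesis: nothing in your argument uses it, and nothing needs to. Each layer $K_l=K\Q_l$ is automatically CM because $\Q_l$ is totally real, and when $\mu_p\subseteq K$ one automatically has $K_\infty=K(\mu_{p^\infty})$ with $\mu_{p^\infty}\cap K_l=\mu_{p^{r+l}}$ (where $\mu_{p^r}=\mu_{p^\infty}\cap K$, $r\geq 1$), so the growth of roots of unity and the Galois action are as you assert for the cyclotomic tower regardless of ramification. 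The hypothesis is a running assumption carried over from \cite{MR1421575}*{Section 13} that is genuinely needed in the neighboring statements, such as the refinement in Theorem \ref{inj map} and the non-existence of finite submodules in Lemma \ref{no finite mudule}, not for this injectivity; proving the lemma without invoking it is harmless, but you should not claim the hypothesis ``does its work'' in steps where it plays no role.
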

By using the lemma \ref{injective map}, one can get the following lemma. It is Proposition 13.28 in \cite{MR1421575}.
\begin{lemma} \label{no finite mudule}
    The same assumption as Lemma \ref{injective map}, then $X^-=\varprojlim \Cl(K_l)[p^\infty]^-$ contains no finite $\Lambda$-submodules.
\end{lemma}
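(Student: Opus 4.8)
The plan is to argue directly with the inverse system defining $X^-=\varprojlim\Cl(K_l)[p^\infty]^-$, exploiting the injectivity from Lemma \ref{injective map} together with the two natural maps between consecutive layers. Write $A_l=\Cl(K_l)[p^\infty]^-$, let $\Gamma=\Gal(K_\infty/K)$ with topological generator $\gamma$ and $\omega_m=\gamma^{p^m}-1$, and let $\pi_l\colon X^-\to A_l$ be the projection. Besides the norm maps $N_{l+1,l}\colon A_{l+1}\to A_l$ (with respect to which the limit is formed, so $\pi_l=N_{l+1,l}\circ\pi_{l+1}$), I will use the extension-of-ideals maps $i_{l,l+1}\colon A_l\to A_{l+1}$, which are \emph{injective} by Lemma \ref{injective map}. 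These satisfy the standard relations $N_{l+1,l}\circ i_{l,l+1}=p$ on $A_l$ and $i_{l,l+1}\circ N_{l+1,l}=\nu_{l+1,l}$ on $A_{l+1}$, where $\nu_{l+1,l}=1+\gamma^{p^l}+\cdots+\gamma^{(p-1)p^l}$ is the norm element of $\Gal(K_{l+1}/K_l)$.

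Suppose $Y\subseteq X^-$ is a nonzero finite $\Lambda$-submodule; I aim for a contradiction. Since $Y$ is finite and $\Lambda$-stable, $\Gamma$ acts on it through a finite quotient, so $\gamma^{p^m}$ acts trivially on $Y$ for some $m$, i.e. $\omega_m Y=0$. Fix $0\neq y\in Y$. Because $X^-=\varprojlim A_l$ we have $\bigcap_l\ker\pi_l=0$, so $\pi_l(y)\neq0$ for some $l$; and since $\pi_l=N_{l',l}\circ\pi_{l'}$ for $l'\geq l$, vanishing of $\pi_{l'}(y)$ would force $\pi_l(y)=0$, so in fact $\pi_{l'}(y)\neq0$ for all $l'\geq l$.

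The key computation is as follows. For $l\geq m$ we have $\gamma^{p^l}y=y$, hence $\nu_{l+1,l}\,y=py$. Applying $\pi_{l+1}$ and using $i_{l,l+1}\circ N_{l+1,l}=\nu_{l+1,l}$ together with $\Lambda$-linearity of $\pi_{l+1}$ gives
\[
i_{l,l+1}\bigl(\pi_l(y)\bigr)=i_{l,l+1}\bigl(N_{l+1,l}(\pi_{l+1}(y))\bigr)=\nu_{l+1,l}\,\pi_{l+1}(y)=\pi_{l+1}(py)=p\,\pi_{l+1}(y).
\]
For $l$ large enough that $l\geq m$ and $\pi_l(y)\neq0$, the left-hand side is nonzero by injectivity of $i_{l,l+1}$, so $\pi_{l+1}(py)=p\,\pi_{l+1}(y)\neq0$. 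Thus $\pi_l(py)\neq0$ for all large $l$ as well, and since $py\in Y$ with $\omega_m(py)=0$, the same argument applies with $py$ in place of $y$. Iterating, $\pi_l(p^k y)\neq0$ for every $k$ and all sufficiently large $l$. But $Y$ is a finite abelian $p$-group, so $p^k y=0$ for some $k$, whence $\pi_l(p^k y)=0$ for all $l$ — a contradiction. Therefore $Y=0$, and $X^-$ contains no nonzero finite $\Lambda$-submodule.

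The step I expect to require the most care is the verification of the map identities $N_{l+1,l}\circ i_{l,l+1}=p$ and $i_{l,l+1}\circ N_{l+1,l}=\nu_{l+1,l}$, and in particular the compatibility of complex conjugation with the norm and extension maps so that Lemma \ref{injective map} applies verbatim to $i_{l,l+1}$ restricted to the minus components $A_l=\Cl(K_l)[p^\infty]^-$. This is exactly where the CM hypothesis (giving a central complex conjugation that commutes with $\Gamma$, using $p$ odd to split off the minus idempotent) and the total-ramification hypothesis of Lemma \ref{injective map} are used. Once the relation $i_{l,l+1}(\pi_l(y))=p\,\pi_{l+1}(y)$ is established on the minus parts, the remainder is a purely formal descent on powers of $p$.
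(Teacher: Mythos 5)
Your proof is correct and is essentially the argument of Washington's Proposition 13.28, which is exactly what the paper cites for this lemma: the relations $N_{l+1,l}\circ i_{l,l+1}=p$ and $i_{l,l+1}\circ N_{l+1,l}=\nu_{l+1,l}$, the injectivity of $i_{l,l+1}$ on minus parts from Lemma \ref{injective map}, and the fact that $\nu_{l+1,l}$ acts as multiplication by $p$ on a finite submodule on which $\gamma^{p^l}$ acts trivially. The only difference is presentational: the standard proof takes $m-l$ large in one stroke, so that $\nu_{m,l}$ acts as $p^{m-l}=0$ on the finite module and injectivity yields $\pi_l(y)=0$ directly, whereas you iterate one layer at a time to contradict the finiteness of $Y$.
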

In the paper \cite{Gold}, Gold refined the Lemma \ref{injective map} to the following theorem.
\begin{theorem}\label{inj map}
    The same assumption as lemma \ref{injective map}, let $D_l$ be the subgroup of $\Cl(K_l)[p^\infty]$ generated by primes of $K_l$ above $p$. Hence we have $\Cl_S(K_l)[p^\infty]=\Cl(K_l)[p^\infty]/D_l$ and  $\Cl_S(K_l)[p^\infty]^-=\Cl(K_l)[p^\infty]^-/D_l^-$ by definition. Then the map
    \[
    \Cl_S(K_l)[p^\infty]^-\rightarrow \Cl_S(K_{l+1})[p^\infty]^-
    \]
    is injective. Moreover $\# D_m^-=p^{s(m-l)}\#D_l^-$ for  any integer $m\geq l$, where $s$ is the number of primes of $K_l^+$ lying over $p$ which split in  $K_l/K_l^+$.
\end{theorem}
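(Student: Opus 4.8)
The statement has two parts, the injectivity of the map on $S$-class groups and the order formula for $D^-$, and both are driven by one computation coming from total ramification fed into the injectivity of $\Cl(K_l)[p^\infty]^-\to\Cl(K_{l+1})[p^\infty]^-$ supplied by Lemma \ref{injective map}. Write $A_l=\Cl(K_l)[p^\infty]$, let $i\colon A_l^-\to A_{l+1}^-$ be the map induced by extension of ideals and $\mathrm N\colon A_{l+1}^-\to A_l^-$ the norm, so that $\mathrm N\circ i$ is multiplication by $p$ and $i$ is injective. I would first record that a prime $\mathfrak P\mid p$ of $K_l$ that is inert or ramified in $K_l/K_l^+$ is fixed by complex conjugation, so $[\mathfrak P]^-=0$; thus $D_l^-$ is generated by the classes $[\mathfrak P]^-$ attached to the $s$ pairs of split primes, and since total ramification puts the primes above $p$ at consecutive levels in canonical bijection and preserves their splitting type in $K_l/K_l^+$, the integer $s$ is the same at every level.

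The key local input is that for a split prime $\mathfrak P$ of $K_l$, with $\mathfrak P'$ the unique prime of $K_{l+1}$ above it, total ramification of degree $p$ gives $\mathfrak P\OO_{K_{l+1}}=(\mathfrak P')^p$ and $\mathrm N_{K_{l+1}/K_l}(\mathfrak P')=\mathfrak P$. Hence $i([\mathfrak P]^-)=p[\mathfrak P']^-$ and $\mathrm N([\mathfrak P']^-)=[\mathfrak P]^-$, and since the classes $[\mathfrak P']^-$ generate $D_{l+1}^-$ I obtain the two structural identities $i(D_l^-)=pD_{l+1}^-$ and $\mathrm N(D_{l+1}^-)=D_l^-$ that drive everything else.

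From $i(D_l^-)=pD_{l+1}^-$ and the injectivity of $i$ I get $\#D_l^-=\#(pD_{l+1}^-)$, so that
\[
\frac{\#D_{l+1}^-}{\#D_l^-}=[D_{l+1}^-:pD_{l+1}^-]=p^{\dim_{\F_p}(D_{l+1}^-/pD_{l+1}^-)}.
\]
To reach the exponent $s$ I must show the $s$ generators $[\mathfrak P']^-$ are independent modulo $p$, i.e. that $D_{l+1}^-$ has $p$-rank exactly $s$; granting this the ratio is $p^s$ at each level and $\#D_m^-=p^{s(m-l)}\#D_l^-$ follows by induction on $m$.

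For the injectivity of $\Cl_S(K_l)[p^\infty]^-\to\Cl_S(K_{l+1})[p^\infty]^-$, take $c\in A_l^-$ with $i(c)\in D_{l+1}^-$; since $i(D_l^-)=pD_{l+1}^-$ and $i$ is injective, the conclusion $c\in D_l^-$ is equivalent to $i(c)\in pD_{l+1}^-$, so it suffices to prove $i(A_l^-)\cap D_{l+1}^-\subseteq pD_{l+1}^-$. Applying $\mathrm N$ to $i(c)\in D_{l+1}^-$ yields $pc=\mathrm N(i(c))\in\mathrm N(D_{l+1}^-)=D_l^-$, which at least shows that any class in the kernel is killed by $p$. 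The descent from $pc\in D_l^-$ to $c\in D_l^-$ is where the real work lies, and I expect this to be the \emph{main obstacle}: the norm computation discards the $p$-torsion, so to close the gap one must analyze how ideals supported above $p$ behave in the totally ramified step $K_{l+1}/K_l$, using that $i(c)$ is a $\Gamma$-fixed (ambiguous) class because $c$ descends to $K_l$, together with an ambiguous-class/genus-theory count and Lemma \ref{injective map}. This same analysis should simultaneously deliver the $p$-rank claim of Step 3, so I would establish the independence of the split-prime classes and the injectivity in one stroke.
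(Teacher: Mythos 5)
Your formal bookkeeping is correct: $\Nrm\circ i=p$ on $\Cl(K_l)[p^\infty]^-$, total ramification gives $\mathfrak P\OO_{K_{l+1}}=(\mathfrak P')^p$ and $\Nrm(\mathfrak P')=\mathfrak P$, hence $i(D_l^-)=pD_{l+1}^-$ and $\Nrm(D_{l+1}^-)=D_l^-$, the non-split primes above $p$ die in the minus part, and your reformulation of injectivity as $i(A_l^-)\cap D_{l+1}^-\subseteq pD_{l+1}^-$ is an honest equivalence given Lemma \ref{injective map}. But the proposal does not prove the theorem: both halves are reduced to exactly the two assertions that constitute its entire content, and you prove neither. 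For the order formula you need $\dim_{\F_p}D_m^-/pD_m^-=s$ at every level above $l$ (your ``granting this''), and for injectivity you need the descent from $pc\in D_l^-$ to $c\in D_l^-$ (your ``main obstacle''). The norm trick is genuinely a dead end for the latter: it shows only that a kernel class of $\Cl_S(K_l)[p^\infty]^-\to\Cl_S(K_{l+1})[p^\infty]^-$ is $p$-torsion, and since that group in general has plenty of $p$-torsion, no contradiction can be extracted; the containment you reduce to is the original problem restated. Nor can pure $\Gamma$-module formalism deliver it, since all the identities you use are symmetric bookkeeping that hold whether or not the conclusion does.

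What actually closes both gaps --- this is Gold's argument, by the same mechanism as the proof of Lemma \ref{injective map} (Proposition 13.26 of \cite{MR1421575}) --- is CM unit arithmetic. Given $c=[\mathfrak a]$ with $i(c)\in D_{l+1}^-$, one writes $\mathfrak a\OO_{K_{l+1}}=(\alpha)\prod_i(\mathfrak P_i')^{a_i}$ with $0\leq a_i<p$ after absorbing $p$-th powers into ideals extended from $K_l$; since $\mathfrak a\OO_{K_{l+1}}$ and each $\mathfrak P_i'$ are fixed by a generator $\gamma$ of $\Gal(K_{l+1}/K_l)$ (total ramification), $\alpha^{\gamma-1}$ is a unit, and on the minus side units are roots of unity ($u/\bar u\in\mu$ for any unit $u$ of a CM field). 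Controlling $H^1(\Gal(K_{l+1}/K_l),\mu(K_{l+1}))$ then lets one adjust $\alpha$ by an element of $K_l^*$ and force $a_i\equiv 0\bmod p$, i.e. $c\in D_l^-$; the same relation analysis, using that the minus part of the unit group is finite, shows the $s$ split-prime classes admit no nontrivial relations modulo $p$, which is your rank claim. So your instinct that one argument should settle both statements ``in one stroke'' is right in spirit, but that argument is the whole proof and it is absent. Note also, for calibration, that the paper itself gives no proof of this theorem: it cites Gold and remarks that the order formula is ``hidden in Gold's proof,'' so your outline is not diverging from an argument in the paper --- but as a standalone proof it is incomplete at precisely the two load-bearing points.
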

The conclusion about the size of $D_l$ is hidden in Gold's proof of his theorem. 
\begin{theorem}\label{no finite}
    The same assumption as lemma \ref{injective map}, then $X_{cs}^-=\varprojlim \Cl_S(K_l)[p^\infty]^-$ contains no finite $\Lambda$-submodules.
\end{theorem}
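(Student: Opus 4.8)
The plan is to prove that $X_{cs}^-$ has no nonzero finite $\Lambda$-submodule (equivalently, since $\Lambda=\Z_p[[T]]$ is a two-dimensional regular local ring, no nonzero pseudo-null submodule) by adapting the proof of Lemma \ref{no finite mudule} (Washington's Proposition 13.28). That argument uses nothing about the full class groups beyond the injectivity of the transition maps supplied by Lemma \ref{injective map}; since Theorem \ref{inj map} furnishes the identical injectivity for the $S$-class groups $A_n:=\Cl_S(K_n)[p^\infty]^-$, I expect the same formal argument to carry over verbatim once the relevant compatibilities are recorded.

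First I would fix notation: let $\Gamma=\Gal(K_\infty/K)$ with topological generator $\gamma_0$, set $T=\gamma_0-1$, $\omega_n=(1+T)^{p^n}-1$, and $\nu_{n+1,n}=\omega_{n+1}/\omega_n=\sum_{i=0}^{p-1}t^i$ where $t=\gamma_0^{p^n}$ modulo $\Gamma_{n+1}$. Recall $X_{cs}^-=\varprojlim_n A_n$ with transition maps the norm (corestriction) maps $N_n:A_{n+1}\to A_n$, and that $X_{cs}^-$ is a finitely generated $\Lambda$-module (a quotient of $X^-$). Let $\varphi_n:A_n\to A_{n+1}$ be the natural extension-of-ideals maps, which are injective by Theorem \ref{inj map}. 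The two facts I would record are: (i) $\varphi_n\circ N_n$ is multiplication by $\nu_{n+1,n}$ on $A_{n+1}$ (extension followed by the ideal norm is the product over $\Gal(K_{n+1}/K_n)$); and (ii) $A_n$ is a module over $\Z_p[\Gamma/\Gamma_n]$, so $\omega_n$ annihilates $A_n$.

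Then I would argue by contradiction. Suppose $Z\subseteq X_{cs}^-$ is a nonzero finite $\Lambda$-submodule, killed by $p^a$. Write $\pi_n:X_{cs}^-\to A_n$ for the projections and pick a nonzero $z=(z_n)_n\in Z$, so $z_n=\pi_n(z)$ and $N_n(z_{n+1})=z_n$. Since the inverse limit is separated, the decreasing groups $Z\cap\ker\pi_n$ have intersection $0$, so $\pi_n|_Z$ is injective for $n$ large; in particular $z_n\neq 0$ for such $n$. Because $Z$ is finite, $\omega_n$ kills $Z$ for $n\gg 0$, hence by $\Lambda$-equivariance of $\pi_{n+1}$ the element $\omega_n$ kills $\pi_{n+1}(Z)$, so on $\pi_{n+1}(Z)$ the operator $t=1+\omega_n$ acts trivially and $\nu_{n+1,n}$ acts as multiplication by $p$. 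Combining with (i) gives $\varphi_n(z_n)=\nu_{n+1,n}z_{n+1}=p\,z_{n+1}$, and iterating the injective composites $\varphi_{n+k}\circ\cdots\circ\varphi_n$ yields $p^k z_{n+k}\neq 0$ for every $k$. Taking $k=a$ contradicts $p^a z_{n+a}=0$. Hence $Z=0$.

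The hard part, such as it is, lies not in the $\Lambda$-module formalism (which is purely mechanical once injectivity is in hand) but in verifying the compatibility facts (i) and (ii) in the $S$-class group setting and confirming that the maps $\varphi_n$ of Theorem \ref{inj map} are precisely the extension-of-ideals maps adjoint to the norm maps defining $X_{cs}^-$. Once these are checked, the growth-of-order argument above closes the proof exactly as in Lemma \ref{no finite mudule}.
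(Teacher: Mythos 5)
Your proof is correct and is essentially the paper's own argument: the paper proves Theorem \ref{no finite} by asserting that Washington's derivation of Lemma \ref{no finite mudule} from Lemma \ref{injective map} carries over verbatim with Theorem \ref{inj map} supplying the injectivity, and your write-up fills in exactly those details (the relations $\varphi_n\circ N_n=\nu_{n+1,n}$ on $A_{n+1}$, $\omega_n A_n=0$, $\nu_{n+1,n}$ acting as $p$ on modules killed by $\omega_n$, and the resulting $p$-power-divisibility contradiction for a finite submodule). The only blemish is the harmless indexing slip $\varphi_{n+k}\circ\cdots\circ\varphi_n$ in place of $\varphi_{n+k-1}\circ\cdots\circ\varphi_n$, which does not affect the argument.
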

\begin{proof}
    One can use a similar argument as the proof that lemma \ref{injective map} implies lemma \ref{no finite mudule} in \cite{MR1421575}. I omit the proof here since it is almost the same argument.
\end{proof}
\section{Application to concrete examples}
In this section, we will apply the theorems that we get in the previous section \ref{Applications to number theory} to some concrete examples. In the following examples, they all are the cyclotomic $\Z_p$ extension.   The same strategy can apply to other $\Z_p$ extensions as long as it satisfies the conditions in the theorem. The first three cases are about imaginary quadratic fields. The last case is about cyclotomic fields. In the first case of the imaginary quadratic field, we also develop a numerical criterion, which can help us to compute the $\lambda$ invariant through the computer. The section of numerical criterion is independent from other cases.  It is suggested to skip the numerical criterion subsection for the first time reading.
\subsection{Imaginary Quadratic field}

\subsubsection{case 1}\label{case2}
Let $K$ be an imaginary quadratic field and $h_K=\#Cl(K)$, $p\nmid h_K$, $p$ splits in $K$. Let $K\subset K_1\subset K_2\subset \cdots \subset K_\infty$ be the cyclotomic $\Z_p$ extension of $K$. Let $p\OO_{K_l}=\p_l\tilde{\p}_l$ where $\tilde{\p}_l$ is the complex conjugation of $\p_l$ . Let $(\alpha)= \p_0^{h_K}$, $(\tilde{\alpha})= \tilde{\p}_0^{h_K}$, $\Tilde{\alpha}$ is the conjugation of $\alpha$. By Kummer theory, $\alpha$ corresponds to an element in $H^1(G,\mu_p)$, i.e. $\sigma\rightarrow \sigma (\sqrt[p]{\alpha})/\sqrt[p]{\alpha}$. Recall $\chi$ is a character $\chi: G_{K,S}\xrightarrow{Res} \Gal(K_\infty/K)\cong \Z_p$. 

\begin{theorem}\label{main1}
Let $K$ be an imaginary quadratic field, $p\nmid h_K$, $p$ splits in $K$ and $n\geq 2$. Assume $\lambda\geq n-1$, then $\lambda\geq n\Leftrightarrow$ $n$-fold Massey product $(\chi,\chi,\cdots\chi,\alpha)$ vanishes with respect to a proper defining system. 

\end{theorem}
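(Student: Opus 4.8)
The plan is to deduce Theorem~\ref{main1} from the eigenspace version of the main theorem, Theorem~\ref{action main theom}, applied to $\Delta=\Gal(K/\Q)=\{1,c\}$ with $c$ complex conjugation, taking $\omega$ to be the nontrivial character so that $\varepsilon_\omega=\tfrac{1-c}{2}$ cuts out the minus part. First I would reduce from $\lambda$, the invariant of $X=\varprojlim\Cl(K_l)[p^\infty]$, to the minus part $\lambda^-$: since $K^+=\Q$ and the cyclotomic $\Z_p$-extension of $\Q$ has trivial $\lambda$-invariant, the plus part satisfies $\lambda^+=0$, whence $\lambda=\lambda^-$. By Ferrero--Washington $\mu=0$ (as $K/\Q$ is abelian), so $\mu_{cs}^-=0$ and all invariants are finite; together with Theorem~\ref{no finite}, which says $X_{cs}^-$ has no finite $\Lambda$-submodule, this forces $X_{cs}^-$ to be $\Z_p$-free, verifying the torsion-free hypothesis of Theorem~\ref{action main theom}.

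Next I would establish the two arithmetic inputs. Because $p\nmid h_K$ we have $\Cl_S(K)[p]=0=\Cl_S(K)/p$, so the Kummer sequence \eqref{2} gives $H^2(G_{K,S},\mu_p)\cong Br(\OO_K[1/p])[p]\cong\F_p$ (using $\#S=2$). Tracking the $c$-action through \eqref{2}, complex conjugation interchanges the two primes above $p$ and hence acts by $-1$ on $Br(\OO_K[1/p])[p]$, so $\varepsilon_\omega H^2(G_{K,S},\mu_p)\cong\F_p$ and $\dim_{\F_p}\varepsilon_\omega Br(\OO_K[1/p])[p]=1$. Theorem~\ref{action main theom} then yields $\lambda_{cs}^-=\min\{n:\varepsilon_\omega\Psi^{(n)}\neq0\}-1$. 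To pass from $\lambda_{cs}^-$ to $\lambda^-$ I would use the exact sequence $0\to\varprojlim D_l^-\to X^-\to X_{cs}^-\to0$ together with Gold's size formula (Theorem~\ref{inj map}): here $K_l^+=\Q_l$ has a single prime over $p$ that splits in $K_l/K_l^+$, so $s=1$ and $\#D_m^-=p^{\,m}\#D_0^-$; hence $\varprojlim D_l^-$ has $\lambda$-invariant $\#S-1=1$ and $\lambda^-=\lambda_{cs}^-+1$. Combining gives $\lambda=\lambda^-=\min\{n:\varepsilon_\omega\Psi^{(n)}\neq0\}$.

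Finally I would translate this numerical condition into Massey products. Under the standing hypothesis $\lambda\geq n-1$ we have $\varepsilon_\omega\Psi^{(m)}=0$ for all $m\leq n-2$, so $\lambda\geq n$ is equivalent to $\varepsilon_\omega\Psi^{(n-1)}=0$. By Theorem~\ref{reduce} the map $\Psi^{(n-1)}$ now factors $\F_p$-linearly through the class $[\psi_0]\in H^1(G_{K,S},\mu_p)$, and by Theorem~\ref{LLSWW} its value is precisely the $n$-fold Massey product $(\chi,\dots,\chi,\psi_0)$ relative to the proper defining system. It remains to see that $\alpha$ detects the minus eigenspace of $H^1(G_{K,S},\mu_p)\cong\OO_{K,S}^*/p$: by Dirichlet the $S$-units have rank $2$ with basis the images of $\alpha$ and $\tilde\alpha=c(\alpha)$, which $c$ interchanges, so the minus part is spanned by $[\alpha]-[\tilde\alpha]$. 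Since $\chi$ is $\Delta$-fixed (as $K_\infty/\Q$ is abelian) and $H^2$ is purely minus, the generalized Bockstein map is $\Delta$-equivariant, giving $\Psi^{(n-1)}([\tilde\alpha])=-\Psi^{(n-1)}([\alpha])$ and hence $\Psi^{(n-1)}([\alpha]-[\tilde\alpha])=2\,\Psi^{(n-1)}([\alpha])$. As $p$ is odd, $\varepsilon_\omega\Psi^{(n-1)}=0$ if and only if $\Psi^{(n-1)}([\alpha])=0$, i.e. if and only if the Massey product $(\chi,\dots,\chi,\alpha)$ vanishes. This produces $\lambda\geq n\Leftrightarrow(\chi,\dots,\chi,\alpha)=0$.

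I expect the main obstacle to be the equivariance bookkeeping in the middle step: one must confirm that the Kummer sequences \eqref{1} and \eqref{2} are $\Delta$-equivariant for the natural (cyclotomically twisted) action on $\mu_p$-cohomology, compute the $c$-action on $Br(\OO_K[1/p])[p]$ and on $\OO_{K,S}^*/p$ correctly, and verify that the eigenspace generator is the class of $\alpha$ appearing in Theorem~\ref{LLSWW}. The reduction $\lambda=\lambda^-$ and the evaluation of the $\lambda$-invariant of $\varprojlim D_l^-$ through Gold's theorem are the remaining delicate points, although both are classical.
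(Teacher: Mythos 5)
Your proposal is correct in substance, and its skeleton coincides with the paper's at most points: Gold's Theorem \ref{inj map} with $s=1$ and the exact sequence $0\to\varprojlim D_l\to X\to X_{cs}\to 0$ to get $\lambda=\lambda_{cs}+1$; the sequence \eqref{2} with $p\nmid h_K$ to get $H^2(G_{K,S},\mu_p)\cong Br(\OO_K[1/p])[p]\cong\F_p$; and Theorems \ref{reduce} and \ref{LLSWW} for the final translation into Massey products. You diverge at two points, one organizational and one substantive. Organizationally, the paper applies the plain Theorem \ref{main theom}: since $\Cl(K_l)[p^\infty]^+=0$ (Washington, Prop.~13.22), every module in sight already equals its minus part and no eigenspace machinery is needed; you instead route through the equivariant Theorem \ref{action main theom} with $\Delta=\Gal(K/\Q)$, which is legitimate (the hypotheses $p\nmid\#\Delta=2$ and $\Gal(K_\infty/\Q)\cong\Delta\oplus\Z_p$ hold) but costs you the equivariance bookkeeping you flag, plus one step you should make explicit: Theorem \ref{reduce} assumes the \emph{full} vanishing $\Psi^{(i)}=0$ for $i<n-1$, while your running hypothesis only gives $\varepsilon_\omega\Psi^{(i)}=0$; the upgrade is immediate from your own observations ($\Psi^{(i)}$ is $\Delta$-equivariant and $H^2(G_{K,S},\mu_p)\otimes I^i/I^{i+1}$ is purely minus, so $\Psi^{(i)}$ automatically kills the plus part), but it must be said before \ref{reduce} is invoked. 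Substantively, to discard $\tilde\alpha$ the paper proves the relation $\Psi^{(n-1)}([\alpha])+\Psi^{(n-1)}([\tilde\alpha])=h_K\Psi^{(n-1)}([\pm p])=0$ arithmetically: $\alpha\tilde\alpha=\pm p^{h_K}$, and $p$ has the $p$-cyclic Massey vanishing property because the norm-coherent cyclotomic elements $\eta_l=\Nrm_{\Q(\mu_{p^{l+1}})/\Q_l}(1-\zeta_{p^{l+1}})$ give an element of $\varprojlim\OO_{K_l,S}$ projecting to $p$ (Theorems \ref{cv in NT} and \ref{cyclicvanish}). You obtain the same cancellation purely formally from equivariance, $\Psi^{(n-1)}([\tilde\alpha])=c\,\Psi^{(n-1)}([\alpha])=-\Psi^{(n-1)}([\alpha])$, since $c$ swaps the two local invariants and hence acts by $-1$ on $Br(\OO_K[1/p])[p]$, with $p$ odd making the resulting factor $2$ invertible. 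Your route is softer --- no cyclotomic units and no Theorem \ref{cv in NT} at all --- while the paper's buys a reusable arithmetic fact (the vanishing property of $p$, reused verbatim in its cases 2 and 3) and avoids having to check $\Delta$-equivariance of the Kummer sequences \eqref{1} and \eqref{2} and of the Brauer identification. One small point in your favor: you invoke Ferrero--Washington for $\mu=0$ together with Theorem \ref{no finite} to verify the torsion-freeness hypothesis of the main theorem, a step the paper leaves largely implicit.
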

\begin{remark}
    It is well known that $\lambda\geq 1$ is always true. Hence $\lambda \geq 2\Leftrightarrow$ the cup product $\chi\cup \alpha=0$. Later, in the section on numerical criterion, we will show that it is easy to see that $\chi\cup \alpha=0 \Leftrightarrow \log_p\alpha \equiv 0 \mod{p^2}$, where $\log_p$ is the $p$-adic log. And this is a version of Gold's criterion \cite{Gold}. Our theorem gives a new proof of Gold's criterion and generalizes the criterion.
\end{remark}

\begin{proof}[proof of theorem \ref{main1}] 
\hfill
  
We have that $K$ is an imaginary quadratic field and $K_\infty/K$ is a cyclotomic $\Z_p$ extension. Hence, it satisfies the assumption in theorem \ref{inj map} and theorem \ref{no finite} and we will use the same notation as them. By using Proposition 13.22 in \cite{MR1421575}, we know $\Cl(K_l)[p^\infty]^+=0$. Hence $D_l^-=D_l$ and $\Cl(K_l)[p^\infty]^-=\Cl(K_l)[p^\infty]$. By definition, we have that $D_l^-$ is generated by one element $ \p_l/\tilde{\p}_l$. Hence $D_l=D_l^-$ is a cyclic group. There is only one prime in $K_l^+$ lying over $p$ that splits in $K_l/K_l^+$. Thus $\#D_l= p^l\#D_0=p^l$. We have $D_l=\Z/p^l\Z$. We have the following exact sequence:
\[
0\rightarrow D_l\rightarrow Cl(K_l)[p^\infty]\rightarrow  Cl_S(K_l)[p^\infty]\rightarrow 0
\]
Take the inverse limit, we have
\[
0\rightarrow \Z_p \rightarrow X \rightarrow  X_{cs}\rightarrow 0
\]
By lemma \ref{no finite mudule} and theorem \ref{no finite}, we know that $X$ and $X_{cs}$ have no finite $\Lambda$-module. View them as $\Z_p$ module, we have $Rank_{\Z_p}\Z_p-Rank_{\Z_p}X+Rank_{\Z_p}X_{cs}=0$. So $\lambda=\lambda_{cs}+1$.

 By the exact sequence \eqref{2}, we have
 \[  0\rightarrow \Cl_S(K)/p =0 \rightarrow H^2(G_{K,S},\mu_p)\rightarrow Br(\OO_K[1/p])[p]=\F_p \rightarrow 0 
\]
Hence $H^2(G_{K,S},\mu_p)\cong F_p$. Thus, our case satisfies all conditions in the theorem \ref{main theom}. We have $\lambda_{cs}=\min\{n|\Psi^{(n)}\neq 0\}-2+1$. Therefore, we have $\lambda=\min\{n|\Psi^{(n)}\neq 0\}$. Hence $\lambda \geq n \Leftrightarrow \Psi^{(i)}=0$ for all $0\leq i< n $.
 
   By exact sequence \eqref{1} and $\Cl(K)[p]=0$,   $H^1(G_{K,S},\mu_p)\cong \OO_{K_S}^*/p=<\alpha,\tilde{\alpha}>$. Assume $\Psi^{(i)}=0 $ for all $ 0\leq i< n-1$ now, then by theorem \ref{reduce}, $\Ima\Psi^{(n-1)}$ is generated by $\Psi^{(n-1)}([\alpha]) $ and $\Psi^{(n-1)}([\tilde{\alpha}])$. 
    
   Let $\zeta_{p^l}$ be the primitive $p^l$-th root of unit. Let $\Q_{l-1}$ be the unique subfield of $\Q(\mu_{p^l})$ such that $\Gal(\Q_{l-1}/\Q)=\Z/{p^{l-1}\Z}$. Then $K_l=K\Q_l$. We have \[p=\Nrm_{\Q(\mu_{p^n})/\Q}(1-\zeta_{p^l})\] 
   and \[1-\zeta_{p^l}=\Nrm_{\Q(\mu_{p^{l+1}})/\Q(\mu_{p^{l}})}(1-\zeta_{p^{l+1}}).\]
     Let \[\eta_l\coloneqq \Nrm_{\Q(\mu_{p^{l+1}})/\Q_l}(1-\zeta_{p^{l+1}}),\] then \[p=\Nrm_{\Q_l/\Q}(\eta_l) ,\eta_l=\Nrm_{\Q_{l+1}/\Q_l}(\eta_{l+1}).\]  
     Since $K_l=K\Q_l$ and $K\cap \Q_l=\Q$, then \[\Gal(K_l/K)=\Gal(\Q_l/\Q),\Gal(K_{l+1}/K_l)=\Gal(\Q_{l+1}/\Q_l).\] So $p=\Nrm_{K_l/K}(\eta_l)$ and $\eta_l=\Nrm_{K_{l+1}/K_l}(\eta_{l+1})$. We know \[\eta_l=\Nrm_{\Q(\mu_{p^{l+1}})/\Q_l}(1-\zeta_{p^{l+1}})\in \OO_{\Q_l,S}\subset \OO_{K_l,S}.\] Hence the sequence $(\eta_l)_l \in \varprojlim \OO_{K_l,S}$. By theorem \ref{cv in NT}, we know $p$ has $p$ cyclic Massey product vanishing property. 

   On the other hand, $\alpha \Tilde{\alpha}=\pm p^{h_K}$. Hence,\[\Psi^{(n-1)}([\alpha])+\Psi^{(n-1)}([\tilde{\alpha}])=h_K \Psi^{(n-1)}([\pm p])=0.\] So $\Ima\Psi^{(n-1)}$ is generated by $\Psi^{(n-1)}([\alpha]) $. 
   
   We have $\Psi^{(n-1)}=0$ if and only if $\Psi^{(n-1)}([\alpha])=0 $ i.e. the Massey product $(\chi^{(n-1)},\alpha)$ relative to a proper defining system vanishes by theorem \ref{LLSWW}.

\end{proof}

\begin{remark}\label{small remark}
        Suppose we know $\lambda<p^l$ in advance. Assume $\lambda\geq n-1$, then $\lambda\geq n\Leftrightarrow$ $n$-fold Massey product $(\chi,\chi,\cdots\chi,\alpha)=0$ with respect to a proper defining system, where we could take $\chi: G\rightarrow \Gal(K_l/K)\cong \Z/p^l\Z$ instead of $\chi: G\rightarrow \Gal(K_\infty/K)\cong \Z_p$ as above. The proof is similar by using Theorem \ref{main theom small} instead of Theorem \ref{main theom}. We will use this idea to develop numerical criterion.
\end{remark}

\subsubsection{numerical criterion}\label{numerical criterion1}

In this subsection, we will translate the Massey product language in the previous case \ref{case2} to numerical criterion. One is suggested to skip this subsection for the first time reading. It does not influence the reading for other cases.

Let $K$ be an imaginary quadratic field, $p\nmid h_K$, $p$ splits in $K$. Assume we know $\lambda<p$ in advance. By the remark \ref{small remark}, we can take $G=G_{K,S}$, $N=G_{K_1,S}$, $\chi: G\rightarrow G/N\cong \F_p$, $\Omega=\F_p[G/N]$ through this subsection. Now, we can state our numerical criterion: 
\begin{enumerate}
    \item The Iwasawa invariant $\lambda\geq1$, always true.
    \item The Iwasawa invariant $\lambda \geq 2$ $\Leftrightarrow \log_p\alpha \equiv 0 \mod{p^2}$, where $\log_p$ is the $p$-adic log.

   \item  Assume $\lambda\geq 2$ is true, then $ \chi\cup \alpha=0\Longleftrightarrow \exists \beta \in K^*_1$ s.t. $\Nrm_{K_1/K}(\beta)=\alpha$. Define $A_1'=\prod_{i=0}^{p-1}\sigma^i(\beta^i)\in K_1^*$ , where $\sigma$ is the generator of the group $G/N=\Gal(K_1/K)\cong \F_p$ such that $\chi(\sigma)=1$.

   Claim: There exists $ \alpha_1 \in K^*$ s.t. $ v_{\p}(\alpha_1 A_1')\equiv 0 \mod{ p} $ for all primes $\p$ in $K$ such that $ \p\nmid p$, where $ v_{\p}$ is the valuation corresponding to the prime ideal $\p$.

   Then $\lambda \geq 3\Longleftrightarrow \chi\cup \alpha_1=0\Longleftrightarrow \log_p \alpha_1 \equiv 0 \mod{p^2}$.

   \item Assume $\lambda \geq 3$, then $\chi \cup \alpha_1=0\Longleftrightarrow \exists \beta_1 \in K^*_1$ s.t. $\Nrm_{K_1/K}(\beta_1)=\alpha_1$,  Define $A_2'=\prod_{i=0}^{p-1}\sigma^{i}(\beta^{i(i-1)/2})\in K_1^*$ and $B_1'=\prod_{i=0}^{p-1}\sigma^i(\beta_1^i)\in K_1^*$.

   Claim: There exists $ \alpha_2 \in K^*$ s.t. $ v_{\p}(\alpha_2 A_2'B_1')\equiv 0 \mod{ p} $ for all primes $\p$ in $K$ such that $ \p\nmid p$.

   Then $\lambda \geq 4\Longleftrightarrow \chi\cup \alpha_2=0\Longleftrightarrow \log_p \alpha_2 \equiv 0 \mod{p^2}$.

   \item continues in a similar way $\cdots$

\end{enumerate}

In the remaining subsection, we will explain the reason for the numerical criterion. Here is the idea. To prove the Massey product relative to the proper defining system $\Bar
{\rho}_n:G_K\rightarrow\Bar{U}_{n+1}(\F_p),n\leq p$ vanishes, 
\[\Bar{\rho}_n=\begin{bmatrix}  
    1 & \chi &\binom{\chi}{2}&\binom{\chi}{3}&\binom{\chi}{4}&\cdots & *\\ 
    0 &1& \chi &\binom{\chi}{2}&\binom{\chi}{3}&\cdots & \psi_{n-2}\\
    \vdots& \vdots& \vdots& \vdots& \vdots& \ddots& \vdots\\
   0&0&0& 1&\chi&\binom{\chi}{2}&\psi_2\\
   0&0&0&0&1&\chi&\psi_1\\
   0&0&0&0&0&1&\psi_0\\
   0&0&0&0&0&0&1
    \end{bmatrix}
\]
we would like to construct the cochain $\psi_{n-1}$ explicitly to fill the $*$ spot. We first construct $\psi_{n-1}'$ when we view the Massey product over $G_K=\Gal(K^{sep}/K) $ instead of $G_{K,S}$. In this case, it does not matter if we assume that $K$ has $p$-th root of units. Hence, we can exploit the classical results and generalize them. Then we compare the Massey product when we view it over $G_K$ and $G_{K,S}$ differently. The difference of $ \psi_{n-1}$ and $\psi_{n-1}'$ is an element $\alpha_{n-1}$ in $K$. Finally, to check the Massey product is zero over $G_{K,S}$, we restrict the Massey product to $G_{K_{\p_0}}$, where $ K_{\p_0}$ is the completion of $K$ at prime $\p_0$.

Next, we introduce some classical results. Let $K$ be any field (not necessarily a number field) such that $char(K)\neq p$. Let $G_K=\Gal(K^{sep}/K)$ be the absolute Galois group. Let $\zeta_p$ be the $p$-th root of the unit. Then $[K(\zeta_p):K]$ has degree which is prime to $p$. Hence 
\[Cor\circ Res: H^i(G_K,\mu_p)\rightarrow H^i(G_{K(\zeta_p)},\mu_p)\rightarrow  H^i(G_K,\mu_p)\]
is an isomorphism. Therefore, $ Res: H^i(G_K,\mu_p)\rightarrow H^i(G_{K(\zeta_p)},\mu_p)$ is an injective map. To consider the Massey product relative to certain defining system vanishing in $H^2(G_K,\mu_p)$, we just need to restrict all cochains to $G_{K(\zeta_p)}$ and consider the Massey product vanishing inside $H^2(G_{K(\zeta_p)},\mu_p)$. It does not matter if we assume $\zeta_p\in K$. Now we can assume that $\zeta_p\in K$. In other words, the action $G_K$ on $\mu_p$ is trivial. We can view $\mu_p$ as $\F_p$ by identifying $\zeta_p\in 
\mu_p$ with $1\in \F_p$.

As mentioned before, to give a defining system is the same to give a homomorphism $\bar{\rho}: G_K\rightarrow \bar{U}_{n+1}(\mu_p)=\bar{U}_{n+1}(\F_p)$. The Massey product vanishing is equivalent to that $ \bar{\rho}$ can be lifted to a homomorphism $ \rho: G_K\rightarrow U_{n+1}(\mu_p)$. Let $K_{\bar{\rho}}$ be the subfield fixed by $\ker\bar{\rho}$. Then the Massey product $ (\chi_1,\chi_2,\cdots,\chi_n)_{\bar{\rho}}$ vanishing is equivalent to that we can extend the Galois extension $K_{\bar{\rho}}/K $ to a Galois extension $ K_{\rho}/K$ such that 
\[
\begin{tikzcd}
\Gal(K_{\rho}/K) \arrow[r, hook] \arrow[d, two heads]
& U_{n+1}(\mu_p)  \arrow[d, two heads] \\
\Gal(K_{\bar{\rho}}/K)  \arrow[r, hook]
&   \bar{U}_{n+1}(\mu_p)
\end{tikzcd}
\]
is commutative.

To understand this better, we first consider 2-fold Massey products i.e. cup products. Let $a,b$ be two linearly independent elements in $F^*/(F^*)^p$ corresponding to $\chi_a,\chi_b\in H^1(G_K,\mu_p)\cong F^*/(F^*)^p$. Then $K(a^{1/p})$ is the subfield fixed by group $\ker\chi_a$ and $K(b^{1/p})$ is the subfield fixed by group of $\ker\chi_b$. The cup product $ \chi_a\cup \chi_b=0$ is equivalent that $\exists \beta \in K(a^{1/p})$ such that $ \Nrm_{K(a^{1/p})/K}(\beta)=b$. Translating in the language of Massey product, we have a defining system 
\[\Bar{\rho}: G_K\rightarrow \Bar{U}_3(\mu_p)\cong \F_p\oplus \F_p, \sigma\rightarrow (\chi_a(\sigma),\chi_b(\sigma))\]
The 2-fold Massey product vanishing is equivalent to that we can extend the Galois extension $K(a^{1/p},b^{1/p})/K$ to a Heisenberg extension $K(a^{1/p},b^{1/p},A^{1/p})/K$ such that $\Gal(K(a^{1/p},b^{1/p},A^{1/p})/K)\cong U_3(\F_p)$. This is a theorem proved by Romyar Sharifi\cite{MR2716836}. Another reference is \cite{MR3614934}:

\begin{theorem}[Sharifi]
    The same notation as before. Assume $\chi_a\cup\chi_b=0 $. Let $A_1=\prod_{i=0}^{p-1}\sigma^j(\beta^j)$ where $\sigma$ is a generator of $\Gal(K(a^{1/p}/K)$ such that $\chi(\sigma)=1$, then $ \sigma(A_1)=A_1\frac{\beta^p}{b}$.
\end{theorem}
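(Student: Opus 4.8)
The plan is to prove the identity by a direct multiplicative computation; the only external input is the norm relation $\Nrm_{K(a^{1/p})/K}(\beta) = b$ guaranteed by the hypothesis $\chi_a \cup \chi_b = 0$. First I would rewrite the definition in the transparent form $A_1 = \prod_{j=0}^{p-1}(\sigma^j\beta)^j$, observing that the $j=0$ factor is trivial, so the product effectively runs over $1 \le j \le p-1$.

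Applying $\sigma$ termwise gives $\sigma(A_1) = \prod_{j=0}^{p-1}(\sigma^{j+1}\beta)^j$. I would then reindex with $k = j+1$; since $\sigma$ has order $p$ in $\Gal(K(a^{1/p})/K)$, the top term $k = p$ wraps around to $\sigma^p\beta = \beta$ and contributes a factor $\beta^{p-1}$, so that $\sigma(A_1) = \beta^{p-1}\prod_{k=1}^{p-1}(\sigma^k\beta)^{k-1}$. Forming the quotient with $A_1 = \prod_{k=1}^{p-1}(\sigma^k\beta)^k$, the exponent on each surviving factor $\sigma^k\beta$ telescopes to $(k-1)-k = -1$, leaving $\sigma(A_1)/A_1 = \beta^{p-1}\prod_{k=1}^{p-1}(\sigma^k\beta)^{-1}$.

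To finish I would feed in the norm hypothesis: since $\prod_{k=0}^{p-1}\sigma^k\beta = \Nrm_{K(a^{1/p})/K}(\beta) = b$, splitting off the $k=0$ factor $\beta$ yields $\prod_{k=1}^{p-1}\sigma^k\beta = b/\beta$. Substituting gives $\sigma(A_1)/A_1 = \beta^{p-1}\cdot\beta/b = \beta^p/b$, which is exactly the asserted formula.

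There is no conceptual obstacle here; the entire content is bookkeeping. The one place to be careful is the index shift: after applying $\sigma$ the product is over shifted exponents, and one must correctly isolate the boundary term $k = p$ and use $\sigma^p = 1$ to fold it back in as $\beta^{p-1}$ before the telescoping is legitimate. Getting that boundary contribution right is precisely what produces the clean factor $\beta^p$ (rather than $\beta^{p-1}$ or $\beta^{p+1}$) in the final answer.
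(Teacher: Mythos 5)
Your proof is correct, and it is essentially the paper's argument: the paper establishes this (in the generalized form of Lemma~\ref{A_n}) via the group-ring identity $(\sigma-1)\bigl(\sum_{i=0}^{p-1} i\sigma^i\bigr) + \sum_{i=0}^{p-1}\sigma^i = p$ applied multiplicatively to $\beta$, together with $\Nrm_{K(a^{1/p})/K}(\beta)=b$, and your reindexing-and-telescoping computation is exactly that identity written out termwise. Your careful handling of the wrap-around term $\sigma^p\beta=\beta$ is the correct bookkeeping and matches the paper's implicit use of $\sigma^p=1$.
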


\begin{theorem}[Sharifi]\label{sharifi explicit}
    The same notation as before. Assume $\chi_a\cup\chi_b=0$. Let $A=fA_1$ where $f\in K^*$, then the homomorphism \[\Bar{\rho}: G_K\rightarrow \Bar{U}_3(\mu_p)\cong \F_p\oplus \F_p, \sigma\rightarrow (\chi_a(\sigma),\chi_b(\sigma))\]
    can be lifted to a Heisenberg extension $\rho:G_K\rightarrow U_3(\mu_p)$ such that $Res_{\ker\chi_a}(\rho_{1,3})=\chi_A $. 
\end{theorem}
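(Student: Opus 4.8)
The plan is to exhibit the lift concretely and then to identify how its $(1,3)$-entry restricts to $\ker\chi_a=G_L$, where $L\coloneqq K(a^{1/p})$. Write $M\coloneqq K(a^{1/p},b^{1/p})$, the fixed field of $\ker\Bar{\rho}$, so $\Gal(M/K)\cong\Bar{U}_3(\mu_p)$. Since $\zeta_p\in K$ we identify $\mu_p$ with $\F_p$ carrying the trivial $G_K$-action. A lift $\rho\colon G_K\to U_3(\mu_p)$ of $\Bar{\rho}$ is the same datum as a $1$-cochain $z=\rho_{1,3}\colon G_K\to\F_p$ satisfying $z(gh)=z(g)+z(h)+\chi_a(g)\chi_b(h)$, i.e. $\delta z=\chi_a\cup\chi_b$ as inhomogeneous $2$-cochains. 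Such a $z$ exists precisely because $\chi_a\cup\chi_b=0$ in $H^2(G_K,\mu_p)$, which is our hypothesis (realized by $\beta$ with $\Nrm_{L/K}(\beta)=b$), so at least one lift is available; the whole point will be to pin down a convenient one.

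First I would record how $z|_{G_L}$ behaves. On $G_L$ the relation above forces $z|_{G_L}$ to be a homomorphism, hence the Kummer character $\chi_{\bar A}$ of some $\bar A\in L^*/(L^*)^p$. Choosing a lift $\Tilde{\sigma}\in G_K$ of $\sigma$ with $\chi_a(\Tilde{\sigma})=1$ and evaluating $z(\Tilde{\sigma}^{-1}\tau\Tilde{\sigma})$ for $\tau\in G_L$ via the cochain relation, a short computation gives $\sigma\cdot(z|_{G_L})=z|_{G_L}-\chi_b|_{G_L}$, i.e. $\sigma(\bar A)/\bar A\equiv b^{-1}\pmod{(L^*)^p}$. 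This is exactly the transformation law satisfied by $A_1=\prod_{j=0}^{p-1}\sigma^j(\beta^j)$, since the preceding theorem gives $\sigma(A_1)=A_1\beta^p/b$. Thus $A_1$ is the natural candidate for the element realized by a lift.

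The central step is to produce one explicit lift $\rho_0$ whose $(1,3)$-entry restricts on $G_L$ to exactly $\chi_{A_1}$. I would do this field-theoretically: set $E\coloneqq M(A_1^{1/p})$ and show $E/K$ is Galois with $\Gal(E/K)\cong U_3(\mu_p)$ realizing $\Bar{\rho}$. Normality follows from $\sigma(A_1)=A_1\beta^p/b$ together with $\beta,b^{1/p}\in E$, which shows every $K$-embedding of $E$ lands in $E$; then identifying generators $\Tilde{\sigma},\Tilde{\tau}$ of $\Gal(E/K)$ and computing the single nontrivial commutator $[\Tilde{\sigma},\Tilde{\tau}]$ — which acts on $A_1^{1/p}$ by a primitive $p$-th root of unity precisely because $\Nrm_{L/K}(\beta)=b$ — forces the extension to be non-degenerate of degree $p$ over $M$, pins the group down to the Heisenberg group, and shows the center $\Gal(E/M)$ is generated by the automorphism scaling $A_1^{1/p}$. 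By construction this makes $(\rho_0)_{1,3}|_{G_L}$ the Kummer character $\chi_{A_1}$.

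Finally I would account for the factor $f$. Two lifts of $\Bar{\rho}$ differ by a homomorphism $G_K\to Z(U_3(\mu_p))=\mu_p$, i.e. by $\chi_f$ for some $f\in K^*$ via Kummer theory; conversely, multiplying $\rho_0$ by the central cocycle $\chi_f$ yields another homomorphism lifting $\Bar{\rho}$ whose $(1,3)$-entry is $(\rho_0)_{1,3}+\chi_f$. Restricting to $G_L$ and using additivity of Kummer characters in the field element, $\chi_{A_1}+\chi_f|_{G_L}=\chi_{A_1 f}=\chi_A$, which is the desired lift. The main obstacle is the central step: verifying that $E=M(A_1^{1/p})$ is Galois over $K$ with Heisenberg Galois group and that the center acts by scaling $A_1^{1/p}$. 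This is where the norm relation $\Nrm_{L/K}(\beta)=b$ and the transformation law of $A_1$ are indispensable, and it is the part that genuinely cannot be reduced to formal cohomological bookkeeping.
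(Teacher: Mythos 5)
Your proposal is correct and takes essentially the same route as the paper: the paper attributes the theorem to Sharifi and proves its generalization in Lemma~\ref{standard}, whose $M_1$ case (explicitly identified there with the proof of this theorem) is exactly your central step — form $K(a^{1/p},b^{1/p},A_1^{1/p})/K$, check normality from $\sigma_a(A_1)=A_1\beta^p/b$, and use the norm relation $\Nrm_{K(a^{1/p})/K}(\beta)=b$ to show the commutator of the lifted generators scales $A_1^{1/p}$ by $\zeta_p$, pinning down the Heisenberg group — while your final twist by the central character $\chi_f$ is precisely the content of the remark following the theorem in the paper. (Your sign convention $\delta z=\chi_a\cup\chi_b$ rather than $-\chi_a\cup\chi_b$ is immaterial here since the class vanishes.)
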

\begin{remark}

   When we have another lifting corresponding to $A'$, the difference between $A$ and $A'$ is an element in $K^*$. 
   If we have another lifting $\rho'$, then $-d\rho_{1,3}=\chi_a\cup\chi_b$ and $-d\rho'_{1,3}=\chi_a\cup\chi_b$. So $d(\rho_{1,3}-\rho'_{1,3})=0$. Hence $\rho_{1,3}-\rho'_{1,3}$ is a cocycle in $\mathcal{C}^1$. There exists $f\in K$ such that $\rho_{1,3}-\rho'_{1,3}=\chi_f$. We have $A=fA'$ up to multiplication by an element of $K(a^{1/p})^{*p}$.

    The theorem gives us an explicit description of $\rho_{1,3}$. And the converse is also true. If we can find such Heisenberg extension $ K(a^{1/p},b^{1/p},A^{1/p})/K$ then the cup product $\chi_a\cup\chi_b$ is zero.
    
\end{remark}

Now, we generalize the idea to give a similar description for Massey product $(\chi^{(n)},\psi_0)$ relative to a proper defining system $\bar{\rho}_{n+1}: G_K\rightarrow \bar{U}_{n+2}(\F_p)$.

Because we use the proper defining system, the $\Ima \bar{\rho}_{n+1} $ is not the whole group $\bar{U}_{n+2}(\F_p)$. We need the following definition and lemma to describe $\Ima \bar{\rho}_{n+1}$.
\begin{definition}
    Define group $M_n:=<s,t_0,t_1,\cdots, t_n\mid s^p=1, t_0^p=1,t_1^p=1,\cdots, t_n^p=1, t_it_jt_i^{-1}t_j^{-1}=1 ,st_0s^{-1}t_0^{-1}=t_1, st_1s^{-1}t_1^{-1}=t_2,st_2s^{-1}t_2^{-1}=t_3,\cdots ,st_{n-1}s^{-1}t_{n-1}^{-1}=t_n,st_ns^{-1}t_n^{-1}=1 >$
\end{definition}
\begin{remark}
    One can check that $M_n$ is a semiproduct. We have $M_n\cong \F_p\ltimes \F_p^{n+1}$, where $\F_p=<s>$ and $\F_p^{n+1}=<t_0,t_1,\cdots,t_n>$.
\end{remark}
\begin{lemma}\label{structure of image}
    Let $\rho_{n+1}: G_K\rightarrow U_{n+2}(\F_p)$ be the homomorphism defined by
    
     \[ \rho_{n+1}=\begin{bmatrix}  
    1 & \chi &\binom{\chi}{2}&\binom{\chi}{3}&\binom{\chi}{4}&\cdots & \psi_n\\ 
    0 &1& \chi &\binom{\chi}{2}&\binom{\chi}{3}&\cdots & \psi_{n-1}\\
    \vdots& \vdots& \vdots& \vdots& \vdots& \ddots& \vdots\\
   0&0&0& 1&\chi&\binom{\chi}{2}&\psi_2\\
   0&0&0&0&1&\chi&\psi_1\\
   0&0&0&0&0&1&\psi_0\\
   0&0&0&0&0&0&1
    \end{bmatrix}\]
     where $\ker\chi\neq \ker\psi_0$.  
   Then $\Ima\rho_{n+1}$ is isomorphic to $M_n$. If we view $\Ima\rho_{n+1} \subset U_{n+2}(\F_p) $ as a subgroup, then we can take
   \[ s=\begin{bmatrix}  
    1 & 1 &0&0&0&\cdots & 0\\ 
    0 &1& 1 &0&0&0 & 0\\
    \vdots& \vdots& \vdots& \vdots& \vdots& \ddots& \vdots\\
   0&0&0& 1&1&0&0\\
   0&0&0&0&1&1&0\\
   0&0&0&0&0&1&0\\
   0&0&0&0&0&0&1
    \end{bmatrix},
        t_0=\begin{bmatrix}  
    1 & 0 &0&0&0&\cdots & 0\\ 
    0 &1& 0 &0&0&0 & 0\\
    \vdots& \vdots& \vdots& \vdots& \vdots& \ddots& \vdots\\
   0&0&0& 1&0&0&0\\
   0&0&0&0&1&0&0\\
   0&0&0&0&0&1&1\\
   0&0&0&0&0&0&1
    \end{bmatrix},\]
\[
     t_1=\begin{bmatrix}  
    1 & 0 &0&0&0&\cdots & 0\\ 
    0 &1& 0 &0&0&0 & 0\\
    \vdots& \vdots& \vdots& \vdots& \vdots& \ddots& \vdots\\
   0&0&0& 1&0&0&0\\
   0&0&0&0&1&0&1\\
   0&0&0&0&0&1&0\\
   0&0&0&0&0&0&1
    \end{bmatrix},\cdots,
     t_n=\begin{bmatrix}  
    1 & 0 &0&0&0&\cdots & 1\\ 
    0 &1& 0 &0&0&0 & 0\\
    \vdots& \vdots& \vdots& \vdots& \vdots& \ddots& \vdots\\
   0&0&0& 1&0&0&0\\
   0&0&0&0&1&0&0\\
   0&0&0&0&0&1&0\\
   0&0&0&0&0&0&1
    \end{bmatrix}
    \]
\end{lemma}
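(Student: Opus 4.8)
The plan is to realize $\Ima\rho_{n+1}$ as an explicit subgroup of $U_{n+2}(\F_p)$ and then identify that subgroup with $M_n$. Write $u(c)$ for the $(n+1)\times(n+1)$ unipotent matrix with $(i,j)$-entry $\binom{c}{j-i}$, so that $u(\chi(g))$ is exactly the upper-left block of the displayed matrix $\rho_{n+1}(g)$. Vandermonde's identity gives $u(c)u(c')=u(c+c')$, hence $c\mapsto u(c)$ is a homomorphism $\F_p\to U_{n+1}(\F_p)$ with $u(1)=I+N$, where $N$ is the single super-diagonal shift, and $u(p)=I$. Each $\rho_{n+1}(g)$ then has block form $\begin{bmatrix} u(\chi(g)) & v(g)\\ 0 & 1\end{bmatrix}$ with $v(g)=(\psi_n(g),\dots,\psi_0(g))^{T}\in\F_p^{n+1}$ (so the bottom coordinate of $v(g)$ is $\psi_0(g)$). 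Consequently $\Ima\rho_{n+1}\subseteq P:=\bigl\{\begin{bmatrix} u(c)& v\\ 0&1\end{bmatrix}: c\in\F_p,\ v\in\F_p^{n+1}\bigr\}$, and a short computation using $u(c)u(c')=u(c+c')$ shows $P$ is a subgroup of $U_{n+2}(\F_p)$ of order $p^{n+2}$.

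First I would check that the displayed matrices lie in $P$ and satisfy the relations of $M_n$. Writing $f_1,\dots,f_{n+1}$ for the standard basis of $\F_p^{n+1}$ indexed by rows, one has $s=\begin{bmatrix} u(1)&0\\0&1\end{bmatrix}$ and $t_k=\begin{bmatrix} I & f_{n+1-k}\\ 0&1\end{bmatrix}$, and the only computation needed is $u(1)f_r=f_r+f_{r-1}$ (with $f_0:=0$). This yields $st_ks^{-1}t_k^{-1}=t_{k+1}$ for $0\le k<n$ and $st_ns^{-1}t_n^{-1}=1$, while $s^p=u(p)=I$, the $t_i$ commute, and $t_i^p=1$; these are precisely the defining relations of $M_n$. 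Hence the assignment on generators extends to a surjection $M_n\twoheadrightarrow P$ (the $t_i$ give all of the $v$-part and $s$ gives the $c$-part, so $s,t_0,\dots,t_n$ generate $P$), and since $\#M_n=p^{n+2}=\#P$ by the semidirect-product description $M_n\cong\F_p\ltimes\F_p^{n+1}$, this surjection is an isomorphism $P\cong M_n$.

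It then remains to prove the essential point, that $\Ima\rho_{n+1}$ is all of $P$ rather than a proper subgroup; this is where the hypothesis $\ker\chi\neq\ker\psi_0$ enters and is the main obstacle. My plan is a Frattini argument: in a finite $p$-group a subset generates the whole group as soon as it generates modulo the Frattini subgroup $\Phi(P)=P^p[P,P]$. I would show $\Phi(P)=V^{(n)}$, where $V^{(n)}$ is the subgroup with $v\in\mathrm{span}(f_1,\dots,f_n)$: the map $q\colon P\to\F_p^2$, $\begin{bmatrix} u(c)&v\\0&1\end{bmatrix}\mapsto(c,v_{n+1})$, is a homomorphism with kernel $V^{(n)}$, so $[P,P]\subseteq V^{(n)}$, while $[s,t_k]=t_{k+1}$ shows $t_1,\dots,t_n\in[P,P]$, giving $[P,P]=V^{(n)}$; and $P^p\subseteq V^{(n)}$ because the bottom coordinate of a $p$-th power is $p\,v_{n+1}=0$. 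Thus $q$ induces $P/\Phi(P)\cong\F_p^2$, and $q\circ\rho_{n+1}=(\chi,\psi_0)\colon G_K\to\F_p^2$. Since $\chi,\psi_0$ are nonzero with $\ker\chi\neq\ker\psi_0$, they are $\F_p$-linearly independent, so $(\chi,\psi_0)$ is surjective; hence $\Ima\rho_{n+1}$ surjects onto $P/\Phi(P)$, and the Frattini argument forces $\Ima\rho_{n+1}=P\cong M_n$ with $s,t_0,\dots,t_n$ as the asserted generators. The subtlety to watch throughout is exactly this dependence on $\ker\chi\neq\ker\psi_0$: without it the top quotient $(\chi,\psi_0)$ need not be surjective and the image can collapse to a proper subgroup of $P$.
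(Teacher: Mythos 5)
Your proposal is correct, and it replaces the paper's key step with a genuinely different argument. Both proofs ultimately identify $\Ima\rho_{n+1}$ with the full group $P$ of block matrices with unipotent binomial block $u(c)$ and arbitrary last column $v\in\F_p^{n+1}$, and both identify $P\cong M_n$; the divergence is in how generation is proved. The paper argues constructively: using $\ker\chi\neq\ker\psi_0$ it picks elements $A,B_0\in\Ima\rho_{n+1}$ whose $(\chi,\psi_0)$-coordinates are $(1,0)$ and $(0,1)$ but whose remaining last-column entries are unknown, sets $s\mapsto A$, $t_0\mapsto B_0$, forms the iterated commutators $B_{k+1}=[A,B_k]$, and leaves as omitted matrix computations both the verification of the $M_n$-relations and the cleanup showing that $A,B_0$ generate the displayed $t_1,\dots,t_n$ (note that $[A,B_0]$ has $v$-part $N v$ with $v$ the starred column of $B_0$, so an inductive correction by powers of the later $B_j$ is genuinely needed there). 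You instead verify the relations only for the clean generators $s,t_0,\dots,t_n$ of $P$, where the computation is a one-line shift identity, and then prove $\Ima\rho_{n+1}=P$ by a Frattini/Burnside-basis argument: $\Phi(P)=[P,P]P^p=\ker q$ for the coordinate map $q=(c,v_{n+1})$, and $q\circ\rho_{n+1}=(\chi,\psi_0)$ is surjective exactly because $\chi,\psi_0$ are nonzero with distinct kernels. This buys a cleaner and fully written-out proof that isolates precisely where the hypothesis enters, and it yields the lemma's second assertion (that the image contains the displayed $s,t_i$ as generators) directly, whereas the paper obtains it only after the unwritten cleanup; the paper's route, in exchange, exhibits explicit elements of the image as commutators, which is in the spirit of its later field-theoretic constructions. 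Two shared implicit hypotheses are worth keeping in mind: one needs $n<p$ (so that the entries $\binom{\chi}{k}$ are well defined mod $p$, $\rho_{n+1}$ is a homomorphism, and $s^p=u(p)=I$), and one needs $\psi_0\neq 0$, which the bare statement $\ker\chi\neq\ker\psi_0$ does not force but which the paper's own proof assumes via $[G_K:\ker\psi_0]=p$; your explicit "nonzero" reading matches the intended one.
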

\begin{proof}
    Since $\ker\chi\neq \ker\psi_0$, $[G_K:\ker\chi]=p $ and $ [G_K:\ker\psi_0]=p$ , so  $ \ker\chi$ and $\ker\psi_0$ do not contain each other.
    Hence $\Ima \rho_{n+1}$ contains 
    \[ A=\begin{bmatrix}  
    1 & 1 &0&0&0&\cdots & *\\ 
    0 &1& 1 &0&0&0 & *\\
    \vdots& \vdots& \vdots& \vdots& \vdots& \ddots& \vdots\\
   0&0&0& 1&1&0&*\\
   0&0&0&0&1&1&*\\
   0&0&0&0&0&1&0\\
   0&0&0&0&0&0&1
    \end{bmatrix},
        B_0=\begin{bmatrix}  
    1 & 0 &0&0&0&\cdots & *\\ 
    0 &1& 0 &0&0&0 & *\\
    \vdots& \vdots& \vdots& \vdots& \vdots& \ddots& \vdots\\
   0&0&0& 1&0&0&*\\
   0&0&0&0&1&0&*\\
   0&0&0&0&0&1&1\\
   0&0&0&0&0&0&1
    \end{bmatrix},\] 
One can define $B_1=AB_0A^{-1}B_0^{-1},\cdots, B_n=AB_{n-1}A^{-1}B_{n-1}^{-1}$. We can directly map $s$ to $A$ and $t_0$ to $B_0$. This gives us an isomorphism between $M_n$ and $\rho_{n+1}$. To check the group relations, it is a trivial calculation of matrices. We omit the calculation here.

On the other hand, we can use $A, B_0$ generating the following two matrices:
\[
     t_1=\begin{bmatrix}  
    1 & 0 &0&0&0&\cdots & 0\\ 
    0 &1& 0 &0&0&0 & 0\\
    \vdots& \vdots& \vdots& \vdots& \vdots& \ddots& \vdots\\
   0&0&0& 1&0&0&0\\
   0&0&0&0&1&0&1\\
   0&0&0&0&0&1&0\\
   0&0&0&0&0&0&1
    \end{bmatrix},\cdots,
     t_n=\begin{bmatrix}  
    1 & 0 &0&0&0&\cdots & 1\\ 
    0 &1& 0 &0&0&0 & 0\\
    \vdots& \vdots& \vdots& \vdots& \vdots& \ddots& \vdots\\
   0&0&0& 1&0&0&0\\
   0&0&0&0&1&0&0\\
   0&0&0&0&0&1&0\\
   0&0&0&0&0&0&1
    \end{bmatrix}
    \]
  This gives us the description of $\Ima \rho_{n+1}$ as in the lemma.  
\end{proof}
  \begin{remark}
     If we take subgroup $V_{n+1} \subset  U_{n+2}(\F_p)$ whose elements are of the form:
    \[ \begin{bmatrix}  
    1 & 0 &0&0&0&\cdots & *\\ 
    0 &1& 0 &0&0&0 & *\\
    \vdots& \vdots& \vdots& \vdots& \vdots& \ddots& \vdots\\
   0&0&0& 1&0&0&*\\
   0&0&0&0&1&0&*\\
   0&0&0&0&0&1&*\\
   0&0&0&0&0&0&1
    \end{bmatrix}
    \]
then we have an exact sequence, 
\[ 0\rightarrow V_{n+1}\rightarrow \Ima\rho_{n+1}\rightarrow \Ima\rho_{n+1}/V_{n+1}\rightarrow 0\]
Easy to see that $V_{n+1}\cong \F_p^{n+1}$ is generated by $t_0,t_1,\cdots, t_n$ and $\Ima\rho_{n+1}/V_{n+1}\cong \F_p $ is generated by $s$. And $\Ima\rho_{n+1}\cong \F_p\ltimes V_{n+1} $. Hence, to describe the presentation of $\Ima\rho_{n+1} $, all we need is to describe the presentations of $V_{n+1}$ and $\F_p $ and the action of $ \F_p$ on $V_{n+1}$. These are how we define $M_n$.  
  \end{remark}
  
\begin{remark}\label{s,t}
    By the lemma \ref{structure of image}, if we can find a Galois extension $L/K$ such that $\Gal(L/K)\cong M_n$, then we can determine a proper defining system $\Bar{\rho}_{n+2}: G_K\rightarrow \Bar{U}_{n+3} $. And the converse is also true. 
    
    To determine whether a group is isomorphic to $M_n$, we only need to determine what elements are mapped to $s, t_0$ and check all relations if we know the group has the same size as $M_n$.
\end{remark}
As before, Assume $K$ is a field that contains $p$-th root of the unit. Let $\chi\in H^1(G_K,\mu_p)\cong \Hom(G_K,\F_p)$ and $\psi_0\in H^1(G_K,\mu_p)\cong \Hom(G_K,\F_p)$. Let $K(a^{1/p})$ be the fixed subfield of $\ker\chi$ and $K(b^{1/p})$ be the fixed subfield of $\ker\psi_0$ and assume they are different field.  Let $\sigma_a$ and $\sigma_b $ be the generator of $\Gal(K(a^{1/p},b^{1/p})/K)$ such that
\begin{align*}
   \sigma_a(a^{1/p})&=\zeta_p a^{1/p}& \sigma_a(b^{1/p})&=b^{1/p} \\
   \sigma_b(a^{1/p})&= a^{1/p}& \sigma_b(b^{1/p})&=\zeta_p b^{1/p}
\end{align*}

\begin{lemma}\label{A_n}
 Assume $\chi\cup \psi_0=0$,i.e. there exists $\beta\in K(a^{1/p})$ such that $\Nrm_{K(a^{1/p})/K}(\beta)=b$. Define \[A_1=\prod_{i=0}^{p-1}\sigma_a^i(\beta^i), A_2=\prod_{i=0}^{p-1}\sigma_a^i(\beta^{\frac{i(i-1)}{2}}), A_3=\prod_{i=0}^{p-1}\sigma_a^i(\beta^{\frac{i(i-1)(i-2)}{3}}), \cdots , A
_n=\prod_{i=0}^{p-1}\sigma_a^i(\beta^{\binom{i}{n}})\] where $n<p$, 
Then
\[
\frac{\sigma_a(A_1)}{A_1}b=\beta^p, \frac{\sigma_a(A_2)}{A_2}\sigma_a(A_1)=\beta^{\frac{p(p-1)}{2}}
, \cdots , \frac{\sigma_a(A_n)}{A_n}\sigma_a(A_{n-1})=\beta^{\binom{p}{n}}
\]
\end{lemma}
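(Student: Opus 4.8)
The plan is to verify each asserted identity directly from the definitions, using a single reindexing of the cyclic product together with two applications of Pascal's rule; no genuine induction is required, since the $n$-th relation links only $A_n$ and $A_{n-1}$, which I can expand independently. Throughout write $\sigma = \sigma_a$ for the chosen generator of $\Gal(K(a^{1/p})/K)\cong \Z/p\Z$, so that $\sigma^p = \mathrm{id}$ on $K(a^{1/p})$, and read $A_n = \prod_{i=0}^{p-1}\sigma^i(\beta^{\binom{i}{n}})$ as the uniform definition. The first thing to record is the boundary value $A_0 = \prod_{i=0}^{p-1}\sigma^i(\beta^{\binom{i}{0}}) = \prod_{i=0}^{p-1}\sigma^i(\beta) = \Nrm_{K(a^{1/p})/K}(\beta) = b$ by hypothesis; since $b \in K$ is $\sigma$-fixed, $\sigma(A_0) = b$, which will exhibit the stated relation $\frac{\sigma(A_1)}{A_1}b = \beta^p$ as the $n=1$ instance of the general identity $\frac{\sigma(A_n)}{A_n}\sigma(A_{n-1}) = \beta^{\binom{p}{n}}$.

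For the general identity I would apply $\sigma$ termwise to $A_n$ and substitute $j = i+1$, obtaining $\sigma(A_n) = \prod_{j=1}^{p}\sigma^j(\beta^{\binom{j-1}{n}})$. Because $\sigma^p = \mathrm{id}$, the top term $j=p$ folds to the scalar factor $\beta^{\binom{p-1}{n}}$, while in the denominator $A_n$ the term $i=0$ disappears since $\binom{0}{n}=0$ for $n\geq 1$. Collecting the matched factors for $1\leq j \leq p-1$ then gives $\frac{\sigma(A_n)}{A_n} = \beta^{\binom{p-1}{n}}\prod_{j=1}^{p-1}\sigma^j(\beta^{\binom{j-1}{n}-\binom{j}{n}})$, and Pascal's rule $\binom{j}{n} = \binom{j-1}{n}+\binom{j-1}{n-1}$ rewrites the inner exponent as $-\binom{j-1}{n-1}$, leaving the product $\prod_{j=1}^{p-1}\sigma^j(\beta^{-\binom{j-1}{n-1}})$.

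The concluding step is to identify this surviving product with $A_{n-1}$. Running the identical reindexing on $A_{n-1} = \prod_{i=0}^{p-1}\sigma^i(\beta^{\binom{i}{n-1}})$ shows $\sigma(A_{n-1}) = \beta^{\binom{p-1}{n-1}}\prod_{j=1}^{p-1}\sigma^j(\beta^{\binom{j-1}{n-1}})$, so that $\prod_{j=1}^{p-1}\sigma^j(\beta^{-\binom{j-1}{n-1}}) = \beta^{\binom{p-1}{n-1}}/\sigma(A_{n-1})$. Substituting this back and applying Pascal's rule once more in the form $\binom{p-1}{n}+\binom{p-1}{n-1} = \binom{p}{n}$ produces $\frac{\sigma(A_n)}{A_n} = \beta^{\binom{p}{n}}/\sigma(A_{n-1})$, which rearranges to the desired $\frac{\sigma(A_n)}{A_n}\sigma(A_{n-1}) = \beta^{\binom{p}{n}}$.

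I do not expect any conceptual obstacle here: the entire argument takes place inside the multiplicative group $K(a^{1/p})^*$, and the binomial coefficients enter purely as ordinary integer exponents (the hypothesis $n<p$ is needed only so that the elements $A_n$ under consideration are the intended ones). The single point demanding care is the bookkeeping of the cyclic product, namely folding the $j=p$ boundary term through $\sigma^p = \mathrm{id}$ and discarding the vanishing $i=0$ term; once these are handled, the two Pascal identities force the result.
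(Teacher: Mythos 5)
Your proof is correct and takes essentially the same approach as the paper: the paper's proof consists of asserting the group-ring identities $(\sigma_a-1)\bigl(\sum_{i=0}^{p-1}\tbinom{i}{n}\sigma_a^i\bigr)+\sigma_a\sum_{i=0}^{p-1}\tbinom{i}{n-1}\sigma_a^i=\tbinom{p}{n}$ as ``easy to check,'' and your reindexing $j=i+1$, folding of the $j=p$ term via $\sigma^p=\mathrm{id}$, discarding of the vanishing $i=0$ term, and two applications of Pascal's rule are precisely that check, written multiplicatively in $K(a^{1/p})^*$. Your additional observation that $A_0=\Nrm_{K(a^{1/p})/K}(\beta)=b$, which subsumes the first relation under the general identity, is a harmless refinement of the same argument.
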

\begin{proof}
    Easy to check the following equation:
    \[(\sigma_a-1)(\sum_{i=0}^{p-1}i\sigma_a^i) +\sum_{i=0}^{p-1}\sigma_a^i=p \]
    \[
    (\sigma_a-1)(\sum_{i=0}^{p-1}\frac{i(i-1)}{2}\sigma_a^i) +\sigma_a\sum_{i=0}^{p-1}i\sigma_a^i=\frac{p(p-1)}{2}
    \]
    \[\cdots\]
   \[(\sigma_a-1)(\sum_{i=0}^{p-1}\binom{i}{n}\sigma_a^i) +\sigma_a\sum_{i=0}^{p-1}\binom{i}{n-1}\sigma_a^i=\binom{p}{n}   \]
\end{proof}

\begin{lemma}\label{standard}
    Notations as before and $n<p$, then the field extension \[K(a^{1/p},b^{1/p},A_1^{1/p},A_2^{1/p},\cdots, A_n^{1/p})/K\] is a Galois extension. And \[\Gal(K(a^{1/p},b^{1/p},A_1^{1/p},A_2^{1/p},\cdots, A_n^{1/p})/K)\cong M_n\]. Therefore, it corresponds a proper defining system $\Bar{\rho}_{n+2}:G_K\rightarrow \Bar{U}_{n+3}$. And $Res_{\ker\chi}\psi_i=\chi_{A_i}$ for $1\leq i\leq n<p-1$.
\end{lemma}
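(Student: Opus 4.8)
The plan is to induct on $n$, writing $L_n = K(a^{1/p},b^{1/p},A_1^{1/p},\dots,A_n^{1/p})$ and exploiting the fact that $M_n$ is a central extension of $M_{n-1}$: quotienting by the generator $t_n$ identifies $M_n/\langle t_n\rangle$ with $M_{n-1}$, so that $\langle t_n\rangle\cong\F_p$ is central and the tower is built up one $\F_p$-layer at a time. The base case $n=0$ is plain Kummer theory: since $\ker\chi\neq\ker\psi_0$, the classes of $a$ and $b$ are independent in $K^*/(K^*)^p$, so $[L_0:K]=p^2$ and $\Gal(L_0/K)=\langle\sigma_a,\sigma_b\rangle\cong(\Z/p)^2=M_0$. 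Throughout I will use the convention $A_0=b$ together with the key structural facts that $\beta\in K(a^{1/p})$, hence $\sigma_b$ fixes $\beta$ and therefore fixes every $A_i=\prod_j\sigma_a^j(\beta^{\binom{j}{i}})$, while $\sigma_a$ moves the $A_i$ precisely through the relations of Lemma \ref{A_n}.

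Assume the statement for $n-1$, so $L_{n-1}/K$ is Galois with $\Gal(L_{n-1}/K)\cong M_{n-1}$ under $s\mapsto\tilde\sigma_a$, $t_0\mapsto\tilde\sigma_b$, $t_i=[s,t_{i-1}]$, where each $t_i$ scales $A_i^{1/p}$ by $\zeta_p$ and fixes the lower generators. First I would check that $L_n/K$ is Galois. By Lemma \ref{A_n}, $\sigma_a(A_n)=A_n\,\beta^{\binom{p}{n}}\,\sigma_a(A_{n-1})^{-1}$; since $1\le n<p$ we have $p\mid\binom{p}{n}$, so $\beta^{\binom{p}{n}}\in K(a^{1/p})^{*p}\subset L_n^{*p}$, and $\sigma_a(A_{n-1})\in L_n^{*p}$ by induction. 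Hence $\sigma_a(A_n)$ and $A_n$ differ by a $p$-th power in $L_n$, so $\tilde\sigma_a$ extends to $L_n$, while $\sigma_b$ fixes $A_n$ and so $\tilde\sigma_b$ extends acting on $A_n^{1/p}$ by a root of unity. Thus every $K$-embedding of $L_n$ lands in $L_n$, and $L_n/K$ is Galois.

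Next I would produce the isomorphism $\Gal(L_n/K)\cong M_n$ as in Remark \ref{s,t}: set $s=\tilde\sigma_a$, $t_0=\tilde\sigma_b$, $t_i=[s,t_{i-1}]$, and verify the presentation of $M_n$. The relation $s^p=1$ holds because on $L_n$ the automorphism $s$ corresponds to a unipotent Jordan block of size $n+1\le p$ (this is exactly where $n<p$ enters), hence has order $p$; the relations $t_i^p=1$ and $[t_i,t_j]=1$ are immediate, since each $t_i$ merely scales the single generator $A_i^{1/p}$ by $\zeta_p$. The essential relations $[s,t_{i-1}]=t_i$ for $i\le n$ and $[s,t_n]=1$ are read off from Lemma \ref{A_n}: taking the commutator of $s$ with $t_{i-1}$ and using $s(A_i^{1/p})=A_i^{1/p}\cdot(\text{$p$-th power})\cdot s(A_{i-1}^{1/p})^{-1}$ shows that conjugation by $s$ introduces exactly one extra factor of $\zeta_p$ on $A_i^{1/p}$, producing $t_i$, whereas $[s,t_n]=1$ because there is no $A_{n+1}$-layer and $t_n$ acts only on the top generator. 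Since these relations hold and $s$, $t_0$ and their iterated commutators generate $\Gal(L_n/K)$, von Dyck's theorem gives a surjection $M_n\twoheadrightarrow\Gal(L_n/K)$; together with the order count below this is an isomorphism. The associated proper defining system $\bar{\rho}_{n+2}\colon G_K\to\bar{U}_{n+3}$ is then the one attached to this Galois group via Lemma \ref{structure of image}, and restricting the last-column entries to $\ker\chi=G_{K(a^{1/p})}$ reads off the Kummer character of $A_i^{1/p}$, i.e. $Res_{\ker\chi}\psi_i=\chi_{A_i}$, generalizing Sharifi's case (Theorem \ref{sharifi explicit}) for $i=1$.

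The main obstacle is the degree estimate $[L_n:L_{n-1}]=p$, equivalently $A_n\notin L_{n-1}^{*p}$, which is what forces $\#\Gal(L_n/K)=p^{n+2}=\#M_n$ and upgrades the surjection to an isomorphism. I expect to argue by contradiction: if $A_n^{1/p}\in L_{n-1}$, I would test against the central element $t_{n-1}\in\Gal(L_{n-1}/K)$, which scales $A_{n-1}^{1/p}$ by $\zeta_p$ and fixes all lower generators. On the one hand $[s,t_{n-1}]=1$ in $M_{n-1}$, so $s$ and $t_{n-1}$ must commute on $L_{n-1}\ni A_n^{1/p}$. On the other hand the relation of Lemma \ref{A_n} expresses $s(A_n^{1/p})$ through $s(A_{n-1}^{1/p})$, and $t_{n-1}$ scales $s(A_{n-1}^{1/p})$ by $\zeta_p$ (it scales $A_{n-1}^{1/p}$ and fixes the lower data on which $s(A_{n-1}^{1/p})$ depends); tracing this through shows $s\,t_{n-1}$ and $t_{n-1}\,s$ act differently on $A_n^{1/p}$, a contradiction. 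Hence $A_n^{1/p}\notin L_{n-1}$, the degrees multiply to $p^{n+2}$, and the induction closes.
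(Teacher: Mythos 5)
Your overall route is the same as the paper's: verify Galois stability of each layer via Lemma \ref{A_n}, send $\tilde{\sigma}_a\mapsto s$, $\tilde{\sigma}_b\mapsto t_0$, define $t_i$ as iterated commutators, and check the presentation of $M_n$ (the paper carries this out explicitly for $M_1$ and $M_2$ and omits the general induction that you set up). Your degree-count argument is a genuine improvement in completeness: the paper's criterion in Remark \ref{s,t} requires knowing in advance that the group has the same size as $M_n$, and your contradiction via the central element $t_{n-1}$ is correct --- it amounts to observing that the relation of Lemma \ref{A_n} forces the commutator $[\tilde{\sigma}_a,\tilde{\sigma}_{A_{n-1}}]$ to multiply $A_n^{1/p}$ by $\zeta_p$ while restricting trivially to $L_{n-1}$ (since $[s,t_{n-1}]=1$ in $M_{n-1}$), so $A_n^{1/p}\notin L_{n-1}$ and $[L_n:L_{n-1}]=p$.

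There is, however, a genuine gap at the relation $s^p=1$. Your justification --- that ``$s$ corresponds to a unipotent Jordan block of size $n+1\le p$, hence has order $p$'' --- is circular: the matrix realization of $\tilde{\sigma}_a$ inside $U_{n+2}(\F_p)$ only exists once the homomorphism $\rho_{n+1}$ (equivalently, the isomorphism $\Gal(L_n/K)\cong M_n$) has been constructed, which is what you are proving. What is available a priori is only that conjugation by $\tilde{\sigma}_a$ acts unipotently on the Kummer group $\langle b,A_1,\dots,A_n\rangle\subset K(a^{1/p})^*/(K(a^{1/p})^*)^p$, i.e.\ on the abelian normal subgroup $\Gal(L_n/K(a^{1/p}))$; but a unipotent (even trivial) conjugation action puts no bound of $p$ on the order of a lift --- in the extension $\Z/p^2\Z\twoheadrightarrow\Z/p\Z$ the action is trivial and every lift of a generator has order $p^2$. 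A priori $\tilde{\sigma}_a^p$ is a possibly nontrivial element of $\Gal(L_n/K(a^{1/p},b^{1/p}))$, and killing it is precisely where the arithmetic input enters: the paper computes $\tilde{\sigma}_a^p(A_1^{1/p})=\zeta_p^{pj_1}A_1^{1/p}\,\Nrm(\beta)/b=A_1^{1/p}$ using $\Nrm_{K(a^{1/p})/K}(\beta)=b$, and at the next layer needs the telescoping identity $\prod_{i=0}^{p-1}\tilde{\sigma}_a^i(A_1^{1/p})=b^{(p-1)/2}$ before concluding $\tilde{\sigma}_a^p(A_2^{1/p})=A_2^{1/p}$; an analogous norm computation, resting on the identities of Lemma \ref{A_n} (and this is where $n<p$ really enters, through the exponents $\binom{p}{n}$), is required at every layer of your induction. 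With that step replaced by the explicit verification, your induction closes and coincides with the paper's argument.
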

\begin{proof}

   We have \[\sigma_a(A_1)/A_1=\beta^p/b\in K(a^{1/p},b^{1/p})^{*p}\] and \[ \sigma_b(A_1)/A_1=1\in K(a^{1/p},b^{1/p})^{*p}.\] Hence $K(a^{1/p},b^{1/p},A_1^{1/p})/K$ is a Galois extension.
   
    Lift $\sigma_a$ and $\sigma_b$ to $\Gal(K(a^{1/p},b^{1/p},A_1^{1/p})/K)$. We denote them as $\Tilde{\sigma}_a$ and $\Tilde{\sigma}_b$. 
   
   As said in remark \ref{s,t}, to prove $\Gal(K(a^{1/p},b^{1/p},A_1^{1/p})/K)\cong M_1$, we only need to determine what is mapped to $s,t_0$ and check that it satisfies the relations in the presentation of group $M_1$. Lift $\sigma_a$ and $\sigma_b$ to $\Gal(K(a^{1/p},b^{1/p},A_1^{1/p})/K)$. We denote them as $\Tilde{\sigma}_a$ and $\Tilde{\sigma}_b$. We would like to map $\Tilde{\sigma}_a$, $\Tilde{\sigma}_b$ to $s,t_0$ respectively. Next, we will check it satisfies the relations.

  By lemma \ref{A_n}, \begin{equation}\label{A_1}    \Tilde{\sigma}_a(A_1^{1/p})=\zeta_p^{j_1}A_1^{1/p}\beta/b^{1/p}
  \end{equation}
  for some $j_1\in \F_p$. Then 
  \[  \Tilde{\sigma}_a^2(A_1^{1/p})=\zeta_p^{j_1}A_1^{1/p}\beta/b^{1/p}\zeta_p^{j_1}\Tilde{\sigma}_a(\beta)/b^{1/p}
  \]
  \[\cdots\]
  \[ \Tilde{\sigma}_a^p(A_1^{1/p})=\zeta_p^{j_1 p}A_1^{1/p} \Nrm(\beta)/b=A_1^{1/p} \]
Hence $\Tilde{\sigma}_a^p=1$.
 
   We have $\Tilde{\sigma}_b(A_1^{1/p})=\zeta_p^{i_1}A_1^{1/p} $ for some $i_1\in \F_p$. Hence $\Tilde{\sigma}_b^p(A_1^{1/p})=A_1^{1/p}$. So $\Tilde{\sigma}_b^p=1$ .

We have 
\[\Tilde{\sigma}_a\Tilde{\sigma}_b(A_1^{1/p})=\Tilde{\sigma}_a(\zeta_p^iA_1^{1/p})=\zeta_p^{i_1+j_1}A_1^{1/p}\beta/b^{1/p}\]
   \[   \Tilde{\sigma}_b\Tilde{\sigma}_a(A_1^{1/p})=\Tilde{\sigma}_b(\zeta_p^{j_1}A_1^{1/p}\beta/b^{1/p})=\zeta_p^{j_1+i_1-1}A_1^{1/p}\beta/b^{1/p}
   \]
   \[ \Tilde{\sigma}_a\Tilde{\sigma}_b\Tilde{\sigma}_a^{-1}\Tilde{\sigma}_b^{-1}(A_1^{1/p})=\zeta_p A_1^{1/p}
   \]
Define $\Tilde{\sigma}_{A_1}=\Tilde{\sigma}_a\Tilde{\sigma}_b\Tilde{\sigma}_a^{-1}\Tilde{\sigma}_b^{-1}$. We have 
\[
\Tilde{\sigma}_{A_1}^p=1, \Tilde{\sigma}_a\Tilde{\sigma}_{A_1}\Tilde{\sigma}_a^{-1}\Tilde{\sigma}_{A_1}^{-1}=1,\Tilde{\sigma}_b\Tilde{\sigma}_{A_1}\Tilde{\sigma}_b^{-1}\Tilde{\sigma}_{A_1}^{-1}=1
\]
 Hence, $ \Tilde{\sigma}_{A_1}$  plays the role $t_1$ in the presentation of $M_1$ and we checked that the relations are satisfied. Therefore, we have $\Gal(K(a^{1/p},b^{1/p},A_1^{1/p})/K)\cong M_1$. And $G_K\rightarrow \Gal(K(a^{1/p},b^{1/p},A_1^{1/p})/K)\cong M_1\subset U_3(\F_p)$ gives us $ Res_{\ker\chi}\psi_1=\chi_{A_1}$.

 For next step: Similarly, we have \[\Tilde{\sigma}_{a}(A_2)/A_2=\beta^{p(p-1)/2}/\Tilde{\sigma}_{a}(A_1)=\beta^{p(p-1)/2}b/(A_1\beta^p)\in K(a^{1/p},b^{1/p},A_1^{1/p})^{*p}\] and \[ \Tilde{\sigma}_{b}(A_2)/A_2=1 \in K(a^{1/p},b^{1/p},A_1^{1/p})^{*p}.\] And we know from last paragraph, $\Gal(K(a^{1/p},b^{1/p},A_1^{1/p})^{*p})/K)$ is generated by $ \Tilde{\sigma}_{a},\Tilde{\sigma}_{b}$. Hence for any $\sigma\in \Gal(K(a^{1/p},b^{1/p},A_1^{1/p})^{*p}/K)$, we have $\sigma(A_2)/A_2\in K(a^{1/p},b^{1/p},A_1^{1/p})^{*p}$. Therefore, $K(a^{1/p},b^{1/p},A_1^{1/p},A_2^{1/p})/K$ is a Galois extension.

Lift $ \Tilde{\sigma}_{a},\Tilde{\sigma}_{b},\Tilde{\sigma}_{A_1}$ to $\Gal(K(a^{1/p},b^{1/p},A_1^{1/p},A_2^{1/p})/K)$. And we still denote the lifting as $ \Tilde{\sigma}_{a},\Tilde{\sigma}_{b},\Tilde{\sigma}_{A_1}$ by a little abusing notation. Similarly, we will check that $\Tilde{\sigma}_{a},\Tilde{\sigma}_{b} $ are mapped to $s, t_0\in M_2$ and they satisfy the relations in the definition of $M_2$.

By lemma \ref{A_n}
\[\Tilde{\sigma}_{a}(A_2^{1/p})=\zeta_p^{j_2}A_2^{1/p}b^{1/p}\beta^{\frac{p-3}{2}}/A_1^{1/p}\]
 for some $j_2\in \F_p$. By equation \eqref{A_1}, we have 
 \[
 \prod_{i=0}^{p-1}\Tilde{\sigma}_{a}^i(A_1^{1/p})=A_1\prod_{i=0}^{p-1}\Tilde{\sigma}_{a}^i(\beta^{p-1-i})/b^{\frac{p-1}{2}}=b^{\frac{p-1}{2}}
 \]
 \[
 \Tilde{\sigma}_{a}^p(A_2^{1/p})=A_2^{1/p}b \prod_{i=0}^{p-1}\Tilde{\sigma}_{a}^i(\beta^{\frac{p-3}{2}})/ \prod_{i=0}^{p-1}\Tilde{\sigma}_{a}^i(A_1^{1/p})=A_2^{1/p}
 \]

Hence $\Tilde{\sigma}_{a}^p=1$.

We have $\Tilde{\sigma}_{b}(A_2)=A_2$, so $\Tilde{\sigma}_{b}(A_2^{1/p})=\zeta_p^{i_2}A_2^{1/p}$ for some $i_2\in \F_p$. So $\Tilde{\sigma}_{b}^p=1$

We have 
\[
\Tilde{\sigma}_{a}^{-1}(A_2^{1/p})=\zeta_p^{-j_2-j_1}A_2^{1/p}A_1^{1/p}/\Tilde{\sigma}_{a}^{-1}(\beta^{\frac{p-1}{2}})
\]
\[
\Tilde{\sigma}_{b}(A_2^{1/p})=\zeta_p^{-i_2}A_2^{1/p}
\]

\[
\Tilde{\sigma}_{A_1}(A_2^{1/p})=\Tilde{\sigma}_{a}\Tilde{\sigma}_{b}\Tilde{\sigma}_{a}^{-1}\Tilde{\sigma}_{b}^{-1}(A_2^{1/p})=\zeta_p^{i_1}A_2^{1/p}
\]
Hence $\Tilde{\sigma}_{A_1}^p=1$.
\[
\Tilde{\sigma}_{a}\Tilde{\sigma}_{A_1}\Tilde{\sigma}_{a}^{-1}\Tilde{\sigma}_{A_1}^{-1}(A_2^{1/p})=\zeta_pA_2^{1/p}
\]
Define $\Tilde{\sigma}_{A_2}=\Tilde{\sigma}_{a}\Tilde{\sigma}_{A_1}\Tilde{\sigma}_{a}^{-1}\Tilde{\sigma}_{A_1}^{-1}$. Then $\Tilde{\sigma}_{A_2}^p=1$. And one can check 
\[
\Tilde{\sigma}_{a}\Tilde{\sigma}_{A_2}\Tilde{\sigma}_{a}^{-1}\Tilde{\sigma}_{A_2}^{-1}=1,\Tilde{\sigma}_{b}\Tilde{\sigma}_{A_2}\Tilde{\sigma}_{b}^{-1}\Tilde{\sigma}_{A_2}^{-1}=1,\Tilde{\sigma}_{A_1}\Tilde{\sigma}_{A_2}\Tilde{\sigma}_{A_1}^{-1}\Tilde{\sigma}_{A_2}^{-1}=1
\]
These imply that $\Tilde{\sigma}_{a},\Tilde{\sigma}_{b} $ satisfies the relations in the definition of $M_2$. And $ \Tilde{\sigma}_{A_1}, \Tilde{\sigma}_{A_2}$ plays the role as $ t_1, t_2$. Hence we have an isomorphism $ \Gal(K(a^{1/p},b^{1/p},A_1^{1/p},A_2^{1/p})/K)\cong M_2$. It determines a proper defining system $ G_K\rightarrow \Gal(K(a^{1/p},b^{1/p},A_1^{1/p},A_2^{1/p})/K)\cong M_2\subset U_4(\F_p)\subset\Bar{U}_{5}(\F_p)$. And $Res_{\ker\chi}\psi_2=\chi_{A_2}$.

The general case can be checked by the same process and the calculation is tedious. We omit here.

\end{proof}

\begin{remark}
    The proof for the case $M_1$ is the same as the proof of theorem \ref{sharifi explicit} in \cite{MR3614934} and \cite{MR2716836}. We just imitate the proof and get the generalized result. But the calculation becomes more and more tedious. 
\end{remark}
 Now, if we have $\chi\cup\psi_0=0$, we can construct a Galois field extension and it corresponds to a proper defining system. How do we get all proper defining systems? We need the following definition and lemma.
 \begin{definition}
     The proper defining system we get by the way in lemma \ref{standard} is called the standard proper defining system.
 \end{definition}
 By previous lemmas, fix $\chi$ and $\psi_0$, the standard proper defining system depends on the choice of $a,b,\beta$ and choices of the lifting of $\sigma_a$ and $\sigma_b$.   
\begin{lemma}\label{str of proper}
    All proper defining systems with respect to $\chi$ can be obtained from the standard proper defining system by operations in lemma \ref{blocklemma}  and remark \ref{remark operation}.
\end{lemma}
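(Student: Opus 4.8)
The plan is to reformulate a proper defining system entirely in terms of its last column and then recognize the operations of Lemma \ref{blocklemma} and Remark \ref{remark operation} as realizing the full torsor of admissible last columns. Writing a proper defining system $\bar\rho$ for $\chi^{(n)},\psi_0$ as a homomorphism $G_K\to\bar U_{n+2}(\F_p)$, the homomorphism condition forces every entry off the last column to be the prescribed binomial $\binom{\chi}{j-i}$ (these are automatically compatible, since $d\binom{\chi}{k}=-\sum_{i=1}^{k-1}\binom{\chi}{i}\cup\binom{\chi}{k-i}$), while the last-column cochains $\psi_0,\psi_1,\dots,\psi_{n-1}$ are constrained by
\[
d\psi_j=-\sum_{k=1}^{j}\binom{\chi}{k}\cup\psi_{j-k},\qquad 0\le j\le n-1,
\]
with $d\psi_0=0$. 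Hence, once $\chi$ and $\psi_0$ are fixed, the collection of proper defining systems is an affine space: any two differ by a tuple $(\delta_0,\delta_1,\dots,\delta_{n-1})$ with $\delta_0=0$ satisfying these same relations.

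First I would record the key structural observation about the difference tuple. Because $\delta_0=0$, reindexing $\epsilon_i\coloneqq\delta_{i+1}$ gives $d\epsilon_0=d\delta_1=0$ and $d\epsilon_i=-\sum_{k=1}^{i}\binom{\chi}{k}\cup\epsilon_{i-k}$, which is exactly the defining relation for a proper defining system of $\chi^{(n-1)},\delta_1$, shifted one slot up the last column. Moreover $\chi\cup\delta_1=-d\delta_2$ is a coboundary, so $\chi\cup\delta_1=0$ and, by Lemma \ref{standard}, such a smaller proper defining system genuinely exists to the required length. The content of Lemma \ref{blocklemma} and Remark \ref{remark operation} is precisely that adding this shifted smaller system into the last column of $\bar\rho$ again produces a proper defining system with the \emph{same} bottom class $\psi_0$; thus the additive operations realize exactly the affine-space structure just identified.

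I would then prove the lemma by induction on $n$. The base case $n=1$ is immediate: in $\bar U_3$ the only free datum is $\psi_0$, the top entry is central and hence killed, so the proper defining system is unique and standard. For the inductive step, let $\bar\rho$ be an arbitrary proper defining system for $\chi^{(n)},\psi_0$ and let $\bar\rho^{\mathrm{std}}$ be a standard one furnished by Lemma \ref{standard}. The difference tuple is, by the observation above, a proper defining system for $\chi^{(n-1)},\delta_1$ with $\chi\cup\delta_1=0$; by the induction hypothesis it is obtained from a standard proper defining system by the operations of Lemma \ref{blocklemma} and Remark \ref{remark operation}. Adding it back to $\bar\rho^{\mathrm{std}}$ via the very same operation recovers $\bar\rho$, which closes the induction.

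The main obstacle I anticipate is bookkeeping rather than conceptual. One must check that the ``shift up by one slot'' of the difference tuple matches the indexing of the induced system in Remark \ref{remark operation} exactly, and that the identities $d\psi_j=-\sum_{k=1}^{j}\binom{\chi}{k}\cup\psi_{j-k}$ hold at the level of cochains and not merely in cohomology, so that Lemma \ref{blocklemma} really yields a homomorphism to $\bar U_{m+n}$. A secondary point to handle carefully is that the induction invokes standard proper defining systems with varying bottom class $\delta_1$, so one should confirm that the construction of Lemma \ref{standard} applies to each such $\delta_1$, which it does precisely because $\chi\cup\delta_1=0$.
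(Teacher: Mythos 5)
Your proposal is correct and follows essentially the same route as the paper's own proof: subtract a standard proper defining system furnished by Lemma \ref{standard} with the same $\psi_0$, observe that the shifted difference tuple is itself a (shorter) proper defining system whose bottom class $\delta_1$ satisfies $\chi\cup\delta_1=0$, and close the induction using the additive operations of Lemma \ref{blocklemma} and Remark \ref{remark operation}. Your writeup just makes explicit the cochain relations $d\psi_j=-\sum_{k=1}^{j}\binom{\chi}{k}\cup\psi_{j-k}$ and the affine-space structure that the paper leaves implicit.
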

\begin{proof}
    Assume we have a proper defining system $\Bar
{\rho}_n:G_K\rightarrow\Bar{U}_{n+1}(\F_p),n\leq p$. 
\[\Bar{\rho}_n=\begin{bmatrix}  
    1 & \chi &\binom{\chi}{2}&\binom{\chi}{3}&\binom{\chi}{4}&\cdots & *\\ 
    0 &1& \chi &\binom{\chi}{2}&\binom{\chi}{3}&\cdots & \psi_{n-2}\\
    \vdots& \vdots& \vdots& \vdots& \vdots& \ddots& \vdots\\
   0&0&0& 1&\chi&\binom{\chi}{2}&\psi_2\\
   0&0&0&0&1&\chi&\psi_1\\
   0&0&0&0&0&1&\psi_0\\
   0&0&0&0&0&0&1
    \end{bmatrix}
\]
Then $\chi\cup\psi_0=0 $. By lemma \ref{standard}, we can get a standard proper defining system:

\[\Bar{\rho}'_n=\begin{bmatrix}  
    1 & \chi &\binom{\chi}{2}&\binom{\chi}{3}&\binom{\chi}{4}&\cdots & *\\ 
    0 &1& \chi &\binom{\chi}{2}&\binom{\chi}{3}&\cdots & \psi_{n-2}'\\
    \vdots& \vdots& \vdots& \vdots& \vdots& \ddots& \vdots\\
   0&0&0& 1&\chi&\binom{\chi}{2}&\psi_2'\\
   0&0&0&0&1&\chi&\psi_1'\\
   0&0&0&0&0&1&\psi_0'\\
   0&0&0&0&0&0&1
    \end{bmatrix}
\]
where $\psi_0=\psi_0'$. Assume $ \psi_i=\psi_i'$ for $ 0\leq i\leq m$. Then by lemma \ref{blocklemma}
\[
\begin{bmatrix}  
    1 & \chi &\binom{\chi}{2}&\binom{\chi}{3}& 
    \binom{\chi}{4}& \cdots &\cdots & *\\ 
    0 &1& \chi &\binom{\chi}{2}&\binom{\chi}{3}& \cdots
    &\cdots & \psi_{n-2}'-\psi_{n-2}\\
    \ddots& \ddots& \ddots& \ddots& \ddots& \vdots &\vdots& \vdots\\
    \cdots & 0& 1& \chi& \binom{\chi}{2}& \ldots& \binom{\chi}{m+1}& \psi_{m+1}'-\psi_{m+1}\\
    0& \cdots& 0 & 1& \chi& \cdots& \binom{\chi}{m}& 0\\
    \vdots& \vdots& \vdots& \vdots& \vdots& \vdots &\vdots& \vdots\\
   0&0&0&0& 1&\chi&\binom{\chi}{2}&0\\
   0&0&0&0&0&1&\chi&0\\
   0&0&0&0&0&0&1&0\\
   0&0&0&0&0&0&0&1
    \end{bmatrix}
\] 
 is a proper defining system that is induced by the following defining system:
\[\begin{bmatrix}  
    1 & \chi &\binom{\chi}{2}&\binom{\chi}{3}&\binom{\chi}{4}&\cdots & *\\ 
    0 &1& \chi &\binom{\chi}{2}&\binom{\chi}{3}&\cdots & \psi_{n-2}'-\psi_{n-2}\\
    \vdots& \vdots& \vdots& \vdots& \vdots& \ddots& \vdots\\
   0&0&0& 1&\chi&\binom{\chi}{2}&\psi_{m+3}'-\psi_{m+3}\\
   0&0&0&0&1&\chi&\psi_{m+2}'-\psi_{m+2}\\
   0&0&0&0&0&1&\psi_{m+1}'-\psi_{m+1}\\
   0&0&0&0&0&0&1
    \end{bmatrix}
 \]
Then the lemma is followed by induction.

\end{proof}
In the lemma \ref{standard}, we can construct a standard proper defining system from a field extension. Conversely, we can get a field extension from a proper defining system.
\begin{lemma}
   Given a proper defining system $ \Bar{\rho}_n:G_K\rightarrow \Bar{U}_{n+1}$, the fixed subfield by $\ker\Bar{\rho}_n$ can be writen in the form  
   
 \[  K 
 \left( 
 \begin{array}{cc}
  a^{1/p},b_0^{1/p},(A_{0,1}b_1)^{1/p},(A_{0,2}A_{1,1}b_2)^{1/p},(A_{0,3}A_{1,2}A_{2,1}b_3)^{1/p},\\ \cdots, (A_{0,n-2}A_{1,n-3}\cdots A_{n-2,1}b_{n-2})^{1/p}
\end{array}  
\right)
\]
           where $a,b_0,b_1,\cdots,b_{n-2}\in K$ and there exist $\beta_i\in K(a^{1/p})$ satisfying that $\Nrm_{K(a^{1/p})/K}(\beta_i)= b_i$ for $0\leq i\leq n-3$ and $ A
_{i,j}:=\prod_{k=0}^{p-1}\sigma_a^k(\beta_i^{\binom{k}{j}})$
   
\end{lemma}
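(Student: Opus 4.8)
The plan is to read off the fixed field from the decomposition already established, rather than to analyze $\bar\rho_n$ from scratch. Write $\chi=\chi_a$, so that $K(a^{1/p})$ is the fixed field of $\ker\chi$, and let $L:=(K^{sep})^{\ker\bar\rho_n}$, so that $\Gal(L/K)\cong\Ima\bar\rho_n$. By Lemma \ref{structure of image} and the remark following it, the subgroup of $\Ima\bar\rho_n$ fixing $K(a^{1/p})$ is the elementary abelian group spanned by the last-column coordinates; hence $K(a^{1/p})\subseteq L$ and $L/K(a^{1/p})$ is abelian of exponent dividing $p$. Restriction to $\ker\chi=G_{K(a^{1/p})}$ turns each last-column cochain $\psi_m$ into a genuine homomorphism $\mathrm{Res}_{\ker\chi}\psi_m\in\Hom(G_{K(a^{1/p})},\mu_p)$, and (since $\mu_p\subseteq K(a^{1/p})$ here) Kummer theory attaches to these classes $c_m\in K(a^{1/p})^*/(K(a^{1/p})^*)^p$ with $L=K(a^{1/p},c_0^{1/p},\dots,c_{n-2}^{1/p})$. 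The whole problem is therefore to identify the radicands $c_m$ explicitly.

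To do this I would run the induction of Lemma \ref{str of proper}. Its proof exhibits $\bar\rho_n$ as an iterated sum, in the sense of Lemma \ref{blocklemma} and Remark \ref{remark operation}, of standard proper defining systems: at the $i$-th stage the vanishing cup product produces $\beta_i\in K(a^{1/p})$ with $\Nrm_{K(a^{1/p})/K}(\beta_i)=b_i\in K^*$, and the standard system built from $\beta_i$ via Lemma \ref{standard} is inserted at shift $i$, i.e.\ its $\psi_0$-entry occupies the $\psi_i$-slot. By Lemma \ref{A_n} and Lemma \ref{standard}, the standard system attached to $\beta_i$ contributes to the $\psi_{i+j}$-slot the restricted character $\chi_{A_{i,j}}$, where $A_{i,j}=\prod_{k=0}^{p-1}\sigma_a^k(\beta_i^{\binom{k}{j}})$ and $A_{i,0}=\Nrm_{K(a^{1/p})/K}(\beta_i)=b_i$.

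Finally I would assemble the radicands. Because the Kummer isomorphism $\Hom(G_{K(a^{1/p})},\mu_p)\cong K(a^{1/p})^*/(K(a^{1/p})^*)^p$ carries addition of characters to multiplication of radicands, while the operation of Lemma \ref{blocklemma} adds last columns entrywise, summing the shifted standard systems yields in the $\psi_m$-slot the class $c_m=\prod_{i+j=m}A_{i,j}=A_{0,m}A_{1,m-1}\cdots A_{m-1,1}\,b_m$, with $c_0=b_0$. Listing $b_0^{1/p}$ separately is harmless since $b_0\in K^*$. This is precisely the asserted generating set, and the existence of the $\beta_i$ with $\Nrm_{K(a^{1/p})/K}(\beta_i)=b_i$ is exactly the stated side condition.

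The main obstacle is the shift-and-sum bookkeeping: one must verify that the inductive matching of Lemma \ref{str of proper} places the contribution $A_{i,j}$ of $\beta_i$ exactly in the $\psi_{i+j}$-slot with no cross terms, and that passing to $\mathrm{Res}_{\ker\chi}$ is simultaneously compatible with the additive structure on cochains and with Kummer duality, so that the radicands multiply slot by slot. Unwinding this uses the explicit relations of Lemma \ref{A_n} together with the cocycle identity $\rho(g_1g_2)=\rho(g_1)\,g_1\rho(g_2)$ for the entries; it is routine but notation-heavy, which is presumably why the statement records only the shape of the answer.
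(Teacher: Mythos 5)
Your proposal is correct and follows essentially the same route as the paper: the paper's proof consists precisely of invoking Lemma \ref{str of proper} (every proper defining system is an iterated sum of shifted standard systems) together with the correspondence between standard proper defining systems and $M_n$-extensions from Lemma \ref{standard}, which is exactly your shift-and-sum identification of the radicands $c_m=A_{0,m}A_{1,m-1}\cdots A_{m-1,1}b_m$ via restriction to $\ker\chi$ and Kummer theory. The paper states this in one line and omits the bookkeeping; your write-up merely makes that same argument explicit (and the residual sign/inverse ambiguities you should note are harmless, being absorbable by replacing $\beta_i$ with $\beta_i^{-1}$).
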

 \begin{proof}
     The result directly follows from lemma \ref{str of proper} by using the correspondence between the standard proper defining system and $M_n$-field extension.

     Another method to check the lemma is by using the same method in the proof of lemma \ref{standard}. Check that the field extension is Galois extension and $\Tilde{\sigma}_a$ and $\Tilde{\sigma}_{b_0}$ are generators of $M_n$ and satisfy the relations. The calculation will become tedious, we omit it here. 
 \end{proof}

 Now, back to our case that $K$ is a number field. Instead consider the group $G_K=\Gal(K^{sep}/K)$, we consider the group $G_{K,S}=\Gal(K^S/K)$. For a defining system $\Bar{\rho}_n: G_{K,S}\rightarrow \Bar{U}_{n+1}$, the subfield $L$ fixed by $\ker\Bar{\rho}_n$ is a field extension that is unramified outside $S$. Conversely, if we have a field extension $L/K$ that is unramified outside $S$ and $\Gal(L/K)$ is isomorphic to a subgroup of $\Bar{U}_{n+1} $, then we have a defining system $\Bar{\rho}_n: G_{K,S} \rightarrow \Gal(L/K)\subset \Bar{U}_{n+1}$. We need the following known fact.
\begin{lemma}\label{unramified extension}
  Let $K$ be a number field containing $p$-th root of the unit. Let $a\in K^*$ and $\p$ be a prime that does not divide $p$, then $K(a^{1/p})/K$ is unramified at $\p$ if and only if the valuation $v_{\p}(a)\equiv0\mod{p}$ 
\end{lemma}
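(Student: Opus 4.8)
The plan is to reduce the statement to a purely local computation at $\p$, since whether $K(a^{1/p})/K$ is ramified at $\p$ can be checked after completing at $\p$. Write $v=v_{\p}$, let $\ell$ be the rational prime below $\p$, and note that the hypothesis $\p\nmid p$ forces $\ell\neq p$, so $p$ is invertible in the residue field $\OO_K/\p$. Set $L=K(a^{1/p})$; since $\zeta_p\in K$ this is either $K$ itself (when $a\in(K^*)^p$) or a cyclic degree-$p$ Kummer extension. I would treat the two implications separately.

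For the implication ``$\p$ unramified $\Rightarrow v(a)\equiv 0 \pmod p$'', I would argue by valuations. Let $w$ be the normalized ($\Z$-valued) valuation on $L$ above $\p$, with ramification index $e$, so that $w|_{K^*}=e\,v$. From $a=(a^{1/p})^p$ one gets $p\,w(a^{1/p})=w(a)=e\,v(a)$. If $\p$ is unramified in $L$ then $e=1$, whence $v(a)=p\,w(a^{1/p})\equiv 0\pmod p$; when $L=K$ the same identity holds with $e=1$.

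For the converse ``$v(a)\equiv 0 \pmod p \Rightarrow \p$ unramified'', I would first normalize $a$. Choosing $\pi\in K$ with $v(\pi)=1$ and writing $v(a)=pm$, the element $a/\pi^{pm}$ is a $\p$-unit and differs from $a$ by the $p$-th power $(\pi^m)^p$, so $K(a^{1/p})=K\bigl((a/\pi^{pm})^{1/p}\bigr)$ and we may assume $a$ is a unit at $\p$. The key point is then that adjoining a $p$-th root of a $\p$-unit gives an extension unramified at $\p$: the discriminant of $x^p-a$ equals $\pm p^{p}a^{p-1}$, which is a $\p$-unit because both $p$ and $a$ are units at $\p$; hence $\OO_{K_{\p}}[a^{1/p}]$ is étale over $\OO_{K_{\p}}$ and $\p$ is unramified in $L$. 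Equivalently, the reduction $x^p-\bar a$ is separable over the residue field since $p\neq\ell$, and Hensel's lemma lifts a factorization, giving an unramified extension.

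The only real content is this last tameness step, and it is exactly where the hypothesis $\p\nmid p$ enters: for $\p\mid p$ one would encounter wild ramification and the discriminant $\pm p^{p}a^{p-1}$ would fail to be a unit. The remaining manipulations --- passing to the complete local field and clearing the valuation by a $p$-th power --- are routine, so I expect no obstacle beyond carefully invoking the separability (equivalently, unit-discriminant) computation.
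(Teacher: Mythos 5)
Your proof is correct, but there is nothing in the paper to compare it against: the paper states this lemma as a ``known fact'' (Lemma \ref{unramified extension}) and supplies no proof at all, so your argument fills a genuine omission. What you give is the standard two-sided argument, and both halves check out. For the forward direction, fixing a prime $w$ of $L=K(a^{1/p})$ above $\p$ and writing $w|_{K^*}=e\,v_\p$, the identity $e\,v_\p(a)=p\,w(a^{1/p})$ with $e=1$ does force $p\mid v_\p(a)$, and you correctly cover the degenerate case $L=K$. For the converse, the normalization step is fine (take $\pi\in\p\setminus\p^2$, so $v_\p(\pi)=1$; its behavior at other primes is irrelevant since the question is local at $\p$), and the tameness step is the real content: the discriminant of $x^p-a$ is indeed $\pm p^p a^{p-1}$, a $\p$-unit once $a$ is a $\p$-unit and $\p\nmid p$, so $\OO_{K_\p}[a^{1/p}]$ is \'etale over $\OO_{K_\p}$ and every prime above $\p$ is unramified; your alternative phrasing via separability of $x^p-\bar a$ over the residue field plus Hensel is equally valid, since $\gcd(x^p-\bar a,\,px^{p-1})=1$ when $p$ is invertible and $\bar a\neq 0$. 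You also correctly identify where $\p\nmid p$ enters: for $\p\mid p$ the converse genuinely fails (wild ramification), while your forward valuation argument would still go through, which matches the asymmetry of the statement. No gaps.
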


Now, we apply all we have to the case \ref{case2} where $K$ is an imaginary quadratic field and $p$ splits in $K$ and the size of the class group $h_K$ is prime to $p$. Notations as the beginning of case \ref{case2} and the beginning of this subsection \ref{numerical criterion1}, recall that the character $\chi$ is $G_{K,S}\rightarrow \Gal(K_1/K)\cong\F_p$ where $K_1=K\Q_1$. 

\begin{lemma}\label{p-adic log}
    Let $\psi_0\in H^1(G_{K,S},\mu_p)\cong K^*\cap K_S^{*p}/K^{*p}$ correspond $\alpha\in K^*\cap K_S^{*p}/K^{*p} $. Then $\chi\cup\psi_0=0$ if and only if that $\log_p(\alpha)\cong 0 \mod{p^2}$ where $\log_p$ is the $p$-adic log.
\end{lemma}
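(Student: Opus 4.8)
The plan is to convert the global condition $\chi\cup\psi_0=0$ into a purely local condition at $\p_0$, and then to identify that local condition with the stated congruence on $\log_p(\alpha)$ via local class field theory.

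First I would reduce to a local statement. Since $p\nmid h_K$ we have $\Cl_S(K)/p=0$, so the exact sequence \eqref{2} gives an isomorphism $H^2(G_{K,S},\mu_p)\xrightarrow{\sim} Br(\OO_K[1/p])[p]$. By the fundamental exact sequence of global class field theory, $Br(\OO_K[1/p])$ injects into $\bigoplus_{v\mid p}Br(K_v)$ with image the kernel of the sum-of-invariants map; as $K$ is imaginary quadratic the only remaining place is complex and contributes nothing. Because $p$ splits, $S=\{\p_0,\tilde{\p}_0\}$ has exactly two elements, and the two local invariants of $\chi\cup\alpha$ sum to zero. Hence $\chi\cup\alpha=0$ if and only if $\mathrm{inv}_{\p_0}(\chi\cup\alpha)=0$, so the entire problem is detected at the single prime $\p_0$.

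Next I would interpret the local invariant. The restriction $\chi_{\p_0}$ of $\chi$ to $G_{K_{\p_0}}$ cuts out the local degree-$p$ cyclic extension $K_{1,\p_0}/K_{\p_0}$. Since $p$ splits, $K_{\p_0}\cong\Q_p$ and $K_{1,\p_0}$ is the degree-$p$ layer $\Q_{1,p}$ of the cyclotomic tower. By the standard local formula for the norm-residue symbol, $\mathrm{inv}_{\p_0}(\chi_{\p_0}\cup\alpha)=0$ if and only if $\alpha$ is a local norm, $\alpha\in \Nrm_{\Q_{1,p}/\Q_p}(\Q_{1,p}^*)$, where $\alpha$ is viewed in $K_{\p_0}=\Q_p$ through the embedding determined by $\p_0$. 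I would then make this norm condition explicit: the totally ramified degree-$p$ extension $\Q_{1,p}/\Q_p$ is the subfield of $\Q_p(\mu_{p^2})$ fixed by $\mu_{p-1}$, and its norm group is $\langle p\rangle\times\mu_{p-1}\times(1+p^2\Z_p)$, of index $p$ in $\Q_p^*$. Writing $\alpha=p^{v}\omega u$ with $\omega\in\mu_{p-1}$ and $u\in 1+p\Z_p$, membership in this norm group is equivalent to $u\in 1+p^2\Z_p$. Since $\log_p(p)=\log_p(\omega)=0$ for the Iwasawa logarithm, $\log_p(\alpha)=\log_p(u)$, and for $u=1+pt$ one has $\log_p(u)\equiv pt\pmod{p^2}$ because the higher terms $p^kt^k/k$ of the logarithmic series lie in $p^2\Z_p$ for $p$ odd. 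Therefore $u\in 1+p^2\Z_p$ if and only if $\log_p(\alpha)\equiv 0\pmod{p^2}$, which chains together with the previous reductions to give the claimed equivalence.

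The main obstacle I expect is the middle identification: verifying that $\chi_{\p_0}$ really cuts out $\Q_{1,p}$ and computing its norm group with the correct normalization, together with the clean justification that global vanishing is detected at $\p_0$ alone. Once the norm group $\langle p\rangle\times\mu_{p-1}\times(1+p^2\Z_p)$ is in hand, the passage to $\log_p\pmod{p^2}$ is an elementary computation with the logarithmic series, and the archimedean and prime-to-$p$ contributions vanish automatically from the $S$-ramified setup.
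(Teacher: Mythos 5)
Your proposal is correct, and the local half (identifying $K_{\p_0}\cong\Q_p$, $K_{1,\p_1}=\Q_{1,p}$ as the fixed field of $\mu_{p-1}$ in $\Q_p(\mu_{p^2})$, computing the norm group as $\langle p\rangle\times\mu_{p-1}\times(1+p^2\Z_p)$, and translating membership into $\log_p(\alpha)\equiv 0\bmod p^2$ via the series expansion) coincides with the paper's computation. Where you genuinely diverge is the global-to-local reduction. The paper invokes Poitou--Tate duality: since $p\nmid h_K$ gives $\Sh^1(G_{K,S},\F_p)\cong\Hom(\Cl_S(K),\F_p)=0$, duality yields $\Sh^2(G_{K,S},\mu_p)=0$, hence injectivity of $H^2(G_{K,S},\mu_p)\rightarrow H^2(G_{K_{\p_0}},\mu_p)\oplus H^2(G_{K_{\tilde{\p}_0}},\mu_p)$, and it then verifies the norm condition at \emph{both} completions. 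You instead use $\Cl_S(K)/p=0$ in the Kummer sequence \eqref{2} to identify $H^2(G_{K,S},\mu_p)\cong Br(\OO_K[1/p])[p]$ and apply the classical fundamental exact sequence for $Br(K)$: since the invariants over $S$ sum to zero and the complex place contributes nothing, vanishing at $\p_0$ alone already forces global vanishing. Both routes consume the hypothesis $p\nmid h_K$ in the same way, but your reciprocity argument is more elementary (no duality needed) and buys a reduction to a single prime; as a bonus it cleanly explains the paper's unproven remark that the criterion $\log_p(\alpha)\equiv 0\bmod p^2$ does not depend on the embedding $K\hookrightarrow \Q_p^{sep}$, since the two local invariants determine each other. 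The paper's duality formulation, on the other hand, is the one reused later (e.g., in the lemma showing Massey products vanish over $G_{K,S}$ if and only if they vanish over $G_K$), so it serves as shared infrastructure that your Brauer-group shortcut would not directly provide.
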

\begin{proof}
 Let $p\OO_K=\p_0\Tilde{\p}_0$ and $p\OO_{K_1}=\p_1\Tilde{\p}_1$. Let $K_{\p_0}$ and $K_{\Tilde{\p}_0}$ be the completion of $K$ at prime $ \p_0$ and $\Tilde{\p}_0$ respectively.  Similarly for the definition $ K_{1,\p_1}$ and $K_{1,\Tilde{\p}_1}$. We will use Poitou-Tate Duality which is the theorem 8.6.7 in \cite{MR2392026}. We use the same notation as in chapter 8.6 of \cite{MR2392026}. Take $A=\mu_p$, then $A'=\Hom(\mu_p,\OO_S^*)\cong \F_p$ as $G_{K,S}$ module. Lemma 8.6.3 in \cite{MR2392026} tells us $\Sh^1(G_{K,S},\F_p)\cong \Hom(\Cl_S(K),\F_p)=0 $. By theorem 8.6.7 in \cite{MR2392026}, We have $\Sh^2(G_{K,S},\mu_p)\cong \Sh^1(G_{K,S},\F_p)^{\vee}=0$. This implies that the map $ H^2(G_{K,S},\mu_p)\rightarrow H^2(G_{K_{\p_0}},\mu_p)\oplus H^2(G_{K_{\Tilde{\p}_0}},\mu_p)$ is injective. The cup product $ \chi\cup\psi_0$ vanishes in $ H^2(G_{K,S},\mu_p)$ if and only if $Res\chi\cup Res\psi_0$ vanishes both in $H^2(G_{K_{\p_0}},\mu_p)$ and $ H^2(G_{K_{\Tilde{\p}_0}},\mu_p)$. In local field $K_{\p_0}$, $\chi\cup\psi_0 $ vanishes if and only if $\alpha\in \Nrm_{ K_{1,\p_1}/K_{\p_0}}( K_{1,\p_1}^*)$. One can check that $ K_{\p_0}=\Q_p$ and $K_{1,\p_1}=\Q_{1,p}$ which is the completion of $\Q_1$ at $p$. And $\Q_{1,p}/\Q_p$ is totally ramified degree $p$ extension. We can decompose $ \Q_p^*=p^{\Z}\oplus \F_p^*\oplus (1+p\Z
_p)$ since $p$ is odd and $\Z_p^*\cong \F_p^*\oplus (1+p\Z_p)\cong \F_p^*\oplus\Z_p $ is given by $ t\rightarrow (t\mod{p},\log_p(t)/\log_p(1+p))$. By local class field theory, we have $ \Nrm_{ K_{1,\p_1}/K_{\p_0}}( K_{1,\p_1}^*)\cong \Z\oplus \F_p^*\oplus (1+p^2\Z
_p)$ as a subgroup of $\Q_p^*$.  We have $\alpha \in \Nrm_{ K_{1,\p_1}/K_{\p_0}}( K_{1,\p_1}^*)$ if and only if $\log_p(\alpha)=0\mod{p^2}$. And similar story happened when we complete at $\Tilde{\p}_0$. Hence $\log_p(\alpha)=0\mod{p^2}$ if and only if $\chi\cup\psi_0=0$ 

 \end{proof}
 \begin{remark}
    The value $\log_p\alpha$ depends on the embedding $K\rightarrow \Q_p^{sep}$. But the criterion  $\log_p(\alpha)\equiv0 \bmod{p^2}$ does not depend on the embedding.
 \end{remark}

Here is the relation between our result and the classical result Gold criterion \cite{Gold}
\begin{theorem}
    Let $K$ be a imaginary quadratic field, $p\nmid h_K$, $p$ split in $K$ i.e. $\p\OO_K=\p_0\Tilde{\p_0}$. Take $(\alpha)=\p_0^{h_K}$, then $\lambda \geq 2\Leftrightarrow$ the cup product $\chi\cup \alpha=0\Leftrightarrow \log_p(\alpha)\equiv0 \bmod{p^2}\Leftrightarrow \alpha^{p-1}\equiv 1\mod{\Tilde{\p}_0^2}$ where $\log_p$ is $p$-adic log.
\end{theorem}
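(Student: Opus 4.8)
The plan is to read the statement as a chain of three equivalences and to notice that the first two links are already in hand, so that only the last one requires genuine work. First I would record that $\lambda\geq 1$ always holds in this setting (as noted in the remark after Theorem \ref{main1}), so the case $n=2$ of Theorem \ref{main1}---in which the $2$-fold Massey product $(\chi,\alpha)$ is literally the cup product $\chi\cup\alpha$---yields $\lambda\geq 2\Leftrightarrow \chi\cup\alpha=0$. The equivalence $\chi\cup\alpha=0\Leftrightarrow \log_p(\alpha)\equiv 0\pmod{p^2}$ is exactly Lemma \ref{p-adic log} applied with $\psi_0$ the class of $\alpha$. Thus the only new assertion is the purely local equivalence
\[
\log_p(\alpha)\equiv 0 \pmod{p^2}\iff \alpha^{p-1}\equiv 1 \pmod{\tilde{\p}_0^2}.
\]

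To prove this I would pass to the completion $K_{\tilde{\p}_0}\cong\Q_p$. Since $(\alpha)=\p_0^{h_K}$ we have $v_{\tilde{\p}_0}(\alpha)=0$, so under the embedding $\iota\colon K\hookrightarrow K_{\tilde{\p}_0}\cong\Q_p$ the element $u:=\iota(\alpha)$ lies in $\Z_p^{*}$; by the embedding-independence of the criterion noted after Lemma \ref{p-adic log}, I may compute $\log_p(\alpha)=\log_p(u)$ at this prime, where $\alpha$ is a unit. Writing $u=\omega(u)\langle u\rangle$ with $\omega(u)\in\mu_{p-1}$ the Teichm\"uller representative and $\langle u\rangle=1+pt\in 1+p\Z_p$, I have $\log_p(u)=\log_p(\langle u\rangle)$ because $\log_p$ annihilates roots of unity. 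For odd $p$ the expansion $\log_p(1+pt)=pt-\tfrac{(pt)^2}{2}+\cdots$ gives $\log_p(u)\equiv pt\pmod{p^2}$, while $u^{p-1}=\langle u\rangle^{p-1}=(1+pt)^{p-1}\equiv 1-pt\pmod{p^2}$ since $\omega(u)^{p-1}=1$ and $\binom{p-1}{2}(pt)^2\equiv 0$. Hence both $\log_p(u)\equiv 0\pmod{p^2}$ and $u^{p-1}\equiv 1\pmod{p^2}$ are equivalent to $t\equiv 0\pmod p$, i.e.\ to $\langle u\rangle\equiv 1\pmod{p^2}$, and therefore to each other. Finally, since $\tilde{\p}_0$ has residue degree $1$, reduction gives an isomorphism $\OO_K/\tilde{\p}_0^2\xrightarrow{\sim}\Z_p/p^2\Z_p$, so $u^{p-1}\equiv 1\pmod{p^2}$ translates back to $\alpha^{p-1}\equiv 1\pmod{\tilde{\p}_0^2}$, closing the chain.

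The computation itself is a one-line manipulation of the logarithm and binomial series modulo $p^2$, so I do not expect the difficulty to be analytic. The only point requiring care is bookkeeping about embeddings: the log-condition appearing in Lemma \ref{p-adic log} is phrased intrinsically, whereas Gold's congruence lives at the prime $\tilde{\p}_0$ at which $\alpha$ is a unit, and one must make sure to evaluate $\log_p(\alpha)$ at that prime (equivalently, to invoke the stated embedding-independence) and to identify $\tilde{\p}_0^2$ with $p^2\Z_p$ under the residue-degree-one completion. Once these identifications are fixed, the three equivalences assemble immediately into the desired chain.
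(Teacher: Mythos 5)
Your proposal is correct and follows essentially the same route as the paper: both reduce the statement to the final local equivalence (the first two links being Theorem \ref{main1} with $n=2$ and Lemma \ref{p-adic log}), complete $K$ at $\tilde{\p}_0\cong\Q_p$ where $\alpha$ is a unit, and use the decomposition $\Z_p^*\cong\F_p^*\oplus(1+p\Z_p)$. Your write-up merely makes explicit the series computations $\log_p(1+pt)\equiv pt$ and $(1+pt)^{p-1}\equiv 1-pt \pmod{p^2}$ that the paper leaves as an asserted chain of equivalences, together with the embedding bookkeeping the paper handles via the remark after Lemma \ref{p-adic log}.
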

 \begin{proof}
    We only need to prove the last equivalent relation. Complete $K$ at $\Tilde{\p}_0$, We have $K_{\Tilde{\p}_0}\cong \Q_p$ and $\alpha \in \Z_p^*\cong \F_p^*\oplus 1+p\Z_p$. And $\log_p(\alpha)\equiv0 \bmod{p^2}\Longleftrightarrow \alpha \in \F_p^*\oplus 1+p^2\Z_p \Longleftrightarrow \alpha^{p-1}\equiv 1\mod{p^2}$ in $\Q_p \Longleftrightarrow \alpha^{p-1}\equiv 1\mod{\Tilde{\p}_0^2}$ in $K$.
 \end{proof}
Let  $\Bar{\rho}_n: G_{K,S}\rightarrow \Bar{U}_{n+1}$ be a proper defining system. 
\[\Bar{\rho}_n=\begin{bmatrix}  
    1 & \chi &\binom{\chi}{2}&\binom{\chi}{3}&\binom{\chi}{4}&\cdots & *\\ 
    0 &1& \chi &\binom{\chi}{2}&\binom{\chi}{3}&\cdots & \psi_{n-2}\\
    \vdots& \vdots& \vdots& \vdots& \vdots& \ddots& \vdots\\
   0&0&0& 1&\chi&\binom{\chi}{2}&\psi_2\\
   0&0&0&0&1&\chi&\psi_1\\
   0&0&0&0&0&1&\psi_0\\
   0&0&0&0&0&0&1
    \end{bmatrix}
\]
When restricted on $G_{K_1,S}=\ker\chi$, the cochain $\psi_i\in \mathcal{C}^1(G_{K,S},\mu_p)$ becomes a cocycle in $\mathcal{C}^1(G_{K_1,S},\mu_p)$. So $ Res_{G_{K_1,S}} \psi_i$ corresponds to a cocycle $(\sigma\rightarrow\sigma(A_i^{1/p})/A_i^{1/p})$ for some $A_i\in K_1^*\cap K_S^{*p}$ and the cocycle depends on the choice of $ A_i^{1/p}$ since we do not have $\mu_p\in K$. Assume that the Massey product relative to the proper defining system $\Bar{\rho}_n: G_{K,S}\rightarrow \Bar{U}_{n+1}$ vanishes, i.e. there exists a cochain $\psi_{n-1}\in \mathcal{C}^1(G_{K,S},\mu_p)$ fitting in the lifting $ \rho_n:G_{K,S}\rightarrow U_{n+1}$. 

\[\rho_n=\begin{bmatrix}  
    1 & \chi &\binom{\chi}{2}&\binom{\chi}{3}&\binom{\chi}{4}&\cdots & \psi_{n-1}\\ 
    0 &1& \chi &\binom{\chi}{2}&\binom{\chi}{3}&\cdots & \psi_{n-2}\\
    \vdots& \vdots& \vdots& \vdots& \vdots& \ddots& \vdots\\
   0&0&0& 1&\chi&\binom{\chi}{2}&\psi_2\\
   0&0&0&0&1&\chi&\psi_1\\
   0&0&0&0&0&1&\psi_0\\
   0&0&0&0&0&0&1
    \end{bmatrix}
\]
Similarly, $Res_{G_{K_1,S}}\psi_{n-1}$ is a cocycle and corresponds to a element $A_{n-1}\in K_1^*\cap K_S^{*p}$. Let $\Bar{\rho}_n':G_K\rightarrow G_{K,S}\xrightarrow{\Bar{\rho}_n}\Bar{U}_{n+1}$ be the composition of $G_K\rightarrow G_{K,S} $ and $\Bar{\rho}_n$. Then $\Bar{\rho}_n'$ is a proper defining system over $G_K$. Then the Massey product relative to $\Bar{\rho}_n'$ also vanishes. Then there exists a cochain $\psi_{n-1}':G_K\rightarrow \mu_p$ fitting in the cochain $ \rho_n':G_K\rightarrow U_{n+1}$.

\[\rho_n'=\begin{bmatrix}  
    1 & \chi &\binom{\chi}{2}&\binom{\chi}{3}&\binom{\chi}{4}&\cdots & \psi_{n-1}'\\ 
    0 &1& \chi &\binom{\chi}{2}&\binom{\chi}{3}&\cdots & \psi_{n-2}\\
    \vdots& \vdots& \vdots& \vdots& \vdots& \ddots& \vdots\\
   0&0&0& 1&\chi&\binom{\chi}{2}&\psi_2\\
   0&0&0&0&1&\chi&\psi_1\\
   0&0&0&0&0&1&\psi_0\\
   0&0&0&0&0&0&1
    \end{bmatrix}
\]
And $Res_{G_{K_1}}\psi_{n-1}'$ is a cocycle in $\mathcal{C}^1(G_{K_1},\mu_p)$ and corresponds to $A_{n-1}'\in K_1^*$. By definition of Massey product, we have $d(\psi_{n-1}'-\psi_{n-1})=0$. Hence $\psi_{n-1}'-\psi_{n-1} $ is a cocycle in $\mathcal{C}^1(G_{K},\mu_p)$ which corresponds to $ \sigma\rightarrow \frac{\sigma(f^{1/p})}{f^{1/p}}$ for some $ f\in K^*$. When restricting on $G_{K_1}$, we have $fA_{n-1}=A_{n-1}'$. We remark here that we have to choose the $p$-th root of $A_{n-1},A_{n-1}',f$ properly so that cocycles are compatible. A different choice of $p$-th root of $A_i$ changes the corresponding cocycle a multiple of $(\sigma\rightarrow\frac{\sigma(\zeta_p)}{\zeta_p})$. And $\chi\cup (\sigma\rightarrow\frac{\sigma(\zeta_p)}{\zeta_p})=0 $. For our case now, we care about when the Massey products vanish. Therefore, we do not need to care too much about the choice of the $p$-th root of the element. For our purpose, the key is that there exists $f\in K^*$ such that $fA_{n-1}=A_{n-1}'$ where $A_{n-1}\in K_1^*\cap K_S^{*p}$ and $A_{n-1}'\in K_1^*$. By lemma \ref{unramified extension}, we have an element $A\in  K_1^*\cap K_S^{*p}$ if and only if the valuation $v_{\p}(A)\equiv0\mod{p}$ where $\p$ does not divide $p$.

Now we combine all we have and explain how our numerical criterion works:
\begin{enumerate}
    \item The Iwasawa invariant $\lambda\geq1$, always true.
    \item The Iwasawa invariant $\lambda \geq 2$ $\Leftrightarrow \log_p\alpha \equiv 0 \mod{p^2}$, where $\log_p$ is the $p$-adic log (Reason: lemma \ref{p-adic log}).

   \item  Assume $\lambda\geq 2$ is true, then $ \chi\cup \alpha=0\Longleftrightarrow \exists \beta \in K^*_1$ s.t. $\Nrm_{K_1/K}(\beta)=\alpha$. Define $A_1'=\prod_{i=0}^{p-1}\sigma^i(\beta^i)\in K_1^*$ , where $\sigma$ is the generator of the group $G/N=\Gal(K_1/K)\cong \F_p$

   Claim: There exists $ \alpha_1 \in K^*$ s.t. $ v_{\p}(\alpha_1 A_1')\equiv 0 \mod{ p} $ for all $ \p\nmid p$, where $ v_{\p}$ is the valuation corresponding to prime ideal $\p$. (Reason: by previous argument, the difference between "correct" $A_1$ and $A_1'$ we constructed is an element in $\alpha_1\in K^*$. We want $\alpha_1 A_1'$ to be our $A_1$.)

   Then $\lambda \geq 3\Longleftrightarrow \chi\cup \alpha_1=0\Longleftrightarrow 
   \log_p \alpha_1 \equiv 0 \mod{p^2}$(Reason: Let $\psi_1',\psi_2'$ be the cochain in $\mathcal{C}^1(G_K,\mu_p)$ corresponding to $A_1'$ and $A_2'=\prod_{i=0}^{p-1}\sigma^{i}(\beta^{i(i-1)/2})$. Over $G_K$, we have $ \chi\cup\psi_1'+\binom{\chi}{2}\cup \psi_0=-d\psi_2'$. Restrict on $G_{K_{\p_0}}$ through $ G_K\rightarrow G_{K_{\p_0}}$, we have $\chi\cup\psi_1'\mid_{G_{K_{\p_0}}}+\binom{\chi}{2}\cup \psi_0\mid_{G_{K_{\p_0}}}=-d\psi_2'\mid_{G_{K_{\p_0}}}$. Let $ \psi_1\in \mathcal{C}^1(G_{K,S},\mu_p)$ correspond to the $A_1$.  Restrict to $G_{K_{\p_0}}$, the Massey product $\chi\cup\psi_1+\binom{\chi}{2}\cup \psi_0=\chi\cup\psi_1'+\binom{\chi}{2}\cup \psi_0+\chi\cup(\psi_1-\psi_1')$ vanishes in $H^2(G_{K_{\p_0}},\mu_p)$ if and only if $ \chi\cup(\psi_1-\psi_1')\mid_{G_{K_{\p_0}}}=0$,i.e. $\chi\cup \alpha_1=0$ in $H^2(G_{K_{\p_0}},\mu_p)$. A similar argument as lemma \ref{p-adic log}, we have the Massey product vanishing if and only if $\log_p(\alpha_1)\equiv0\mod{p^2}$. Notice that we can not directly use lemma \ref{p-adic log} since $\alpha_1$ may not be in $H^1(G_{K,S},\mu_p)$. However, since we have restricted $\psi_1-\psi_1'$ on $G_{K_{\p_0}}$, we can directly work in  $H^2(G_{K_{\p_0}},\mu_p)$. And similarly, restricting on $G_{K_{\Tilde{\p}_0}}$, we get the same result.  Since $H^2(G_{K,S},\mu_p)\rightarrow H^2(G_{K_{\p_0}},\mu_p)\oplus H^2(G_{K_{\Tilde{\p}_0}},\mu_p)$ is injective, we conclude the Massey product vanishing if and only if $\log_p(\alpha)\equiv 0\mod{p^2}$.)

   \item Assume $\lambda \geq 3$, then $\chi \cup \alpha_1=0\Longleftrightarrow \exists \beta_1 \in K^*_1$ s.t. $\Nrm_{K_1/K}(\beta_1)=\alpha_1$, (Reason: the cup product $\chi \cup \alpha_1$ should be viewed in $H^2(G_K,\mu_p)$). Define $A_2'=\prod_{i=0}^{p-1}\sigma^{i}(\beta^{i(i-1)/2})\in K_1^*$ and $B_1'=\prod_{i=0}^{p-1}\sigma^i(\beta_1^i)\in K_1^*$. (Reason: we want the Massey product vanishing on $G_K$ first. So we construct $A_2'$ and $B_1'$ correspond to the cocycle in this way. See lemma \ref{standard} and lemma \ref{str of proper}.)

   Claim: There exists $ \alpha_2 \in K^*$ s.t. $ v_{\p}(\alpha_2 A_2'B_1')\equiv 0 \mod{ p} $ for all $ \p\nmid p$.(Reason: similarly to the previous case, the difference between "correct" $A_2$ and $A_2'B_1'$ is an element $\alpha_2\in K^*$.)

   Then $\lambda \geq 4\Longleftrightarrow \chi\cup\alpha_2=0 \Longleftrightarrow \log_p \alpha_2 \equiv 0 \mod{p^2}$ .(Reason: similar as previous case.)

   \item continues in a similar way $\cdots$

\end{enumerate}
\begin{remark}
    The numerical criterion is not perfect, we only know the existence of $\beta_i$ and $\alpha_1$ and we do not have a logarithm to compute them.

\end{remark}

I hope the following lemma can inspire people to come up with explicit numerical criterion though I can not do it. 
\begin{lemma}
    Let $K$ be a imaginary quadratic field, $p\nmid h_K$, $p$ split in $K$ i.e. $\p\OO_K=\p_0\Tilde{\p_0}$. Let $\Bar{\rho}_n: G_{K,S}\rightarrow \Bar{U}_{n+1}$ be a defining system. Then the Massey product $(\chi_1,\chi_2,\cdots,\chi_n)_{\Bar{\rho}_n}$ relative to a defining system $\Bar{\rho}_n$ vanishes over $G_{K,S}$ if and only if it vanishes over $G_K$
\end{lemma}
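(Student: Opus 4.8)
The plan is to realize $G_K=\Gal(K^{sep}/K)$ as sitting over the restricted group and to transport the Massey product along inflation. Write $\pi\colon G_K\twoheadrightarrow G_{K,S}=\Gal(K_S/K)$ for the natural surjection; since $K(\mu_p)\subset K_S$, the module $\mu_p$ is genuinely a $G_{K,S}$-module and $\pi$ induces inflation maps $\mathrm{inf}\colon H^r(G_{K,S},\mu_p)\to H^r(G_K,\mu_p)$. Composing the given defining system with $\pi$ yields a defining system $\Bar{\rho}_n\circ\pi\colon G_K\to\Bar{U}_{n+1}$ whose entries are the inflated cochains $\chi_i\circ\pi$. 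Because the Massey product class is computed from the cocycle $\sum_{j}\rho_{1,j}\cup\rho_{j,n+1}$ and cup products commute with inflation, the Massey product over $G_K$ relative to $\Bar{\rho}_n\circ\pi$ is exactly $\mathrm{inf}$ of the Massey product over $G_{K,S}$ relative to $\Bar{\rho}_n$. Thus the lemma reduces to the single assertion that a class $c\in H^2(G_{K,S},\mu_p)$ vanishes if and only if $\mathrm{inf}(c)$ vanishes in $H^2(G_K,\mu_p)$, i.e. that $\mathrm{inf}$ is injective.

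The forward implication is immediate, since $\mathrm{inf}(0)=0$. For the converse I would reuse the localization argument already deployed in Lemma \ref{p-adic log}. Fixing embeddings $\bar K\hookrightarrow\bar K_{\p_0}$ and $\bar K\hookrightarrow\bar K_{\tilde{\p}_0}$, the resulting decomposition groups give localization maps fitting into a commutative triangle
\[
\begin{tikzcd}
H^2(G_{K,S},\mu_p)\arrow[r,"\mathrm{inf}"]\arrow[dr,"\mathrm{loc}"'] & H^2(G_K,\mu_p)\arrow[d,"\mathrm{loc}"]\\
& H^2(G_{K_{\p_0}},\mu_p)\oplus H^2(G_{K_{\tilde{\p}_0}},\mu_p)
\end{tikzcd}
\]
The triangle commutes because the composite $G_{K_{\p_0}}\hookrightarrow G_K\xrightarrow{\pi}G_{K,S}$ is precisely the localization $G_{K_{\p_0}}\to G_{K,S}$ (valid as $\p_0\mid p$, so ramification at $\p_0$ is permitted inside $K_S$), and likewise at $\tilde{\p}_0$. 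Now if $\mathrm{inf}(c)=0$, then both of its local images vanish, so by commutativity the diagonal localization of $c$ vanishes as well.

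The key input is that this diagonal localization map is injective, which is exactly what Lemma \ref{p-adic log} establishes: taking $A=\mu_p$ in Poitou--Tate duality and using $p\nmid h_K$ gives $\Sh^2(G_{K,S},\mu_p)\cong\Sh^1(G_{K,S},\F_p)^{\vee}\cong\Hom(\Cl_S(K),\F_p)^{\vee}=0$, so the map to the two local $H^2$'s has trivial kernel. Hence $c=0$, which completes the converse and the proof. The only point requiring genuine care is the compatibility of the localization maps with inflation, but this is pure functoriality of restriction to decomposition subgroups; once it is in place the statement is a formal consequence of $\Sh^2=0$. I would also remark that the argument proves the sharper fact that $\mathrm{inf}\colon H^2(G_{K,S},\mu_p)\to H^2(G_K,\mu_p)$ is injective, and it is this injectivity that legitimizes the passage to $G_K$ used throughout the numerical criterion.
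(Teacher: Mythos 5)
Your proof is correct and takes essentially the same route as the paper: both arguments chase the same commutative triangle relating the inflation $H^2(G_{K,S},\mu_p)\to H^2(G_K,\mu_p)$ to the localization maps at $\p_0$ and $\tilde{\p}_0$, with the key input being $\Sh^2(G_{K,S},\mu_p)=0$ from Poitou--Tate duality and $p\nmid h_K$ (as in the paper's Lemma on the $p$-adic logarithm criterion). The only difference is expository: you spell out the two facts the paper leaves implicit, namely that the Massey product over $G_K$ relative to $\Bar{\rho}_n\circ\pi$ is the inflation of the one over $G_{K,S}$, and that the argument yields the sharper statement that inflation is injective on $H^2$.
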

\begin{proof}
We have the following commutative diagram.
 \[
\begin{tikzcd}
G_{K_{\p_0}}\arrow[r,hook] \arrow[dr,hook] & G_K \arrow[d,two heads]  \\
&  G_{K,S}
\end{tikzcd}
\]  
It induces the following diagram.
 \[
\begin{tikzcd}
H^2(G_{K,S},\mu_p)\arrow[r,hook] \arrow[d]  &H^2(G_{K_{\p_0}},\mu_p)\oplus H^2(G_{K_{\Tilde{\p}_0}},\mu_p)   \\
 H^2(G_K,\mu_p)\arrow[ur]&
\end{tikzcd}
\] 
The row is an injective map by Poitou-Tate Duality (See proof of lemma \ref{p-adic log}). The column is an inflation map. If the Massey product vanishes over $G_{K,S}$, then after the inflation map, it vanishes over $G_K$. If the Massey product vanishes over $G_K$, we restrict on local field, then it vanishes over $G_{K_{\p_0}}$ and $G_{K_{\Tilde{\p}_0}}$. Since the row map is injective, it vanishes in $G_{K,S}$.
\end{proof}
\begin{remark}
   By the following well-known exact sequence:
\[ H^1(G_K,\mu_p)\xrightarrow{\chi\cup-} H^2(G_K,\mu_p)\xrightarrow{Res} H^2(G_{K_1},\mu_p)\]
The Massey product $\sum_{i=0}^{n-1}\binom{\chi}{i}\cup\psi_i $ in our case will be zero when restrict on $ G_{K_1}$. Therefore, there exists $\chi_b\in H^1(G_K,\mu_p)$ such that $\chi\cup\chi_b=\sum_{i=0}^{n-1}\binom{\chi}{i}\cup\psi_i $. 

\end{remark}

\subsubsection{case 2}
Let $K$ be an imaginary quadratic field and $\Cl(K)[p^\infty]=\Z/p^l\Z $. Assume $p$ remains prime over $K/\Q$, i.e. $p\OO_{K}=\p_0$. Then there is only one prime that is ramified in the $\Z_p$ cyclotomic extension $K\subset K_1\subset K_2\subset \cdots \subset K_\infty$. Let $I$ be an ideal in $K$ such that $[I]\neq 0\in \Cl(K)[p]$. Let $\alpha$ be the generator of the principal ideal $I^p$. 

\begin{theorem}\label{maincase3}
    Let $K$ be an imaginary quadratic field and $\Cl(K)[p^\infty]=\Z/p^n\Z $. Assume $p$ remains prime over $K/\Q$  and $n\geq 2$. Assume $\lambda\geq n-1$, then $\lambda\geq n\Leftrightarrow$ $n$-fold Massey product $(\chi,\chi,\cdots\chi,\alpha$ is zero with respect to a proper defining system.
\end{theorem}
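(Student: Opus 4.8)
The plan is to reproduce the four-step architecture of the proof of Theorem \ref{main1}, with the modifications forced by the inert hypothesis (under which $\#S=1$). \emph{Step 1: the module $\varprojlim D_l$.} I would first prove $\lambda=\lambda_{cs}$ by showing $D_l=0$ for every $l$. Since $p$ is inert in $K/\Q$ and the cyclotomic $\Z_p$-extension is totally ramified at $p$, each $K_l$ has a single prime $\p_l$ above $p$. Complex conjugation permutes the primes over $p$ and hence fixes $\p_l$, so $[\p_l]$ is fixed by conjugation and lies in $\Cl(K_l)[p^\infty]^+$. Now $K_l^+=\Q_l$, and the classical vanishing of the Iwasawa invariants of $\Q$ in its cyclotomic $\Z_p$-tower gives $\Cl(\Q_l)[p^\infty]=0$; combined with the isomorphism $\Cl(K_l)[p^\infty]^+\cong\Cl(\Q_l)[p^\infty]$ valid for odd $p$, this yields $\Cl(K_l)[p^\infty]^+=0$. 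Hence $[\p_l]=0$, so $D_l=0$, giving $\Cl_S(K_l)=\Cl(K_l)$ on $p$-parts, $X=X_{cs}$, and $\lambda=\lambda_{cs}$.

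\emph{Step 2: apply the main theorem.} Because $\#S=1$, we have $Br(\OO_K[1/p])[p]\cong\F_p^{\#S-1}=0$, so \eqref{2} gives $H^2(G_{K,S},\mu_p)\cong\Cl_S(K)/p$; since $\p_0=(p)$ is principal we have $\Cl_S(K)=\Cl(K)$, whose $p$-part is cyclic, so $H^2(G_{K,S},\mu_p)\cong\F_p$. The module $X_{cs}=X_{cs}^-$ has no finite submodule by Theorem \ref{no finite}, and $\mu_{cs}=0$ by Ferrero--Washington. Theorem \ref{main theom} then gives $\lambda_{cs}=\min\{n\mid\Psi^{(n)}\neq0\}-\#S+1=\min\{n\mid\Psi^{(n)}\neq0\}$, so $\lambda=\min\{n\mid\Psi^{(n)}\neq0\}$ and, granting $\lambda\geq n-1$, we obtain $\lambda\geq n\Leftrightarrow\Psi^{(n-1)}=0$.

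\emph{Step 3: isolating $\alpha$.} By \eqref{1}, $H^1(G_{K,S},\mu_p)$ fits in $0\to\OO_{K,S}^*/p\to H^1(G_{K,S},\mu_p)\to\Cl_S(K)[p]\to0$. For an imaginary quadratic $K$ with $p$ odd and inert, $p$ does not divide the order of the group $\mu_K$ of roots of unity of $K$, so $\OO_{K,S}^*/p$ is one-dimensional, spanned by the class of $p$ (as $\p_0=(p)$), while $\Cl_S(K)[p]=\F_p$ is generated by $[I]$, the image of $\alpha$. Thus $[p]$ and $[\alpha]$ span $H^1(G_{K,S},\mu_p)$, and by Theorem \ref{reduce} the image $\Ima\Psi^{(n-1)}$ is spanned by $\Psi^{(n-1)}([p])$ and $\Psi^{(n-1)}([\alpha])$. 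Exactly as in the proof of Theorem \ref{main1}, the norm-compatible cyclotomic system $\eta_l$ satisfies $p=\Nrm_{K_l/K}(\eta_l)$, so by Theorem \ref{cv in NT} the class of $p$ has the $p$-cyclic Massey product vanishing property and $\Psi^{(n-1)}([p])=0$. Hence $\Ima\Psi^{(n-1)}$ is generated by $\Psi^{(n-1)}([\alpha])$, which by Theorem \ref{LLSWW} equals the $n$-fold Massey product $(\chi,\dots,\chi,\alpha)$ relative to a proper defining system. Combining the steps, $\lambda\geq n\Leftrightarrow\Psi^{(n-1)}=0\Leftrightarrow\Psi^{(n-1)}([\alpha])=0\Leftrightarrow(\chi,\dots,\chi,\alpha)=0$.

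The main obstacle is Step 1: establishing $\Cl(K_l)[p^\infty]^+=0$, equivalently $\varprojlim D_l=0$. This is the structural point distinguishing the inert case from the split case \ref{case2}, and it relies on feeding in the triviality of the cyclotomic Iwasawa theory of $\Q$ through the identity $K_l^+=\Q_l$. The remaining steps are formally identical to the proof of Theorem \ref{main1}; it is worth noting that both cases collapse to the uniform statement $\lambda=\min\{n\mid\Psi^{(n)}\neq0\}$, the rank $\#S-1$ of $\varprojlim D_l$ exactly cancelling the $-\#S+1$ in the formula for $\lambda_{cs}$.
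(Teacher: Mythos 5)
Your proposal is correct and follows essentially the same route as the paper: you prove $D_l=0$ via $\Cl(K_l)[p^\infty]^+=0$ (your hands-on derivation through $K_l^+=\Q_l$ and $\Cl(\Q_l)[p^\infty]=0$ is precisely what the paper's citation of Proposition 13.22 in \cite{MR1421575} encapsulates), then apply Theorem \ref{main theom} with $\#S=1$ and isolate $\alpha$ by showing that $p$ has the $p$ cyclic Massey product vanishing property via the norm-compatible system $\eta_l$, exactly as in Theorem \ref{main1}. Your explicit appeals to Ferrero--Washington for $\mu_{cs}=0$ and to Theorem \ref{no finite} for the torsion-freeness hypothesis simply make precise what the paper compresses into ``our case satisfies the conditions of Theorem \ref{main theom}.''
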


\begin{remark}
    It is well known that $\lambda\geq 1$ is always true in the case.
\end{remark}

\begin{proof}[proof of theorem \ref{maincase3}]
\hfill

In this case, we have $\Cl(K_l)[p^\infty]^+=0$ by Proposition 13.22 in \cite{MR1421575}. We have $D_l$ is a subgroup of $\Cl(K_l)[p^\infty]$ generated by $\p_l$. Hence $D_l=D_l^+=0$ by definition. Therefore, we have $\Cl_S(K_l)[p^\infty]=\Cl(K_l)[p^\infty]=\Cl(K_l)[p^\infty]^-$. Our case satisfies conditions in lemma \ref{no finite mudule}. We have $X\cong X_{cs}$ and $\lambda=\lambda_{cs}$. 

By the exact sequence \eqref{2},  
\[
 0\rightarrow \Cl_S(K)/p =\F_p \rightarrow H^2(G_{K,S},\mu_p)\rightarrow Br(\OO_K[1/p])[p]=0\rightarrow 0 
\]
We have $H^2(G_{K,S},\mu_p)=\F_p$. The theorem \ref{main theom} implies \[\lambda=\lambda_{cs}=\min\{n|\Psi^{(n)}\neq 0\}-1+1=\min\{n|\Psi^{(n)}\neq 0\}.\]
By the exact sequence \eqref{1}, 
\[
 0\rightarrow\OO_{K,S}^*/p\cong 
 F_p\rightarrow H^1(G_{K,S},\mu_p)\rightarrow \Cl_S(K)[p]\cong \F_p\rightarrow 0 
\]
we know that $H^1(G_{K,S},\mu_p)$ is generated by $p$ and $\alpha$. A similar argument as in theorem \ref{main1}, we know $p$ has $p$ cyclic Massey product vanishing property. Similarly, the Iwasawa invariant $\lambda \geq n \Leftrightarrow \Psi^{(i)}=0$ for all $0\leq i< n $. Assume $\Psi^{(i)}=0 $ for all $ 0\leq i< n-1$, then by theorem \ref{reduce}, $\Ima\Psi^{(n-1)}$ is generated by $\Psi^{(n-1)}([\alpha])$. So $\Psi^{(n-1)}=0$ if and only if $\Psi^{(n-1)}([\alpha])=0 $ i.e. the Massey product $(\chi^{(n-1)},\alpha)$ relative to a proper defining system vanishes by theorem \ref{LLSWW}. 
\end{proof}
\subsubsection{case 3}
Let $K$ be an imaginary quadratic field and $\Cl(K)[p^\infty]=\Z/p^l\Z $. Assume $p$ is ramified in $K/\Q$, i.e. $p\OO_K=\p_0^2$. Then there is only one prime that is ramified in the $\Z_p$ cyclotomic extension $K\subset K_1\subset K_2\subset \cdots \subset K_\infty$. Let $I$ be an ideal in $K$ such that $[I]\neq 0\in \Cl(K)[p]$. Let $\alpha$ be the generator of the principal ideal $I^p$. 

\begin{theorem}\label{main case4}
   Let $K$ be an imaginary quadratic field and $\Cl(K)[p^\infty] $ be a cyclic group and $p$ is ramified in $K/\Q$ and $n\geq 2$. Assume $\lambda\geq n-1$, then $\lambda\geq n\Leftrightarrow$ $n$-fold Massey product $(\chi,\chi,\cdots\chi,\alpha)$ is zero with respect to a proper defining system. 
\end{theorem}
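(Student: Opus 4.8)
The plan is to follow the proof of Theorem~\ref{maincase3} (case 2) almost verbatim: the hypotheses differ only in the splitting behaviour of $p$, and both the inert case $p\OO_K=\p_0$ and the ramified case $p\OO_K=\p_0^2$ produce a single prime above $p$ in every layer $K_l$, which is what the whole argument relies on.

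First I would analyze $D_l$, the subgroup of $\Cl(K_l)[p^\infty]$ generated by the primes above $p$. Since $p$ is ramified in $K/\Q$ and totally ramified in the cyclotomic $\Z_p$-extension, there is a unique prime $\p_l$ of $K_l$ above $p$; being the unique such prime it is fixed by complex conjugation, so $D_l=D_l^+$. By Proposition~13.22 of \cite{MR1421575} we have $\Cl(K_l)[p^\infty]^+=0$ (equivalently $s=0$ in Theorem~\ref{inj map}), hence $D_l=D_l^+=0$ and $\Cl_S(K_l)[p^\infty]=\Cl(K_l)[p^\infty]=\Cl(K_l)[p^\infty]^-$. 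Passing to the inverse limit and using Lemma~\ref{no finite mudule} and Theorem~\ref{no finite} (so that $X$ and $X_{cs}$ contain no finite $\Lambda$-submodule) gives $X\cong X_{cs}$ and $\lambda=\lambda_{cs}$.

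Next I would feed in the cohomology. Because $\#S=1$ the Brauer term $Br(\OO_K[1/p])[p]$ vanishes, so the Kummer sequence \eqref{2} forces $H^2(G_{K,S},\mu_p)\cong \Cl_S(K)/p\cong \F_p$, using that $\Cl(K)[p^\infty]$ is cyclic and nontrivial. All hypotheses of Theorem~\ref{main theom} now hold, so $\lambda=\lambda_{cs}=\min\{n\mid \Psi^{(n)}\neq 0\}$; in particular, under the standing hypothesis $\lambda\geq n-1$, we have $\lambda\geq n$ if and only if $\Psi^{(n-1)}=0$. From sequence \eqref{1} together with $\Cl_S(K)[p]\cong\F_p$, the group $H^1(G_{K,S},\mu_p)$ is generated by the Kummer classes of $p$ and of $\alpha$, where $(\alpha)=I^p$. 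One point particular to the ramified case: even when $\p_0=(\pi)$ is principal, so that $p=u\pi^2$, the class of $p$ still generates $\OO_{K,S}^*/p\cong\F_p$ because $2$ is invertible mod $p$; thus $p$ and $\alpha$ still suffice as generators.

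Finally, exactly as in the proof of Theorem~\ref{main1}, the norm-compatible cyclotomic elements $\eta_l=\Nrm_{\Q(\mu_{p^{l+1}})/\Q_l}(1-\zeta_{p^{l+1}})$ satisfy $\Nrm_{K_l/K}(\eta_l)=p$, so $(\eta_l)\in\varprojlim\OO_{K_l,S}$ projects to $p$, and Theorem~\ref{cv in NT} shows $p$ has the $p$-cyclic Massey product vanishing property. Hence, assuming $\Psi^{(i)}=0$ for all $0\leq i<n-1$, Theorem~\ref{reduce} shows $\Ima\Psi^{(n-1)}$ is spanned by $\Psi^{(n-1)}([p])$ and $\Psi^{(n-1)}([\alpha])$, of which the first vanishes; so $\Psi^{(n-1)}=0$ if and only if $\Psi^{(n-1)}([\alpha])=0$, which by Theorem~\ref{LLSWW} is precisely the vanishing of the $n$-fold Massey product $(\chi,\dots,\chi,\alpha)=(\chi^{(n-1)},\alpha)$ relative to a proper defining system. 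The proof is thus essentially identical to case 2; the only genuinely new input is the local picture at $p$, and I expect the mild obstacle to be checking that the unique ramified prime $\p_l$ is conjugation-fixed (so that $D_l$ lands in the vanishing plus part) and that $p$ still generates $\OO_{K,S}^*/p$.
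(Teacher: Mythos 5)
Your proposal is correct and takes essentially the same route as the paper, whose proof of this theorem simply transfers the argument of Theorem \ref{maincase3} to the ramified case: $\Cl(K_l)[p^\infty]^+=0$ and $D_l=D_l^+=0$, hence $X\cong X_{cs}$ and $\lambda=\lambda_{cs}$; then $H^2(G_{K,S},\mu_p)\cong\F_p$ from the exact sequence \eqref{2} with vanishing Brauer term, Theorem \ref{main theom}, the $p$ cyclic Massey product vanishing property of $p$, Theorem \ref{reduce}, and Theorem \ref{LLSWW}. Your additional verifications --- that the unique prime $\p_l$ is fixed by complex conjugation so $D_l$ lies in the vanishing plus part, and that $p$ still generates $\OO_{K,S}^*/p$ in the ramified case because $p=u\pi^2$ with $2$ invertible modulo $p$ --- are details the paper leaves implicit, and you check them correctly.
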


\begin{remark}
    It is well known that $\lambda\geq 1$ is always true in the case.
\end{remark}

\begin{proof}[proof of theorem \ref{main case4} ]
\hfill

Similar as Theorem \ref{maincase3}, we have $\Cl(K_l)[p^\infty]^+=0$ and $D_l=D_l^+=0$ by definition. These imply $\Cl_S(K_l)[p^\infty]^- =\Cl(K_l)[p^\infty]^-=\Cl(K_l)[p^\infty]$. By the exact sequence \eqref{2},
\[
0\rightarrow \Cl_S(K)/p\cong \F_p  \rightarrow H^2(G_{K,S},\mu_p)\rightarrow Br(\OO_K[1/p])[p]=0\rightarrow 0 
\]
we have $H^2(G_{K,S},\mu_p)\cong \F_p$. Our case satisfies conditions in Theorem \ref{main theom}. The same argument as Theorem \ref{maincase3} gives us the conclusion.

\end{proof}

\subsection{Cyclotomic field}
 
Let $K=\Q(\mu_p)$ where $\mu_p$ is the group of $p$-th roots of unit as before.  Let $\omega: \Gal(\Q(\mu_p)/\Q)\cong (\Z/p\Z)^*\rightarrow \Z_p$ be the Teichm\"{u}ller character. Let $X$ be the Iwasawa module which is the inverse limit of the $p$-part of the class group of $ \Q(\mu_{p^l})$ with respect to the norm map. By Corollary 10.15 in \cite{MR1421575}, $\Gal(\Q(\mu_p)/\Q)$ acts on $X$ and $X$ is decomposed as direct sum of eigenspace, i.e. $X=\oplus_{i=0}^{p-2}\varepsilon_i X$ where $ \varepsilon_i=\frac{1}{p-1}\sum_{a=1}^{p-1}\omega^i(a)\sigma_a^{-1}\in \Z_p[\Gal(\Q(\mu_p)/\Q]$. 
Fix $i=3,5,\cdots, p-2$. Assume that $\varepsilon_i\Cl(K)[p^\infty]$ is cyclic, in other words, we have $\varepsilon_i X=\Lambda/(f_i)$. Notice that by Theorem 10.16 in \cite{MR1421575}, $\varepsilon_i\Cl(K)[p^\infty]$ is cyclic if Vandiver's conjecture holds. Let $\lambda_i= deg(f_i)$. Let $I_i$ be an ideal in $K$ such that $[I_i]\neq 0\in \varepsilon_i\Cl(K)[p]$. Let $\alpha_i$ be the lift of $[I_i]$ by the map $\varepsilon_iH^1(G_{K,S},\mu_p)\rightarrow \varepsilon_i\Cl(K)[p] $.

\begin{theorem}\label{main case5}
   Let $K=\Q(\mu_p)$. Fix $i=3,5,\cdots p-2$ and assume that $\varepsilon_i\Cl(K)[p^\infty]$ is cyclic. Let $n\geq 2$. Assume $\lambda_i\geq n-1$, then $\lambda_i\geq n\Leftrightarrow$ $n$-fold Massey product $\varepsilon_i(\chi,\chi,\cdots\chi,\alpha_i)$ is zero with respect to a proper defining system. 

\end{theorem}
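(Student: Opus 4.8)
The plan is to apply the equivariant main theorem, Theorem \ref{action main theom}, with $k=\Q$, $K=\Q(\mu_p)$, $\Delta=\Gal(K/\Q)\cong(\Z/p\Z)^\times$, $\G=G_{\Q,S}$, $G=G_{K,S}$, $N=G_{K_\infty,S}$ and $T=\mu_p$, where $K_\infty=\Q(\mu_{p^\infty})$ is the cyclotomic $\Z_p$-extension. Here $\Gal(K_\infty/\Q)\cong\Z_p^\times\cong\Delta\oplus\Z_p$ and $p\nmid\#\Delta$, so the hypotheses on $\G/N$ hold. Since $p$ is totally ramified in $K/\Q$ we have $\#S=1$, and the unique prime of $K_l=\Q(\mu_{p^{l+1}})$ above $p$ is generated by $1-\zeta_{p^{l+1}}$, hence principal, so the subgroup $D_l$ of Theorem \ref{inj map} vanishes. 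Consequently $\Cl_S(K_l)[p^\infty]=\Cl(K_l)[p^\infty]$ for all $l$, whence $X=X_{cs}$ and, on eigenspaces, $\varepsilon_i X=\varepsilon_i X_{cs}$ and $\lambda_i=\lambda_{i,cs}$.

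First I would verify the remaining hypotheses of Theorem \ref{action main theom} on the $i$th eigenspace. Because $\#S=1$, we have $Br(\OO_K[1/p])\cong(\Q/\Z)^{\#S-1}=0$, so the correction term $\dim_{\F_p}\varepsilon_i Br(\OO_K[1/p])[p]$ is zero. The exact sequence \eqref{2} then gives $H^2(G_{K,S},\mu_p)\cong\Cl_S(K)/p=\Cl(K)/p$, and applying $\varepsilon_i$ yields $\varepsilon_i H^2(G_{K,S},\mu_p)\cong\varepsilon_i\Cl(K)/p$. Since $\lambda_i\geq n-1\geq 1$, Nakayama forces $\varepsilon_i\Cl(K)[p^\infty]\neq 0$, and being cyclic it is $\Z/p^m\Z$ with $m\geq 1$, so $\varepsilon_i\Cl(K)/p\cong\F_p$, as required. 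For odd $i$ the eigenspace $\varepsilon_i X_{cs}$ lies in the minus part $X_{cs}^-$, which has no finite $\Lambda$-submodule by Theorem \ref{no finite}; together with $\mu_i=0$ (Ferrero--Washington) and cyclicity this shows $\varepsilon_i X_{cs}=\Lambda/(f_i)$ with $f_i$ distinguished, hence $\cong\Z_p^{\lambda_i}$ and $\Z_p$-torsion-free.

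Now Theorem \ref{action main theom} applies: since $\mu_{i,cs}=0$, we obtain $\lambda_i=\lambda_{i,cs}=\min\{m\mid\varepsilon_i\Psi^{(m)}\neq 0\}$, the Brauer term being zero. Thus, granting $\lambda_i\geq n-1$, i.e. $\varepsilon_i\Psi^{(m)}=0$ for all $m<n-1$, we have $\lambda_i\geq n$ if and only if $\varepsilon_i\Psi^{(n-1)}=0$. It remains to translate this vanishing into the Massey product statement, and the crucial input is that $\varepsilon_i H^1(G_{K,S},\mu_p)$ is one-dimensional, spanned by $\alpha_i$. Indeed, in \eqref{1} the $S$-unit group contributes nothing to the $i$th eigenspace for odd $i\neq 1$: up to the torsion $\mu_p$ (lying in the $\omega^1$-eigenspace) and the $S$-unit $1-\zeta_p$ (which modulo units is $\Delta$-invariant, hence lies in the $\omega^0$-eigenspace), the global units of $\Q(\mu_p)$ are real and so even. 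Therefore $\varepsilon_i\OO_{K,S}^*/p=0$ and \eqref{1} gives $\varepsilon_i H^1(G_{K,S},\mu_p)\cong\varepsilon_i\Cl(K)[p]\cong\F_p$, spanned by $\alpha_i$.

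Finally, under $\varepsilon_i\Psi^{(m)}=0$ for $m<n-1$, the equivariant form of Theorem \ref{reduce} shows $\varepsilon_i\Psi^{(n-1)}(f)$ depends only on the class of its constant term $\psi_0\in\varepsilon_i H^1(G_{K,S},\mu_p)=\F_p\,\alpha_i$; hence $\Ima\,\varepsilon_i\Psi^{(n-1)}$ is spanned by $\varepsilon_i\Psi^{(n-1)}([\alpha_i])$, which by Theorem \ref{LLSWW} is the Massey product $\varepsilon_i(\chi^{(n-1)},\alpha_i)$ relative to the proper defining system. Consequently $\varepsilon_i\Psi^{(n-1)}=0$ if and only if this $n$-fold Massey product $\varepsilon_i(\chi,\dots,\chi,\alpha_i)$ vanishes, completing the equivalence. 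I expect the main obstacle to be the eigenspace computation of the $S$-units yielding $\varepsilon_i\OO_{K,S}^*/p=0$ (which lets $\alpha_i$ alone generate the relevant $H^1$-eigenspace and so replaces the norm-compatibility argument for $p$ used in Theorem \ref{main1}), together with the bookkeeping needed to confirm $\varepsilon_i X_{cs}$ satisfies every hypothesis of Theorem \ref{action main theom}, in particular torsion-freeness via $\mu_i=0$.
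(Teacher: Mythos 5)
Your proposal is correct and follows essentially the same route as the paper: apply Theorem \ref{action main theom} with $k=\Q$, $\Delta=\Gal(\Q(\mu_p)/\Q)$, use $\Cl(K_l)=\Cl_S(K_l)$ and the eigenspace forms of the Kummer sequences \eqref{1} and \eqref{2} to get $\varepsilon_i H^2(G_{K,S},\mu_p)\cong\F_p$ and $\varepsilon_i H^1(G_{K,S},\mu_p)=\F_p\,\alpha_i$, then reduce via the equivariant Theorem \ref{reduce} and identify $\varepsilon_i\Psi^{(n-1)}([\alpha_i])$ with the Massey product by Theorem \ref{LLSWW}. Your extra verifications (torsion-freeness of $\varepsilon_i X_{cs}$ via Ferrero--Washington and Theorem \ref{no finite}, nontriviality of $\varepsilon_i\Cl(K)/p$ via Nakayama, and the eigenspace computation $\varepsilon_i\OO_{K,S}^*/p=0$) are sound and merely make explicit what the paper asserts.
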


\begin{proof}[proof of theorem \ref{main case5} ]
\hfill

Our case satisfies the setting up in Theorem \ref{action main theom}, where $k=\Q$, $K=\Q(\mu_p)$, $\Delta=\Gal(\Q(\mu_p)/\Q)$. We have $ \Cl(K_l)=\Cl_S(K_l)$. By the exact sequence:
\[
   0\rightarrow \varepsilon_i\Cl_S(K)/p =\F_p \rightarrow \varepsilon_i H^2(G_{K,S},\mu_p)\rightarrow \varepsilon_i Br(\OO_K[1/p])[p]=0\rightarrow 0 
\]
we have $\varepsilon_i H^2(G_{K,S},\mu_p)\cong \F_p$. By Theorem \ref{action main theom}, we have \[\lambda_i=\lambda_{i,cs}=\min\{n|\varepsilon_i\Psi^{(n)}\neq 0\}-dim_{\F_p}\varepsilon_i Br(\OO_{K}[1/p])[p]= \min\{n|\varepsilon_i\Psi^{(n)}\neq 0\}.\] 
By the exact sequence:
\[
 0\rightarrow\varepsilon_i\OO_{K,S}^*/p=0\rightarrow \varepsilon_iH^1(G_{K,S},\mu_p)\rightarrow \varepsilon_i\Cl(K)[p]=\F_p\rightarrow 0
\]
we know that $\varepsilon_iH^1(G_{K,S},\mu_p)$ is generated by $ \alpha_i$ by our definition. Assume $\lambda_i\geq n-1$, then $\Psi^{(j)}|_{\varepsilon_i H^2(G_{K,S},\Omega/I^n\otimes \mu_p)}=0 $ for $0\leq j<n-1$. So $\Ima\Psi^{(n-1)}|_{\varepsilon_i H^2(G_{K,S},\Omega/I^{n-1}\otimes \mu_p)}$ is generated by $\Psi^{(n-1)}|_{\varepsilon_i H^2(G_{K,S},\Omega/I^{n-1}\otimes \mu_p)}([\alpha_i])$. And $\Psi^{(n-1)}|_{\varepsilon_i H^2(G_{K,S},\Omega/I^{n-1}\otimes \mu_p)}([\alpha_i])=\varepsilon_i \Psi^{(n-1)}(f)$ for certain cocycle $f=\sum_{k=0}^{n-2}\psi_kx^k\in \mathcal{C}^1(G,\Omega/I^{n-1}\otimes \mu_p)$ such that $\psi_0=\alpha_i$. Hence $\varepsilon_i \Psi^{(n-1)}=0$ if and only if the $n$-fold Massey product $\varepsilon_i(\chi,\chi,\cdots\chi,\alpha_i)=0$ with respect to a proper defining system. 
\end{proof}
\begin{remark}
    This is a generalization of McCallum and Sharifi's result of proposition 4.2 in \cite{MR2019977}.
\end{remark}
\newpage

\bibliographystyle{alpha}
\bibliography{sample}

\end{document}